\documentclass[11pt]{amsart}
\usepackage{hyperref}
\usepackage{anysize} \marginsize{1.3in}{1.3in}{1in}{1in}
\usepackage{amsfonts,comment}

\usepackage[all,cmtip]{xy}
\usepackage{amsmath}
\usepackage{amsthm}
\usepackage{amssymb}
\usepackage{mathrsfs}
\usepackage{enumitem}
\usepackage{tikz-cd}
\usepackage{tikz}
\synctex=1

\usepackage[normalem]{ulem}

\newtheorem{thm}{Theorem}[section]

\newtheorem{prop}[thm]{Proposition}
\newtheorem{lemma}[thm]{Lemma}

\newtheorem{lem}[thm]{Lemma}
\newtheorem{cor}[thm]{Corollary}

\theoremstyle{definition}
\newtheorem{defn}[thm]{Definition}
\newtheorem*{definition*}         {Definition}

\newtheorem{eg}[thm]{Example}
\newtheorem{setting}[thm]{Setting}
\theoremstyle{remark}
\newtheorem*{claim}{Claim}
\newtheorem{remark}[thm]{Remark}

\renewcommand{\O}{\mathcal{O}}
\newcommand*{\Q}{\mathbb{Q}}

\newcommand*{\Z}{\mathbb{Z}}
\newcommand*{\G}{\mathbf{G}}

\newcommand*{\R}{\mathbb{R}}

\newcommand*{\C}{\mathbb{C}}

\newcommand*{\PP}{\mathbb{P}}

\newcommand*{\Hom}{\textrm{Hom}}
\DeclareMathOperator{\spec}{Spec}

\newcommand*{\ra}{\rightarrow}
\newcommand*{\rra}{\twoheadrightarrow}

\newcommand*{\ol}{\overline}

\renewcommand{\phi}{\varphi}

\def\Sym{\operatorname{Sym}}

\def\id{\operatorname{id}}

\def\Hom{\operatorname{Hom}}

\def\Nm{\operatorname{Nm}}

\def\Imag{\operatorname{Im}}

\def\Supp{\operatorname{Supp}}
\def\rk{\operatorname{rk}}

\def\Imag{\operatorname{Im}}

\def\colim{\operatorname{colim}}

\def\Coh{\operatorname{Coh}}
\def\Hilb{\operatorname{Hilb}}
\def\Module{\operatorname{Mod}}

\def\define{\mathrm{def}}
\def\reduced{\mathrm{red}}
\def\Define{\mathrm{Def}}
\def\An{\mathrm{An}}

\def\Coh{\mathrm{\mathbf{Coh}}}

\newcommand{\df}{\mathrm{def}}
\newcommand{\an}{\mathrm{an}}

\renewcommand{\bar}[1]{\overline{#1}}

%
\usepackage{amscd,amssymb,amsmath}
\usepackage{color}

\setlength{\marginparwidth}{1in}

\title{o-minimal GAGA and a conjecture of Griffiths} 

 \author[B. Bakker]{Benjamin Bakker}
\address{\noindent B. Bakker:  Dept. of Mathematics, Statistics, and Computer Science, University of Illinois at Chicago, Chicago, USA.}
\email{bakker.uic@gmail.com}

\author[Y. Brunebarbe]{Yohan Brunebarbe}
\address{\noindent Y. Brunebarbe:  Dept. of Mathematics, Univ. Bordeaux, Talence, France.}
\email{yohan.brunebarbe@math.u-bordeaux.fr}

\author[J. Tsimerman]{Jacob Tsimerman}
\address{\noindent J. Tsimerman:  Dept. of Mathematics, University of Toronto, Toronto, Canada.}
\email{jacobt@math.toronto.edu}

\begin{document}
\begin{abstract}We prove a conjecture of Griffiths on the quasi-projectivity of images of period maps using algebraization results arising from o-minimal geometry.  Specifically, we first develop a theory of analytic spaces and coherent sheaves that are definable with respect to a given o-minimal structure, and prove a GAGA-type theorem algebraizing definable coherent sheaves on complex algebraic spaces.  We then combine this with algebraization theorems of Artin to show that proper definable images of complex algebraic spaces are algebraic.  Applying this to period maps, we conclude that the images of period maps are quasi-projective and that the restriction of the Griffiths bundle is ample.   \end{abstract}
\maketitle

\section{Introduction}

Let $X$ be a smooth complex algebraic variety supporting a pure polarized integral variation of Hodge structures $(V_\Z,F^\bullet, Q)$.  Let $\Omega$ be the associated pure polarized period domain with generic Mumford--Tate group $\G$, and $\Gamma\subset \G(\Q)$ an arithmetic lattice containing the image of the monodromy representation of $V_\Z$.  There is a natural action of  $\Gamma$ on $\Omega$, and the quotient $\Gamma\backslash \Omega$ parameterizes pure Hodge structures up to integral framing in $\Gamma$.  Associated to a variation $(V_\Z,F^\bullet,Q)$ with monodromy in $\Gamma$ is a complex analytic period map $\phi:X^\an\to \Gamma\backslash \Omega$, where $X^\an$ is the analytification of $X$, that is, $X(\C)$ endowed with its natural structure as a complex analytic manifold.  The period map satisfies Griffiths transversality:  the derivative lands in a naturally defined distribution on $\Gamma\backslash \Omega$ (see \cite[pp.224-225]{Schmid}).  In general, for $X$ a reduced separated algebraic space of finite type over $\C$, we define a period map $\phi:X^\an\to\Gamma\backslash \Omega$ to be a complex analytic map which locally lifts to $\Omega$ and which satisfies Griffiths transversality on the regular locus of $X^\an$.  The main source of such period maps (and the variations of Hodge structures they entail) are local systems of singular cohomology groups of smooth projective families of algebraic varieties over $X$. 

The complex analytic variety $\Gamma\backslash \Omega$ itself rarely has an algebraic structure \cite{CT, GRT}; nonetheless, the closure of the image of a period map $\phi:X^\an\to\Gamma\backslash \Omega$ as above was conjectured by Griffiths \cite[p.259]{G1} to be a quasi-projective algebraic variety.  Griffiths' main motivation was the existence of a natural line bundle (which we call the Griffiths bundle) $L:=\bigotimes_i\det F^i$ which exists universally on $\Gamma\backslash \Omega$ as a $\Q$-bundle and has natural positivity properties in Griffiths transverse directions.  Aside from this, a strong piece of evidence for the conjecture is the result of Cattani--Deligne--Kaplan \cite{CDK} on the algebraicity of Hodge loci, which implies that the (reduced) analytic equivalence relation $X^\an\times_{\Gamma\backslash\Omega}X^\an\subset X^\an\times X^\an$ defining the image of $\phi$ set-theoretically is algebraic. 

Our main result is the following theorem, providing a solution to the conjecture:
\begin{thm}\label{maingriffiths}Let $X$ be a reduced separated algebraic space of finite type over $\C$ and $\phi:X^\an\to \Gamma\backslash \Omega$ a period map.  Then
\begin{enumerate}
\item $\phi$ factors (uniquely up to unique isomorphism) as $\phi=\iota\circ f^\an$ where $f:X\to Y$ is a dominant map of (reduced) finite-type algebraic spaces and $\iota:Y^\an\to \Gamma\backslash\Omega$ is a closed immersion of analytic spaces;
\item the Griffiths $\Q$-bundle $L$ restricted to $Y$ is the analytification of an ample algebraic $\Q$-bundle, and in particular $Y$ is a quasi-projective variety.

\end{enumerate}
\end{thm}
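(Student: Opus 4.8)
The plan is to prove Theorem~\ref{maingriffiths} by reducing it to two main ingredients: an \emph{o-minimal GAGA theorem} for definable coherent sheaves on complex algebraic spaces, and an \emph{algebraization theorem for proper definable images}, which in turn relies on Artin's theorem that an algebraic space is determined by a formally coherent functor on (henselizations of) algebraic schemes. First I would set up the definable category. The key structural input is that period maps, while transcendental, become \emph{definable} in the o-minimal structure $\R_{\mathrm{an},\exp}$ once one passes to a suitable fundamental domain: the quotient $\Gamma\backslash\Omega$ carries a canonical structure of a definable analytic space (this is essentially the content of the theory of period mappings and Siegel sets, extending the work on definability of period maps), and $\phi\colon X^{\an}\to\Gamma\backslash\Omega$ is a definable holomorphic map. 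The Stein factorization of $\phi$ in the definable analytic category produces $\phi = \iota\circ g$ where $g\colon X^{\an}\to Z$ is a proper surjective definable holomorphic map with connected fibers onto a definable analytic space $Z$, and $\iota\colon Z\to\Gamma\backslash\Omega$ is a definable closed immersion; this uses that proper definable images are definable analytic spaces (a Grauert-type finiteness/coherence result in the definable setting).

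Next I would invoke the o-minimal GAGA theorem to algebraize this Stein factorization. The point is that $Z$, being the definable proper image of the algebraic space $X$, together with $g_*\O_{X^{\an}}$ and its module structure, is encoded by a definable coherent sheaf of $\O_X$-algebras; by the GAGA-type theorem stated in the abstract, definable coherent sheaves on $X$ are algebraic, so $g_*\O_{X^{\an}}$ is the analytification of a coherent $\O_X$-algebra $\mathcal{A}$, and one takes $Y$ to be (an algebraic-space model built from) $\mathbf{Spec}_X\mathcal{A}$; the properness and finiteness give that $f\colon X\to Y$ is a proper dominant map of finite-type algebraic spaces with $f^{\an}$ realizing $g$, whence $Y^{\an}\cong Z$ and $\iota\colon Y^{\an}\to\Gamma\backslash\Omega$ is a closed immersion. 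Here the uniqueness of the factorization follows from the universal property of Stein factorization together with the faithfulness of analytification on finite-type algebraic spaces. This establishes part (1).

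For part (2), I would use the Griffiths transversality and curvature positivity of $L=\bigotimes^i\det F^i$. On $\Gamma\backslash\Omega$ the Griffiths $\Q$-line bundle carries a natural metric whose curvature is semipositive and is strictly positive in the horizontal (Griffiths-transverse) directions; since $\phi$ is horizontal and $\iota$ is a closed immersion with $f$ having connected fibers, $\iota^*L$ is a (semi)positive $\Q$-line bundle on $Y^{\an}$ that is positive transverse to the fibers of the identity — i.e. strictly positive on $Y^{\an}$ away from the locus where the period map is infinitesimally degenerate, which after the Stein factorization is all of $Y^{\an}$. By the o-minimal GAGA theorem again (applied to the definable line bundle $\iota^*L$, which is definable because $L$ is a definable $\Q$-bundle on the definable analytic space $\Gamma\backslash\Omega$), $\iota^*L$ is the analytification of an algebraic $\Q$-line bundle $L_Y$ on $Y$. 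Finally one upgrades ``positive metric'' to ``ample'': on a finite-type algebraic space, a $\Q$-line bundle admitting a smooth metric with strictly positive curvature, together with a definable/algebraic compactification argument (one needs that $Y$ admits a definable, hence algebraic, compactification on which the metric extends with suitable growth — this is where the o-minimal structure does real work, controlling the behavior at the boundary), is ample; hence $L_Y$ is ample and $Y$ is quasi-projective.

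The main obstacle I expect is precisely the passage from the differential-geometric positivity of the Griffiths bundle to algebraic ampleness, i.e. controlling the boundary behavior: one must produce a compactification $\bar Y$ of $Y$ — which will itself be definable, using the o-minimal structure, and therefore algebraic by GAGA — on which $\iota^*L$ extends to a nef (or big-and-nef) $\Q$-line bundle whose only non-ample locus is contained in the boundary, and then run a base-point-free/Nakai–Moishezon argument. Establishing the requisite estimates on the Griffiths metric near the boundary (analogous to the norm estimates for Hodge bundles of Cattani–Kaplan–Schmid, but now in a form compatible with definability) is the technical heart. A secondary difficulty is the careful development of the definable analytic category and the proof of the o-minimal GAGA theorem itself — in particular handling algebraic \emph{spaces} rather than schemes, which forces the use of Artin's algebraization/representability criteria rather than a direct Serre-style argument — but the excerpt allows me to take those foundational results as given.
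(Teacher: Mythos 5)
The reduction to a definable-proper-image theorem plus o-minimal GAGA is the right high-level shape, and your intuition about where the difficulty lies is sound, but the mechanism you propose for part (1) has a genuine gap. You take the definable Stein factorization $X^{\an}\xrightarrow{g}Z\xrightarrow{\iota}\Gamma\backslash\Omega$ and then try to build $Y$ as $\mathbf{Spec}_X\mathcal{A}$ where $\mathcal{A}$ algebraizes $g_*\O_{X^{\an}}$. This does not work on two counts. First, $g_*\O_{X^{\an}}$ is a sheaf on $Z$, not on $X$; indeed since $g$ has connected fibers, $g_*\O_{X^{\an}}\cong\O_Z$, so nothing new is being recorded, and there is no natural coherent $\O_X$-algebra encoding $Z$ — the candidate $g^{-1}\O_Z\subset\O_{X^{\an}}$ is not an $\O_{X^{\an}}$-module, so GAGA (which algebraizes definable \emph{coherent} sheaves on $X^{\df}$) does not apply to it. Second, even if one had a coherent $\O_X$-algebra, $\mathbf{Spec}_X$ produces a space \emph{mapping to} $X$ (affine, hence finite over $X$ if the algebra is coherent), not a proper-modification \emph{quotient of} $X$; relative Spec cannot contract positive-dimensional fibers. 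The real obstruction is that $Z$ is not a priori algebraic, and this is not resolved by GAGA on $X$ alone: the paper (Theorem~\ref{defpropmap}) first reduces to the case of a proper modification via Hilbert schemes, then algebraizes the formal neighborhood of the exceptional locus by an induction that repeatedly uses GAGA on nilpotent thickenings (Proposition~\ref{sqzerolift}), and finally invokes Artin's theorem on algebraization of \emph{formal modifications} \cite[Theorem 3.1]{A} — a different result from the Artin representability criterion you allude to. Without that inductive formal-modification step, there is no route from ``definable coherent sheaves on $X$ are algebraic'' to ``the definable image of $X$ is algebraic.''

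For part (2), your direction is reasonable (definable coherence of $L_Y$, positivity of the Griffiths metric, compactify and control boundary growth), but the assertion that a strictly positive metric on a singular, non-proper $Y$ together with a compactification gives ampleness glosses over the genuine technical content. The paper explicitly warns that $Y$ can be quite singular and the period map need not be immersive there; its argument instead passes to a log-smooth resolution where the canonical extension of $L$ is nef and big (Lemma~\ref{nef and big}), and then runs a delicate induction on $\dim Y$ using the subspace of sections that ``vanish at the boundary,'' Fujita vanishing, and a descent lemma for sections under a resolution, finally invoking Zariski's main theorem. Your outline omits this induction and the vanishing-at-the-boundary device, which are precisely what let one work around the singularities of $Y$ and the non-immersivity on the boundary divisor.
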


Note that if the period map $\phi$ is proper and if $X$ and the Griffiths bundle on $X$ are both defined over a subfield $k$ of $\C$ (for example, if the variation comes from a smooth projective family defined over $k$), then it follows that the first map $g:X\to Y'$ in the Stein factorization $X\xrightarrow{g}Y'\xrightarrow{h} Y$ of $f$ will also be defined over $k$, as this map is given by the complete linear system of sections of a high enough power of the natural extension of $L$ on a log smooth compactification of $X$ (see Theorem \ref{qproj}).

As a sample application, we have the following immediate corollary:

\begin{cor}\label{introcoarse} Let $\mathcal{M}$ be a reduced separated Deligne--Mumford stack of finite type over $\C$ admitting a quasi-finite period map.  Then the coarse moduli space of $\mathcal{M}$ is quasi-projective.
\end{cor}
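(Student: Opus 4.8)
The plan is to deduce this directly from Theorem \ref{maingriffiths} by reducing to the case of an algebraic space and tracking the coarse space through the resulting factorization. First I would pass to an étale atlas: choose a scheme (or algebraic space) $U$ with an étale surjection $U \to \mathcal{M}$ of finite type over $\C$. Composing with the given quasi-finite period map $\psi : \mathcal{M}^\an \to \Gamma\backslash\Omega$ yields a period map $\phi : U^\an \to \Gamma\backslash\Omega$ in the sense of the introduction (holomorphic, locally liftable, satisfying Griffiths transversality — these are étale-local conditions, so they are inherited from $\psi$). Since $\psi$ is quasi-finite and $U \to \mathcal{M}$ is étale, $\phi$ is quasi-finite as well.

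Next I would apply Theorem \ref{maingriffiths} to $U$ and $\phi$, obtaining a factorization $\phi = \iota \circ f^\an$ with $f : U \to Y$ a dominant map of reduced finite-type algebraic spaces, $\iota : Y^\an \to \Gamma\backslash\Omega$ a closed immersion, and $L|_Y$ the analytification of an ample algebraic $\Q$-bundle; in particular $Y$ is quasi-projective. Because $\phi$ is quasi-finite and $\iota$ is a monomorphism, $f$ is quasi-finite, and since it is also dominant onto the reduced space $Y$, it is generically finite; after shrinking we may even assume $f$ is finite onto its image, but what I really need is just that $\dim U = \dim Y$. The key structural point is that $Y$ receives a canonical map from the coarse moduli space $M$ of $\mathcal{M}$: the composite $U^\an \to \mathcal{M}^\an \xrightarrow{\psi} \Gamma\backslash\Omega$ is $U \to \mathcal{M}$-invariant by construction, hence $\iota \circ f^\an$ is as well, and since $\iota$ is a closed immersion (a monomorphism) $f^\an$ itself is $U \to \mathcal{M}$-invariant; as $U \to \mathcal{M}$ is an étale surjection and $Y$ is a separated algebraic space, $f$ descends to a morphism $\bar f : \mathcal{M} \to Y$, which in turn factors through the coarse space as $g : M \to Y$ by the universal property of $M$ (a separated algebraic space, hence in particular a scheme-like target with trivial automorphisms, so the coarse map is initial among such).

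Finally I would argue that $g : M \to Y$ is quasi-finite, whence $M$ is quasi-projective. Indeed $\mathcal{M} \to M$ is proper with finite fibers and $U \to \mathcal{M}$ is étale surjective, so quasi-finiteness of $g$ is equivalent to quasi-finiteness of $\bar f$, which is equivalent to quasi-finiteness of $f = \bar f \circ (U \to \mathcal{M})$ — and that we established above. A quasi-finite morphism to a quasi-projective variety $Y$ has quasi-projective source (compose a partial compactification / use Zariski's main theorem to factor $g$ as an open immersion into a finite $Y$-scheme, which is projective over $Y$ and hence quasi-projective), so $M$ is quasi-projective as claimed. The main obstacle I anticipate is not any single deep input but the bookkeeping of the descent step: verifying carefully that the period-map data on $U$ is genuinely pulled back from $\mathcal{M}$ so that $f^\an$ — and ultimately $f$ — descends, and checking that Theorem \ref{maingriffiths}'s $Y$ is separated enough for the coarse-space universal property to apply. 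Everything else is formal manipulation of quasi-finiteness and the cited theorem.
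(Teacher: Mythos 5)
Your proposal is essentially correct and follows the same overall plan as the paper's proof (which appears as Corollary \ref{coarse} in section \ref{sectionapp}): pass to an \'etale atlas $U$, invoke the algebraicity and ampleness results to get the quasi-projective period image $Y$, descend $U\to Y$ to a quasi-finite map $M\to Y$ from the coarse moduli space, and finish with Zariski's main theorem. The one place the two arguments diverge is precisely the step you flag as the anticipated ``obstacle'': the paper does not argue via the monomorphism $\iota$ directly, but instead passes to a normal neat finite-index $\Gamma'\subset\Gamma$ with quotient $G$, takes the associated level covers $U'\to U$ and $\mathcal{M}'\to\mathcal{M}$ and period image $Y'\subset\Gamma'\backslash\Omega$, observes that the variation on $U'$ is pulled back from the universal variation on $Y'$ (available because $\Gamma'$ is torsion-free) and hence that $U'\to Y'$ factors through $\mathcal{M}'$, and then descends through $G$-quotients. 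Your more direct route --- using that the period map on $U$ is $R$-equivariant for $R=U\times_{\mathcal{M}}U$ because it is pulled back along $U\to\mathcal{M}$, and that $\iota$ being a monomorphism forces $f$ to be $R$-equivariant as well --- is valid and arguably cleaner, but it implicitly relies on the same input that the paper makes explicit via $\Gamma'$ (namely that the $\Gamma$-orbit of the Hodge flag is constant along $R$, i.e.\ that the stacky monodromy lands in $\Gamma$). Either way the conclusion is the same, and the final Zariski's-main-theorem step is identical to the paper's.
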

The existence of the coarse moduli space is a general result of Keel--Mori \cite{KM}; see \S\ref{sectionapp} for a precise discussion of period maps on Deligne--Mumford stacks.  Corollary \ref{introcoarse} for instance will apply to a reduced separated Deligne--Mumford moduli stack of smooth polarized varieties with an infinitesimal Torelli theorem.  This provides an alternate approach to results of Viehweg \cite{Viehweg} on the quasi-projectivity of (normalizations of) coarse moduli spaces of smooth polarized varieties $X$ without assuming any positivity of $K_X$.  In particular, Corollary \ref{introcoarse} also applies to uniruled $X$ provided deformations can be detected by Hodge theory (for instance, low-degree complete intersections).

The strategy of the proof of Theorem \ref{maingriffiths} hinges on algebraization results in o-minimal geometry.  Briefly, an o-minimal structure specifies a class of ``tame" subsets of $\R^n$ with strong finiteness properties.  Such 
subsets are said to be definable with respect to the structure.  The resulting geometric category of complex analytic varieties that are pieced together by finitely many definable charts (which we call definable complex analytic varieties, see \S\ref{sectiondefscheme}) on the one hand allows 
some of the local  flexibility of the analytic category but on the other hand behaves globally like the algebraic category.  An excellent example of this is the celebrated ``definable Chow theorem" of Peterzil--Starchenko \cite[Corollary 4.5]{PSdefchow}, asserting that a 
closed complex analytic subvariety of a (not necessarily proper) complex algebraic variety which is definable in an o-minimal structure is in fact algebraic.

In \cite{BKT}, it is shown that $\Gamma\backslash \Omega$ is in this sense a definable complex analytic variety, and that period maps are definable with respect to this structure.  To prove the first part of Theorem \ref{maingriffiths}, we prove a ``dual" version of Peterzil--Starchenko's definable Chow theorem, showing that \emph{images} of algebraic spaces under definable proper complex analytic maps are algebraic: 
\begin{thm}\label{mainimage} Let $X$ be a separated algebraic space of finite type over $\C$, $\mathcal{S}$ a definable complex analytic space, and $\phi:X^\df\to \mathcal{S}$ a proper definable complex analytic map.  Then $\phi: X^\df\to \phi(X^\df)$ is (uniquely up to unique isomorphism) the definabilization of a morphism of algebraic spaces.
\end{thm}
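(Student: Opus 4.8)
The plan is to reduce to the case where $X$ is a scheme and $\phi$ is surjective with connected fibers, then build the algebraic structure on the image stratum-by-stratum using Artin's algebraization of formal modifications, with Theorem \ref{mainGAGA} supplying the coherent sheaves needed to feed into Artin's machine. First I would observe that the question is essentially about finding an algebraic structure on the topological space $|\phi(X^\df)|$ compatible with the definable analytic structure it inherits as a subspace of $\mathcal{S}$; by definable Chow (Peterzil--Starchenko) and the results of \cite{BKT} the image is a definable analytic space, and properness of $\phi$ guarantees it is a \emph{closed} definable analytic subspace, so the real content is algebraizing this closed definable analytic space together with the map from $X$. By taking a definable triangulation / stratification of $\phi(X^\df)$ into finitely many definable pieces, and noetherian induction, I can assume there is an open dense stratum $U\subseteq Y:=\phi(X^\df)$ over which the situation is already algebraic (by a prior inductive step or by generic smoothness plus definable Chow applied to graphs), and the task is to algebraize the closed immersion of the complementary stratum $Z\hookrightarrow Y$ into the already-constructed algebraic space.

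The heart of the argument is then the following local picture: one has an algebraic space $\bar Y$ (a partial compactification/completion built on the open part), a closed subspace, and a formal neighborhood of $Z$ in $Y^\df$ that one wants to recognize as the definabilization of a formal neighborhood in an algebraic space. Here is where Artin's theorem on algebraization of formal modifications (\cite{Artin2}) enters: a proper definable analytic map $\phi$ which is an isomorphism away from $Z$ over the dense stratum produces, after completing along the relevant closed subsets, a formal modification in Artin's sense; Artin's theorem then produces an algebraic modification realizing it, hence enlarges the algebraic structure to cover the next stratum. The coherent sheaves (ideal sheaves of the strata, pushforwards $\phi_*\O$, their thickenings) that must be compared on the algebraic and definable sides are handled by Theorem \ref{mainGAGA}: full faithfulness identifies morphisms of algebraic sheaves with morphisms of their definabilizations, and closure of the essential image under subobjects and quotients ensures that the ideal sheaves and structure sheaves arising on the definable side are genuinely algebraic, so that Artin's formal-algebraic input data is actually algebraic. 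Finally, uniqueness of the algebraic structure follows from definable GAGA-type rigidity: two algebraizations would differ by a definable analytic isomorphism of algebraic spaces compatible with the map from $X$, which by definable Chow (applied to the graph) is algebraic.

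The main obstacle I anticipate is the bookkeeping needed to make Artin's ``formal modification'' hypotheses literally apply at each inductive step — in particular, verifying that the formal completion of the definable image along a stratum, with its definable analytic structure, gives rise to exactly the formal data Artin needs, and that this data descends to be algebraic rather than merely formal-analytic. This is precisely the point where one cannot work reduced: the thickenings $Z^{(n)}$ of a stratum carry non-reduced structure, and one must algebraize the whole tower compatibly. Controlling these nilpotent thickenings in the definable category — ensuring the relevant sheaves of ideals and their powers remain in the essential image of $\Coh(\bar Y)\to\Coh(\bar Y^\df)$ — is the crux, and it is exactly what Theorem \ref{mainGAGA} (closure under subobjects/quotients, exactness) is designed to deliver. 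A secondary technical point is reducing from algebraic spaces to schemes (or handling algebraic spaces directly in Artin's framework) and checking that all constructions are étale-local on $X$ so the algebraic-space setting causes no extra trouble; this should be routine given that Artin's results are available for algebraic spaces.
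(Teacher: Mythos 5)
Your overall plan matches the paper's architecture in broad strokes: stratify the image, run a Noetherian induction, feed formal thickenings into Artin's theorem on algebraization of formal modifications, and use Theorem~\ref{mainGAGA} to make the coherent data algebraic at each step. You have correctly identified that the nilpotent thickenings of strata are the crux and that the paper's GAGA theorem (closure of the essential image under subobjects/quotients) is exactly what is needed to control them; the paper isolates this as Proposition~\ref{sqzerolift}, which is the square-zero lifting step you gesture at, proved using a coboundary-map comparison and the thickening-of-algebraic-space lemma (Lemma~\ref{schemethick}).

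However, there is a genuine gap at the front of your argument. Artin's Theorem~3.1 in~\cite{A} applies to \emph{formal modifications}, and the formal neighborhood data you want to feed into it only constitutes a formal modification if the original map is generically an isomorphism. A general proper definable map $\phi:X^\df\to\mathcal{S}$ need not be so --- its general fibers can be positive-dimensional --- and your reduction to ``surjective with connected fibers'' via Stein factorization does not bring you to a modification. The paper bridges this by a Hilbert-space argument that you are missing: the general fibers of $\phi$ trace out a definable, hence (by \cite[Cor.~2.3]{MPT}) constructible, subset of the Hilbert space $\Hilb(X)$; taking its Zariski closure $H'$ and the universal family $Z_{H'}$ over it, one checks (using properness of $\phi$ and the limit argument with sequences $\xi_i\to\xi$) that $Z_{H'}$ still maps fiberwise to points of $\mathcal{S}$, and after normalizing $Z_{H'}$ and $H'$ one obtains a factorization of $\phi$ through a proper \emph{modification} $(\tilde H')^\df\to\mathcal{S}$. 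Only then does the Artin machine apply. This contraction-of-fibers step is nontrivial and cannot be replaced by the sort of hand-waving in your phrase ``generic smoothness plus definable Chow applied to graphs'': indeed, $\mathcal{S}$ is not algebraic, so you cannot apply definable Chow to the graph of $\phi$ in $X^\df\times\mathcal{S}$ to algebraize anything on the open dense part. Once the modification reduction is in hand, the open dense locus is trivially algebraic because $\phi$ is an isomorphism there, and your inductive argument on the exceptional locus and its thickenings via Artin plus Theorem~\ref{mainGAGA} proceeds as in the paper.
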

To prove Theorem \ref{mainimage} we use Artin's theorems \cite{A} 
on the algebraization of formal modifications to inductively algebraize $\phi$ on strata. The category of algebraic spaces is needed in Artin's theorems and so is the natural setting for Theorem \ref{mainimage}---even if $X$ is an algebraic variety, the image may not be.  To apply Artin's theorems, one must necessarily consider nilpotent thickenings and thus deal with non-reduced spaces, even if one is only interested primarily in varieties.  In fact, the naive generalization of Theorem \ref{maingriffiths} to non-reduced spaces is false, as we show in Example \ref{needdef}. One of the benefits of working in the definable complex analytic category is that it provides a natural admissibility condition
to extend Theorem \ref{maingriffiths} to this setting, and we prove the more general statement in \S\ref{sectionperiodmap}. 

To algebraize the maps on nilpotent thickenings that arise when applying Artin's theorem, we develop a theory of coherent sheaves in the definable complex analytic category, and a GAGA-type theorem for definable coherent sheaves:

\begin{thm}\label{mainGAGA} Let $X$ be a separated algebraic space of finite type over $\C$ and $X^\df$ the associated definable complex analytic space.  The ``definabilization" functor $\Coh(X)\to\Coh(X^\df)$ is fully faithful, exact,
 and its essential image is closed under subobjects and quotients.
\end{thm}
It follows for example that definable coherent subsheaves of algebraic coherent sheaves are algebraic.  Note that $X$ is \emph{not} required to be proper over $\C$, but in contrast to Serre's classical GAGA theorem \cite{Serre} (as well as most other GAGA-type theorems for proper algebraic spaces), it is \emph{not} true that every definable coherent sheaf is algebraic (see Example \ref{countereg}).

Briefly, the proof of Theorem \ref{mainGAGA} is as follows.  One must first develop the theory of coherent sheaves on definable complex analytic spaces, and in particular prove an Oka coherence theorem (on the coherence of the structure sheaf, see Theorem \ref{coherenceofo}) as well as a Nullstellensatz (Theorem \ref{nullstellensatz}) in this category.  The key point is to carefully keep track of the open refinements of covers needed in the classical proofs in the complex analytic category, and to show that in fact definable (in particular \emph{finite}) refinements suffice.  With the sheaf theory in place, the main claim of Theorem \ref{mainGAGA} (that definable coherent subsheaves of algebraic coherent sheaves are algebraic) follows inductively using the Nullstellensatz from the fact that definable vector subbundles of an algebraic vector bundle are algebraic, by applying Peterzil--Starchenko's definable Chow theorem to the associated geometric total space. 

A tempting alternative to the use of Theorem \ref{mainimage} and the tameness of the period map is provided by the result of Cattani--Deligne--Kaplan mentioned earlier:  one could try to prove that a surjective proper complex analytic map $X^\an\to\mathcal{S}$ from an algebraic variety to an analytic variety with algebraic equivalence relation\footnote{It is important to include the natural scheme structure on the equivalence relation.} $X^\an\times_{\mathcal{S}}X^\an\subset X^\an\times X^\an$ is algebraic.  This is not true at this level of generality---see Example \ref{algbraizing:counterexample} and the surrounding discussion.

It is in general difficult to relate the metric positivity of the Hodge bundle to its ampleness on $Y^\an$ as the latter might be quite singular, and this has been the main obstacle in proving the conjecture directly from the positivity.  One can, however, use it to show $L$ is big and nef on a log resolution, and once $Y^\an$ is known to be algebraic, algebraic sections from a resolution can be descended to deduce the second statement in Theorem \ref{maingriffiths}.

Theorem \ref{maingriffiths} combined with the o-minimal algebraization results have a number of applications, and we describe a few in the final section including:  
\begin{enumerate}
\item A version of the Borel algebraicity theorem for period images (section \ref{sectborel}).
\item As a concrete example of Corollary \ref{introcoarse}, we deduce a general result about the quasi-projectivity of moduli spaces of complete intersections (section \ref{sectcomplete}).
\item A theorem showing that pure polarized integral variations of Hodge structures over dense Zariski open subsets of compact \emph{K\"ahler manifolds} are pulled back from algebraic varieties (section \ref{sectkahler}).
\item A version of the ampleness result in Theorem \ref{maingriffiths} for the \emph{Hodge} bundle (section \ref{secthodge}).
\end{enumerate}
\subsection{Previous results}

Griffiths proved his conjecture in the case that the image $\phi^{\an}(X^{\an})$ is compact \cite[III.9.7]{G2}. Sommese \cite{Som1} proved the conjecture in the case that the image has only isolated singularities, and later \cite{Som2} proved a function field variant. In particular, he proved that the image of a period map admits a proper desingularization which is quasi-projective and such that the induced meromorphic map is rational. However, for example it does not follow from their works that period images admit a compactification by a compact analytic space.

The subject of o-minimal sheaves and the development of a cohomology theory were treated in \cite{EJP}, and this was further developed
in subsequent papers. Variants of the o-minimal Nullstellensatz and Weierstrass preparation theorems were proven by Kaiser \cite{K}.

Kashiwara--Schapira \cite{KS} have constructed a subanalytic site as well as a theory of subanalytic sheaves which is in general different from our construction in \S\ref{sectiondefscheme} for the subanalytic o-minimal structure $\R_{\mathrm{an}}$---see the end of \S\ref{sectiondefscheme} for a more precise discussion.  Petit \cite{P} has defined a ``tempered analytification" functor on smooth algebraic varieties and proven a conditional GAGA theorem reminiscent of Theorem \ref{mainGAGA} on the subanalytic sites of smooth algebraic
varieties in the sense of Kashiwara--Schapira.

\subsection{Outline}  In \S\ref{sectiondefscheme} we develop the theory of definable coherent sheaves and definable complex analytic spaces.  We also define and prove some basic properties of the definabilization functor on algebraic spaces and the analytification functor on definable complex analytic spaces.  In \S\ref{sectiondefGAGA} we prove Theorem \ref{mainGAGA} (see Theorem \ref{gaga}), and in section \ref{sectiondefimage} we prove Theorem \ref{mainimage} (see Theorem \ref{defpropmap}).  We prove a general quasi-projectivity criterion in \S\ref{sectionqpcrit}.  In \S\ref{sectionperiodmap} we apply the results of \S\ref{sectiondefimage},\ref{sectionqpcrit} to prove a stronger version of Theorem \ref{maingriffiths} allowing for non-reduced bases (see Theorem \ref{hodgebetter} and Theorem \ref{qproj}).  In \S\ref{sectionapp} we deduce some applications, including Corollary \ref{introcoarse} (see Corollary \ref{coarse}).

\subsection{Acknowledgements}

J.T. would like to thank Vivek Shende, Jonathan Pila, and Ryan Keast for useful conversations. B.B. 
would like to thank Valery Alexeev and Johan de Jong for useful conversations. Y.B. would like to thank Olivier Benoist, Patrick Brosnan, and Wushi Goldring for useful conversations.  The authors would also like to thank Ariyan Javanpeykar for useful remarks, specifically regarding \S\ref{sectborel}. This paper, and in 
particular \S\ref{sectiondefscheme} owes a lot to the works of Peterzil and Starchenko, who initiated the study of o-minimal complex geometry. B.B. was partially supported by NSF grant DMS-1702149.  The authors are indebted to the referees for their careful reading and for greatly improving the exposition.

\subsection{Notation}
All schemes and algebraic spaces are assumed to be separated and of finite type over $\C$, and all definable topological spaces, definable complex analytic spaces, and analytic spaces are assumed to be Hausdorff.  When helpful (mostly in \S\ref{sectiondefGAGA}, \S\ref{sectiondefimage}, and \S\ref{sectionperiodmap}), we will loosely adopt the convention that algebraic objects are denoted by roman letters, and (definable) analytic objects by script letters.

Throughout, we fix an o-minimal structure with respect to which we will use the word ``definable".  The reader unfamiliar with these notions may assume for concreteness the structure $\R_{\mathrm{alg}}$ for which the definable subsets of $\R^n$ are the real semi-algebraic subsets.  
For the applications to Hodge theory in \S \ref{sectionperiodmap}  we restrict to the o-minimal
structure  $\R_{\an,\exp}$. For a general introduction to o-minimality, see \cite{Vdd}, and \cite{VddM} for a discussion of o-minimality in a
similar language to this paper. 

\section{definable complex analytic spaces}\label{sectiondefscheme}

\subsection{Definable topological spaces}\label{subsectdeftop}
Definable subsets $U\subset\R^n$ have important finiteness properties.  To develop a theory of topological spaces which are locally modeled on definable sets and which preserves these finiteness properties, it is important to insist that only finite covers by open sets are used.  

We begin with a straightforward definition (cf. \cite[Chapter 10]{Vdd}):
\begin{defn}Let $X$ be a topological space.  A \emph{definable atlas} $\{(U_i,\phi_i)\}$ for $X$ is a finite open covering $\{U_i\}$ of $X$ and homeomorphisms $\phi_i:U_i\xrightarrow{\cong} V_i\subset\R^{n_i}$ such that
	\begin{enumerate}
	\item The $V_i$ and the pairwise intersections $V_{ij}:=\phi_i(U_i\cap U_j)$ are definable;
	\item The transition functions $\phi_{ij}:=\phi_j\circ \phi_i^{-1}:V_{ij}\to V_{ji}$ are definable.	
	\end{enumerate}
	For topological spaces $X,Y$ equipped with definable atlases $\{(U_i,\phi_i)\}, \{(U'_{i'},\phi'_{i'})\}$, we say a map $f: X\to Y$ is definable if for all $i$ and $i'$ the composition
	\[\phi_i(U_i\cap f^{-1}(U'_{i'}))\xrightarrow{\phi_i^{-1}}f^{-1}(U'_{i'})\xrightarrow{f}U'_{i'}\xrightarrow{\phi'_{i'}}V'_{i'}\]
	is definable.  Note that this is a condition both on the source and the map.
	
	Finally, we say two atlases $\{(U_i,\phi_i)\}, \{(U'_{i'},\phi'_{i'})\}$ on $X$ are equivalent if the identity $\id:X\to X$ is definable with respect to $\{(U_i,\phi_i)\}$ on the source and $ \{(U'_{i'},\phi'_{i'})\}$ on the target.
	
	\end{defn}
	\begin{defn}
	A \emph{definable topological space} $X=(|X|,\xi_X)$ is a Hausdorff topological space $|X|$ with a choice of equivalence class $\xi_X$ of definable atlases on $|X|$.  A morphism $f:X\to Y$ of definable topological spaces is a continuous map $|f|:|X|\to |Y|$ which is definable with respect to any choice of atlases in $\xi_X,\xi_Y$.  We denote the category of definable topological spaces by $\mbox{(DefTopSp)}$, suppressing the implicit o-minimal structure. 
	\end{defn}

There is an obvious functor $|\cdot|:\mbox{(DefTopSp)}\to\mbox{(TopSp)}$ to the category of topological spaces sending $X$ to $|X|$.   Given a topological space $S$, we refer to a lift of $S$ to $\mbox{(DefTopSp)}$ as a definable structure on $S$.  If $X$ is a definable topological space, we say a subspace $T\subset |X|$ is definable (in $X$) if $\phi_i(T\cap U_i)\subset\R^{n_i}$ is definable for all $i$.  In this case there is a natural definable structure $Z$ on $T$ for which the inclusion $Z\to X$ is a morphism, and it is the unique one with this property.  We refer to such a $Z$ as a definable subspace $Z\subset X$, and we often blur the notational distinction between definable subspaces $Z\subset X$ and subspaces $Z\subset|X|$ which are definable (in $X$).  Note that for a definable topological space $X$ and a choice of atlas $\{(U_i,\phi_i)\}$, the open sets $U_i\subset |X|$ have natural definable structures as open definable subspaces $U_i\subset X$.

If $X,Y$ are definable topological spaces, $X\times Y$ naturally acquires the structure of a definable topological space, and we say a map $\phi:|X|\to|Y|$ is definable (in $X$ and $Y$) if the graph is in $X\times Y$.  One easily shows that a morphism $f:X\to Y$ of definable topological spaces is equivalent to a definable continuous map $|f|:|X|\to|Y|$, and that images and preimages of definable subsets under a morphism $f:X\to Y$ are definable.

We finish this section by studying finite maps in the definable category.  Recall that a topological space $X$ is regular if for every point $x\in X$ and open $x\in U\subset X$ there is an open $x\in V\subset U$ such that the closure $\bar V$ of $V$ in $X$ is contained in $U$.  We say a definable topological space is regular if the underlying topological space is. 

\begin{defn}

Let $f:X\ra Y$ be a morphism of definable topological spaces. We say that $f$ is \emph{quasi-finite} if $|f|$ has finite
fibers, and \emph{proper} if $|f|$ is.  We say $f$ is \emph{finite} if it is quasi-finite and proper.

\end{defn}

\begin{prop}\label{propercover}

Let $f:X\to Y$ be a finite morphism of regular definable topological spaces, and let $\{X_i\}$ 
be a definable open cover of $X$.   Then there is a definable open cover $\{W_j\}$ refining $\{X_i\}$ and a definable open cover $\{Y_k\}$ of $Y$ such that each $f^{-1}(Y_k)$ is a disjoint union of $W_j$.

\end{prop}
\begin{proof}
By \cite[Chapter 10 \S 1.8]{Vdd} $X$ (resp. $Y$) can be definably embedded as a definable subspace of $\R^m$ (resp. $\R^n$).  Moreover, by passing to a definable cover of $Y$, we may assume there is a coordinate of $\R^m$ which separates the points in each fiber of $f$.  Thus by projecting we may assume $X\subset Y\times\R$ and $f$ is the first projection.

\def\tot{\operatorname{tot}}

Recall that a definable triangulation of a definable topological space $X$ is a definable homeomorphism $\Phi:X\to\tot(K)$ for a (finite) simplicial complex $K$ (see \cite[Chapter 8]{Vdd}).
\begin{lem}
Let $Y\subset \R^n$ be a definable set, $X\subset Y\times\R$ a definable set such that the first projection $f:X\to Y$ is proper.  Let $\{A_i\}$ be a finite set of definable subsets of $X$.  Then there exist definable triangulations of $X$ and $Y$ such that
\begin{enumerate}
    \item each $A_i$ is a subcomplex of $X$ with respect to the triangulation;
    \item for each open simplex $D$ of $Y$, $f^{-1}(D)$ is a disjoint union of open simplices of $X$, each mapping isomorphically to $D$;
    \item the closure of each simplex of $X$ injects into $Y$.
\end{enumerate}
\end{lem}
\begin{proof}
Applying normal definable cell decomposition to $X$ \cite[Chapter 3 \S 2.11]{Vdd}, we obtain cell decompositions of $X$ and $Y$ such that each $A_i$ is a union of cells of $X$ and each cell of $X$ is the graph of a continuous definable function over a cell of $Y$.  In particular, for any cell $D$ of $Y$ the preimage $f^{-1}(D)$ is a disjoint union of cells each mapping isomorphically to $D$.

By definable triangulation \cite[Chapter 8 \S 2.9]{Vdd}, there is a definable triangulation of $Y$ for which each of the above cells of $Y$ is a subcomplex.  By \cite[Chapter 8 \S 2.8]{Vdd} this triangulation lifts to $X$, and clearly satisfies properties (1) and (2).  Note that in the terminology of \cite{Vdd}, the properness of $f$ guarantees the multivalued function $\pi_2\circ f^{-1}$ is closed via \cite[Chapter 8 \S 2.6]{Vdd}, and we may reduce to the case that $\pi_2\circ f^{-1}$ is full as in the proof of \cite[Chapter 8 \S 2.9]{Vdd}.

By taking the barycentric subdivisions of these triangulations, properties (1) and (2) still hold, and we claim we additionally have property (3).  Indeed, the closure of each simplex of the subdivision of a simplex $\Delta$ only intersects one face of $\Delta$ of each dimension, and so (3) follows from (2).
\end{proof}
Let $\{C_j\}$ (resp. $\{D_k\}$) be the open simplices of the triangulation of $X$ (resp. $Y$) guaranteed by the lemma, taking $\{A_i\}=\{X_i\}$.  For each $C\in\{C_j\}$, let $X(C)$ be the union of open simplicies in $\{C_{j}\}$ having $C$ as a face; likewise for $D\in\{D_{k}\}$ define $Y(D)$.  We claim that $\{Y_k\}=\{Y(D_{k})\}$ and $\{W_j\}=\{X(C_{j})\}$ are the desired open covers.

First, it is clear that each $X(C)$ is definable and open in $X$, and likewise for each $Y(D)$.  Moreover, if $C\subset X_i$ then $X(C)\subset X_i$, so $\{W_j\}$ refines $\{X_i\}$.  Next, suppose $D\in\{D_{k}\}$, and $C,C'\in\{C_{j}\}$ are distinct open simplices in $X$ mapping to $D$. By property (3) of the lemma no open simplex has both $C$ and $C'$ as faces, so $X(C)$ and $X(C')$ are disjoint. 

We finally claim that $f^{-1}(Y(D))$ is the disjoint union of $X(C)$ for open simplices $C$ of $X$ mapping to $D$. To see this, if is sufficient to know that if $D'$ is an open simplex having $D$ as a face, then every lift
$C'$ of $D'$ has \emph{some} lift of $D$ as a face. This is immediate by the properness of $f$ and property (2), and the proof is therefore complete.
\end{proof}

\begin{remark}
\label{simply}
By definable triangulation \cite[Chapter 8, \S 2.9]{Vdd}, any finite definable open cover of a definable topological space can be refined by a finite cover by simply-connected definable open subsets.
\end{remark}

\subsection{Sheaves on definable topological spaces}In this section we collect some basic notions regarding sheaves on definable topological spaces.  Because of the insistence on finite covers, the sheaf theory requires a very mild use of Grothendieck topologies.  

\def\Ab{\operatorname{Ab}}
\begin{defn}Let $X$ be a definable topological space.  The \emph{definable site} $\underline{X}$ of $X$ is the site whose underlying category is the category of definable open subsets of $X$ (with inclusions as morphisms) and whose coverings are \emph{finite} coverings by definable open sets.  
\end{defn}
We sometimes abusively refer to sheaves on the definable site as sheaves on $X$.  Given a morphism $f:X\to Y$ of definable topological spaces, there are in the usual way adjoint functors $f_*:\Ab(\underline{X})\to \Ab(\underline{Y})$ and $f^{-1}:\Ab(\underline{Y})\to\Ab(\underline{X})$ on the categories of abelian sheaves.

\begin{remark}\label{rmk: no points}We remark that exactness in $\Ab(\underline{X})$ \emph{cannot} be checked on stalks.  See Example \ref{eg: no points}.  There is a space obtained by adjoining model-theoretic ``generic points" called types whose conventional category of sheaves is equivalent to sheaves on the definable site, and this is the perspective taken by, e.g., Edmundo--Jones--Peatfield \cite{EJP}.  In particular, exactness can be checked on stalks if we include these additional points.
\end{remark}

From Proposition \ref{propercover} we deduce the following:

\begin{cor} \label{finiteexact} Let $f:X \to Y$ be a finite morphism of regular definable topological spaces.  Then $f_*:\Ab(\underline{X})\to\Ab(\underline{Y})$ is exact.
\end{cor}
\begin{proof}  Let $A\to B\to C$ be an exact sequence of sheaves on $X$; we want to prove the exactness of $f_*A\to f_*B\to f_* C$.  For a definable open $U$ of $Y$, if a section $s$ in $f_*B(U)=B(f^{-1}(U))$ is zero in $f_*C(U)$, then after taking an open definable cover of $f^{-1}(U)$, $s$ is in the image of $A$.  By Proposition \ref{propercover} we refine our open definable cover
by components of $f^{-1}(Y_k)$, where $\{Y_k\}$ is an open cover of $Y$. It follows that for each
$k$, $s|_{Y_k}$ is in the image of $ f_*A(Y_k)$, completing the proof.
\end{proof}

\begin{defn}  A locally $\C$-ringed definable space $(X,\O_X)$ is a definable topological space $X$ and a sheaf $\O_X$ of $\C$-algebras on the definable site $\underline{X}$ whose stalks are local rings.  A morphism of locally $\C$-ringed definable spaces $f:(X,\O_X)\to(Y,\O_Y)$ is a morphism $f:X\to Y$ of definable spaces and a morphism $f^\sharp:f^{-1}\O_Y\to\O_X$ of sheaves of $\C$-algebras which is local on stalks.
\end{defn}

\begin{remark}
In general some care must be taken to define a locally ringed site when the site does not have enough points, see for example the discussion surrounding \cite[\href{https://stacks.math.columbia.edu/tag/04EU}{Tag 04EU}]{stacks-project}.  For our purposes the above definition will suffice.
\end{remark}
\begin{remark}\label{rmk closed immersions}
The notions of closed and open immersions of locally ringed spaces naturally generalize to locally $\C$-ringed definable spaces.  See for example \cite[\href{https://stacks.math.columbia.edu/tag/01HK}{Tag 01HK},\href{https://stacks.math.columbia.edu/tag/01HE}{Tag 01HE}]{stacks-project}.
\end{remark}

For $X$ a locally $\C$-ringed definable space, denote by $\Module(\O_X)$ the abelian category of $\O_X$-modules.
Given a morphism $f:X\to Y$ of locally $\C$-ringed definable spaces, we naturally have a functor $f_*:\Module(\O_X)\to\Module(\O_Y)$, and we define a functor $f^*:\Module(\O_Y)\to\Module(\O_X)$ via
\[f^*:F\mapsto \O_X\otimes_{f^{-1}\O_Y} f^{-1}F\]   
where as usual we have used the adjoint map $f^\sharp:f^{-1}\O_Y\to\O_X$ to make $\O_X$ an $f^{-1}\O_Y$-algebra.

\begin{defn}\label{def coh}Let $X$ be a locally $\C$-ringed definable space.  Given an $\O_X$-module $M$, we say that $M$ is \emph{of finite type} (as an $\O_X$-module) if there exists a definable cover $X_i$ of $X$ and surjections $\O_{X_i}^n\rra M_{X_i}$ for some positive integer $n$ on each of  
those open sets. We say $M$ is \emph{of finite presentation} (as an $\O_X$-module) if there is a definable cover $X_i$ of $X$ and finite presentations
\[\O^m_{X_i}\to\O^n_{X_i}\to M_{X_i}\to 0.\]We say that $M$ is \emph{coherent} (as an $\O_X$-module) if it is of finite type, and given any definable open $U\subset X$ and any $\O_U$-module homomorphism $\phi:\O_U^n\ra M_U$, the kernel of $\phi$ is of finite type.  
\end{defn}
Note that it easily follows that if $M$ is a coherent $\O_X$-module and $N\subset M$ is an $\O_X$-submodule of finite type, then $N$ is coherent.  Moreover, the kernel of any homomorphism $M\to M'$ of coherent $\O_X$-modules is of finite type and therefore coherent.  The following is also standard and we include the proof to give a flavor of the types of arguments used.
\begin{lemma}\label{basiccoherence}

Let $0\ra M_1\ra M \ra M_2 \ra 0$ be an exact sequence of sheaves on a locally $\C$-ringed definable space $X$. If two of $\{M,M_1,M_2\}$ are coherent then so is the third.

\end{lemma}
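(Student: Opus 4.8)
The plan is to prove the standard ``two out of three'' property for coherence by checking each of the three cases, working locally on definable open subsets of $X$ so that we may freely pass to finite refinements of covers. Throughout, the two substantive things to verify for a sheaf $N$ are (i) that $N$ is locally finitely generated, and (ii) that for every definable open $V$ and every map $\phi:\O_V^n\to N_V$ the kernel $\ker\phi$ is locally finitely generated; property (i) is usually immediate from a diagram chase, so the work is concentrated in (ii). A recurring auxiliary fact I will establish first is that if $M$ is coherent and $V$ is definable open, then any finitely generated $\O_V$-submodule $K\subset M_V$ is in fact coherent: given $\psi:\O_W^m\to K_W\hookrightarrow M_W$ its kernel agrees with the kernel of the composite into $M_W$, which is locally finitely generated by coherence of $M$; and $K$ is locally finitely generated by hypothesis. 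This lemma will be used repeatedly.

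First, suppose $M_1$ and $M_2$ are coherent; I claim $M$ is coherent. Local finite generation of $M$ follows by combining, on a common refinement, generators of $M_1$ and lifts of generators of $M_2$, using the surjection $M\to M_2$ and right-exactness of the relevant cokernels. For the kernel condition, take $\phi:\O_V^n\to M_V$. Composing with $M_V\to (M_2)_V$ gives a map whose kernel $P$ is locally finitely generated by coherence of $M_2$; refining, we may assume $P$ is generated by finitely many sections, which lift (after a further refinement) to a map $\O_V^m\to \O_V^n$ with image landing, under $\phi$, inside $(M_1)_V$. Thus $\ker(\phi)$ surjects onto $P$ with kernel $\ker(\O_V^m\to M_1)$ — here I use that the image of $\O_V^m\to M_V$ lands in $(M_1)_V$, so this is computing a kernel of a map into $M_1$, which is coherent — and then $\ker(\phi)$ is an extension of two locally finitely generated sheaves, hence locally finitely generated after refinement.

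Second, suppose $M$ and $M_2$ are coherent; I claim $M_1$ is coherent. Local finite generation of $M_1$: locally $M_1$ is the kernel of $M_V\to (M_2)_V$; writing $M_V$ as a quotient of $\O_V^n$ (possible after refinement since $M$ is locally finitely generated), $M_1$ is the image of the kernel of the composite $\O_V^n\to (M_2)_V$, and that kernel is locally finitely generated by coherence of $M$ (indeed by coherence of $M_2$ applied via the composite, combined with coherence of $M$), so $M_1$ is locally finitely generated. For the kernel condition, a map $\phi:\O_V^n\to (M_1)_V$ may be viewed as a map into $M_V$ via the inclusion $M_1\hookrightarrow M$, and $\ker(\phi)$ is the same computed either way; coherence of $M$ finishes it.

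Third, suppose $M$ and $M_1$ are coherent; I claim $M_2$ is coherent. Local finite generation of $M_2$ is immediate from the surjection $M\to M_2$ and local finite generation of $M$. For the kernel condition, take $\phi:\O_V^n\to (M_2)_V$. After refining $V$, lift $\phi$ along the surjection $M_V\to (M_2)_V$ to a map $\tilde\phi:\O_V^n\to M_V$; this is possible because $\O_V^n$ is free and the generators lift locally. Then $\ker(\phi)=\tilde\phi^{-1}((M_1)_V)$ fits in an exact sequence $0\to \ker(\tilde\phi)\to \ker(\phi)\to \tilde\phi(\O_V^n)\cap (M_1)_V\to 0$. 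Now $\ker(\tilde\phi)$ is locally finitely generated since $M$ is coherent, and $\tilde\phi(\O_V^n)$ is a finitely generated submodule of the coherent sheaf $M$, hence coherent by the auxiliary lemma; thus its intersection with $(M_1)_V$, being the kernel of the induced map to $(M_2)_V$ after another lift, is locally finitely generated — using coherence of $M_1$ (as a submodule-compatible statement) or of $M$. Hence $\ker(\phi)$ is locally finitely generated after refinement.

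I expect the main obstacle to be bookkeeping rather than conceptual: every step requires passing to a finite common refinement of several definable open covers, and one must make sure the definable category genuinely allows this — which it does, since finite covers are exactly the admissible coverings on the definable site, and finite intersections of definable opens are definable open. The one genuinely delicate point is the auxiliary lemma that a finitely generated submodule of a coherent sheaf is coherent, and more specifically the assertion, used in the third case, that the intersection of a finitely generated (hence coherent) submodule with another coherent (or kernel-type) submodule is locally finitely generated; this is the analytically substantive step and reduces to: given two maps $\O_V^a\to M$ and $\O_V^b\to M$ with $M$ coherent, the fiber product $\O_V^a\times_M \O_V^b$ is locally finitely generated, which follows by forming $\O_V^{a+b}\to M$, $(x,y)\mapsto$ (image of $x$) $-$ (image of $y$), and invoking coherence of $M$ to see its kernel is locally finitely generated.
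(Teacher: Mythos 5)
Your proof is essentially correct and takes the same approach as the paper's: each of the three cases is handled by the same diagram chase (after matching your ordering of the cases to the paper's). Two expository slips are worth flagging, though neither is a gap in the underlying argument. First, in your case ``$M_1,M_2$ coherent $\Rightarrow M$ coherent,'' the sentence asserting that $\ker(\phi)$ surjects onto $P$ with kernel $\ker(\O_V^m\to M_1)$ is backwards: $\ker\phi$ is a \emph{subsheaf} of $P$, not a quotient, and $\ker\phi$ is not an extension of the two sheaves you name. What you actually have is that the local surjection $\rho\colon\O_V^m\to P$ pulls $\ker\phi$ back to $\ker(\O_V^m\to (M_1)_V)$, which is locally finitely generated by coherence of $M_1$, whence $\ker\phi$ is locally finitely generated as its image under $\rho$ --- a quotient of a locally finitely generated sheaf, not an extension. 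Second, in your case ``$M,M_1$ coherent $\Rightarrow M_2$ coherent,'' the short exact sequence $0\to\ker\tilde\phi\to\ker\phi\to\tilde\phi(\O_V^n)\cap (M_1)_V\to 0$ is more machinery than you need: the fiber product $\O_V^n\times_M\O_V^b$ from your closing paragraph (with $\O_V^b$ presenting $M_1$ locally) already surjects onto $\tilde\phi^{-1}((M_1)_V)=\ker\phi$ under the first projection, which is exactly the paper's move with $\psi\oplus\phi'\colon\O_V^m\oplus\O_V^n\to M$. Your auxiliary lemma (finitely generated submodule of a coherent sheaf is coherent) is a correct standard fact, but it is not actually required once you use the fiber-product shortcut.
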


\begin{proof}\hspace{1in}

\begin{enumerate}

\item Assume $M,M_1$ are coherent. Since $M$ is of finite type, so is $M_2$. Let us show that $M_2$ is coherent.  Suppose $V\subset X$ is a definable open and $\phi:\O_V^n\ra {M_2}_{|V}$ is any map. 
The map $\phi$ is determined by the image of a basis. Since $M$ surjects onto $M_2$, by further restricting to a finite open cover we can assume that $\phi$ lifts to a map $\phi':\O_V^n\ra M_{|V}$. 

Since $M_1$ is coherent we may choose a surjection $\psi:\O_V^m\ra {M_1}_{|V}$ by further restricting to a finite open cover. Consider $\psi\oplus\phi': \O_V^m\oplus\O_V^n\ra M_{|V}$. Then the kernel of $\psi\oplus\phi'$ is finitely generated since $M$ is coherent, and surjects onto the kernel of $\phi$. Thus the kernel of $\phi$ is finitely generated, and so $M_2$ is coherent.

\item Assume $M,M_2$ are coherent. Then any map to $M_1$ is also a map to $M$, and thus has finitely generated kernel. Moreover, if $\phi:\O_X^n\rra M$, then the kernel of the induced map to $M_2$ is finitely generated since
$M_2$ is coherent, and surjects to $M_1$.

\item Assume $M_1,M_2$ are coherent. To see that $M$ is of finite type, we first restrict to a finite open covering so that one can choose surjections $\phi_i:\O_X^{n_I}\ra M_i$. By further restricting, we may lift $\phi_2$ to 
a map $\phi'_2:\O_X^{n_2}\ra M$. Now the map $\phi_1\oplus \phi_2':\O_X^{n_1+n_2}\ra M$ is a surjection.

 Finally, let $\phi:\O_X^m\ra M$ be any map.  When continued to $M_2$, the kernel $K$ of $\phi_0:\O_X^m\rra M_2$ is of finite type. The induced map from $K$ to $M_1$ has kernel which is of finite type, and 
 this kernel is in fact $\ker\phi$. This completes the proof.
\end{enumerate}
\end{proof}

\begin{cor}\label{cor coh is ab}The full subcategory $\Coh(\O_X)\subset\Module(\O_X)$ of coherent $\O_X$-modules is an extension closed abelian subcategory.
\end{cor}
\begin{proof}By the lemma and the remarks after Definition \ref{def coh}.
\end{proof}

\begin{cor}\label{cor fp=coh}Assume $\O_X$ is a coherent $\O_X$-module.  Then:
\begin{enumerate}
\item $\O_X^n$ is coherent for any $n$.
\item An $\O_X$-module $M$ is coherent iff it is of finite presentation.
\end{enumerate}
\end{cor}

\subsection{Basic definable complex analytic spaces}Identify $\C=\R^2$ using the real and imaginary parts, and give $\C^n$ the definable structure coming from the identification $\C^n=\R^{2n}$.  For a definable open set 
$U\subset \C^n$ we let $\O_{\C^n}(U)$ be the definable holomorphic functions on $U$, that is the maps $U \rightarrow \C$ that are both definable and holomorphic. 

\begin{lemma}

The presheaf $\O_{\C^n}:\underline{\C^n}\to \mathrm{Ab} $ which to $U\in\underline{\C^n}$ associates $\O_{\C^n}(U)$ is a sheaf on $\underline{\C^n}$.

\end{lemma}

\begin{proof}
Let $U\subset\C^n$ be a definable open set and let $U_i$ be a finite definable covering of $U$. If a function $f\in \O_{\C^n}(U)$ vanishes on each $U_i$, it must be identically 0. Moreover, if $f_i$ are definable holomorphic functions on $U_i$ which agree on overlaps, they by analytic continuation glue to a 
single holomorphic function $f$ on $U$. Since the $U_i$ are a finite covering of $U$ and each $f_i$ is definable, it follows that $f$ is also definable and hence $f\in\O_{\C^n}(U)$ as required.
\end{proof}
\def\colim{\operatorname{colim}}
Note that the stalks $\O_{\C^n,x}:=\colim_{x\in U} \O_{\C^n}(U)$ are local rings.

\begin{eg}\label{eg: no points} The sheaf $\mathcal{O}_{(\C^n)^\an}$ of holomorphic functions is a sheaf on $\underline{\C^n}$.  If our structure contains $\R_\an$, then $\O_{\C^n}\subset\O_{(\C^n)^\an}$ have the same stalks but are not equal, and therefore exactness on the definable site cannot be checked on stalks.  Crucially, we will show (see Corollary \ref{cor exact on stalk}) that exactness in $\Coh(\O_{\C^n})$ \emph{can} be checked on stalks.
\end{eg}
\begin{defn}

Given an open definable subset $U\subset \C^n$ and a finitely generated ideal $I$ of $\O_{\C^n}(U)$, the vanishing locus\footnote{Here we use $|\cdot|$ to denote the vanishing locus as a definable topological space---that is, forgetting the sheaf of functions---rather than the underlying topological space as in \S\ref{subsectdeftop}.} $X=|V(I)|$ is naturally a definable topological space.  We call the data of $ U\subset\C^n$ and $I$ a \emph{basic definable complex analytic space}.  We often refer to the basic definable complex analytic space via $X\subset U\subset\C^n$, and denote by $I_X:=I\O_U$. 

There is a sheaf
$\O_{U}/I_X$ on $\underline{U}$ which is supported on $\underline{X}$. We set $\O_X$ to be the restriction of $\O_U/I_X$ to $\underline{X}$, and refer to the pair $(X,\O_X)$ as the associated locally $\C$-ringed definable space.
\end{defn}
\begin{remark}We will eventually see in Corollary \ref{cor sub ringed} that given two basic definable complex analytic spaces $X\subset U\subset \C^n$ and $Y\subset V\subset\C^n$, a morphism of the associated locally $\C$-ringed definable spaces $(X,\O_X)\to(Y,\O_Y)$ is, after passing to a definable cover of $X$ in $U$, the natural one induced by a definable holomorphic map $f:U\to V$ for which $f^\sharp (f^{-1}I_Y)\subset I_X$.  This will allow us to glue basic definable complex analytic spaces by gluing the $\C$-locally ringed definable spaces.
\end{remark}

\subsection{Definable Oka coherence}\label{sect oka}  In this section we prove the analog of the Oka coherence theorem \cite[Chapter 2 \S5.2]{Grauert} for basic definable complex analytic spaces:

\begin{thm}\label{coherenceofoCn}
The definable structure sheaf $\O_{\C^n}$ of $\C^n$ is a coherent $\O_{\C^n}$-module.
\end{thm}

The statement of Theorem \ref{coherenceofoCn} is local.  The proof will largely follow the classical proof (e.g. \cite[Chapter 2 \S5]{Grauert}) by observing that whenever one must pass to a refinement of an open cover in the classical setting, a definable refinement is sufficient in our setting.  One example is the following definable version of Weierstrass division:
\begin{lemma}\label{Weierstrass}

Let $V\subset\C^n$ be a definable open set, $P\in\O_{\C^n}(V)[w]$ a monic polynomial in $w$ with coefficients that are definable holomorphic functions on $V$.  Let $U\subset V\times \C$ 
be a definable open set containing $X:=|V(P)|\subset V\times \C$.  Then given any definable holomorphic function $f$ on $U$, one can uniquely write $f=QP+R$ for definable holomorphic functions $Q,R$ on $U$ with $R\in \O_{\C^n}(V)[w]$ of degree 
less than the degree of $P$.

\end{lemma}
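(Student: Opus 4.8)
The plan is to run the classical Weierstrass division argument while carrying the definability along at every stage. First I would reduce to a statement about power series by observing that $V(P)$ is compact over compact subsets of $U$: since $P$ is monic in $w$, the roots $w$ lie in a bounded region over any bounded region of $U$, so $V(P)\subset X$ means that for each $p\in U$ there is a definable neighborhood $U_p\times \Delta_p$ (with $\Delta_p\subset\C$ a disc) containing the part of $V(P)$ over $U_p$ and contained in $X$. The division $f=QP+R$ is then produced by the usual contour-integral formulas: writing $d=\deg P$, one sets
\[
Q(z,w)=\frac{1}{2\pi i}\int_{|\zeta-c|=r}\frac{f(z,\zeta)}{P(z,\zeta)}\,\frac{d\zeta}{\zeta-w},
\]
and $R=f-QP$, where the circle $|\zeta-c|=r$ is chosen (locally on a definable cell of $U$, using Proposition \ref{cell decomp}) to enclose exactly the $d$ roots of $P(z,\cdot)$. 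The key classical facts are that $R$ so defined is a polynomial in $w$ of degree $<d$ with holomorphic coefficients, and that $Q$ is holomorphic; this is standard.

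The real content is definability. The coefficients of $R$ are themselves given by contour integrals of $f/P$ against powers of $\zeta$, i.e. by honest integrals of a definable function over a definably-varying contour; since we are working in an o-minimal structure in which $\R_{\an}$-functions (or at least the relevant integrals) are definable — which holds for $\R_{\mathrm{alg}}$-expansions like $\R_{\an,\exp}$, the setting fixed in the paper — these integrals are definable functions of $z$. The same applies to $Q$ as a function of $(z,w)$. So one must check: (i) the contours can be chosen to vary definably, which follows by stratifying $U$ into finitely many definable cells over which the root configuration of $P(z,\cdot)$ is constant (again Proposition \ref{cell decomp} plus o-minimal cell decomposition applied to the discriminant locus of $P$), and picking radii $r$ as definable functions on each cell; and (ii) that the locally-defined $Q$ and $R$, being unique, glue to global definable holomorphic functions on $X$ and $U\times\C$ respectively — uniqueness is the classical statement that if $QP+R=0$ with $R$ of degree $<d$ then $R\equiv 0$ (evaluate at the $d$ roots), hence $Q\equiv 0$, so the local pieces agree on overlaps, and finiteness of the cover forces the glued functions to remain definable.

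The main obstacle is item (i): producing a single definable choice of integration contour valid on all of $U$ at once. Over the locus where $P$ has a multiple root in $w$, or where roots collide as $z$ moves, one cannot use a naive fixed radius; the fix is to pass to the stratification by the (definable) discriminant of $P$, work cell by cell, and on each cell choose $c(z)$ and $r(z)$ — e.g. $c(z)$ the average of the roots over that cell and $r(z)$ slightly more than the maximal distance from $c(z)$ to a root, both definable — so that the contour stays inside $X$ (here we use $V(P)\subset X$ and that $X$ is open, shrinking the cells if necessary to a finite definable refinement). Once the contours are definable on each piece, the integral formulas give definable $Q,R$ on each piece, and uniqueness glues them; definability of the result is automatic because the cover is finite and definable. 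The holomorphy and the degree bound on $R$, as well as uniqueness, are exactly as in the non-definable Weierstrass division theorem and require no new idea.
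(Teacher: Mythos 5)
There is a genuine gap at the heart of your argument: you assert that the contour integrals defining $Q$ and $R$ are definable ``since we are working in an o-minimal structure in which $\R_{\an}$-functions (or at least the relevant integrals) are definable,'' but this is both misplaced and, as a general principle, false. The lemma sits in Section~\ref{sectiondefscheme}, which is developed over an \emph{arbitrary} fixed o-minimal structure; the paper explicitly invites the reader to take $\R_{\mathrm{alg}}$, in which $\R_\an$-functions are certainly not definable. And even after restricting to $\R_{\an,\exp}$, it is simply not a theorem that parametrized integrals of definable functions remain definable: integration takes one out of $\R_{\an}$ (think $\int_1^x dt/t=\log x$), and there is no general closure-under-integration statement for $\R_{\an,\exp}$ either. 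So the step ``these integrals are definable functions of $z$'' is unsupported as written and would not survive in $\R_{\mathrm{alg}}$.

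The fix is to evaluate the contour integral in closed form via the residue theorem: over the locus where $P(z,\cdot)$ has $d$ distinct roots $\zeta_1(z),\dots,\zeta_d(z)$ (which are definable by definable choice), each coefficient of $R$ becomes a finite algebraic expression in $f(z,\zeta_j(z))$ and $P'(z,\zeta_j(z))$, and definability is manifest. But once you do this you are no longer ``integrating'' at all—you are doing Lagrange interpolation at the roots, which is precisely what the paper does: the paper characterizes $R|_{U_1}$ (on the dense open $U_1\subset U$ of distinct roots) as the unique degree-$<d$ polynomial agreeing with $f$ on $V(P)$, observes this is a definable condition, extends $R$ to all of $U$ by taking the closure of its graph, and recovers $Q=(f-R)/P$. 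The paper also avoids your discriminant-stratification-and-radii bookkeeping by first factoring $P=\prod P_i$ into the (definable) minimal polynomials of the irreducible components of $V(P)$ and inducting on degree; this handles repeated factors cleanly before the interpolation step. In short: your scaffolding (reduce to distinct roots, glue by uniqueness) is sound, but the central definability claim needs the residue/interpolation reformulation, at which point you land on the paper's argument.
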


\begin{proof}
The claimed $Q,R$ exist uniquely in the analytic category \cite[Chapter 2 \S 1.2]{Grauert}, so it suffices to prove they are definable.  Let $X_i$ be the irreducible analytic components of $X$ and $P_i$ be the minimal polynomial of $w$ over $X_i$. Note that the $X_i$ are definable sets and so each $P_i$ is definable. Also, $P$
must be a product of the $P_i$, and so by induction on $\deg P_i$ it suffices to prove the theorem for each $P_i$ one at a time. 
We may thus assume that $P$ is irreducible. Let $V_1\subset V$ be the dense open set where $P(w)$ has distinct roots, which is definable. On $V_1$ the coefficients of $R$ are the $a_0,\dots,a_{n-1}\in\O_{\C^n}(V_1)$ such that 
$\sum_{i=0}^{n-1} a_iw^i$ agrees with $f$ on $X$, where $n=\deg P$. Thus, it follows that $R_1:=R|_{V_1}$ is definable.  Since $U_1:=V_1\cap U$ is dense in $U$ it follows that $R$ is definable as well, since the graph of $R$ is the closure of the graph of $R_1$.
Hence $Q$ is definable since $Q=\frac{f-R}{P}$, and the proof is complete.
\end{proof}

Another important input of a similar flavor is a definable version of Noether normalization:
\begin{thm}[Peterzil--Starchenko {\cite[Theorem 2.14]{PS}}]\label{thm proper coord}Given an open definable subset $U\subset\C^n$ and a closed definable complex analytic subset $X\subset U$ of dimension $d$, there is a definable cover $\{U_i\}$ of $U$ and linear projections $\pi_i:\C^n\to \C^d$ such that the restrictions $p_i:X_i\to\pi_i(U_i)$ are finite, where $X_i=X\cap U_i$.
\end{thm}

\begin{proof}[Proof of Theorem \ref{coherenceofoCn}]
Following \cite[Chapter 2 \S 5.1]{Grauert}, we start with a coherence criterion.
\begin{lemma}The following are equivalent.
\begin{enumerate}
\item For any connected open definable $U\subset \C^n$ and any nonzero definable holomorphic function $f\in\O_{\C^n}(U)$ we have that $M=\O_U/f\O_U$ is a coherent $M$-module.
\item $\O_{\C^n}$ is a coherent $\O_{\C^n}$-module.
\end{enumerate}
\end{lemma}
\begin{proof}The backward implication is immediate from Corollary \ref{cor coh is ab}.  For the forward implication, suppose $U\subset \C^n$ is a definable open which we may assume is connected and $\phi:\O_U^m\to\O_U$ an $\O_U$-module homomorphism given by $f_1,\ldots,f_m\in\O_{\C^n}(U)$.  Evidently $\ker(\phi)$ is finitely generated if all the $f_i$ vanish, so we may assume without loss of generality that $f=f_1$ is nonzero.

Consider the projection $\pi:\O_U\to M:=\O_U/f\O_U$ and note we have a commutative diagram
\[\xymatrix{
\O_U^m\ar[d]_{\pi^m}\ar[r]^\phi&\O_U\ar[d]^\pi\\
M^m\ar[r]^{\bar{\phi}}&M.
}\]
The vertical maps clearly have finitely generated kernels (as $\O_U$-modules).  As $M$ is coherent by hypothesis, $\ker (\bar{\phi})$ is of finite type as an $M$-module (and therefore also an $\O_U$-module), and it follows by lifting the generators (possibly after passing to a finite refinement) that $\ker(\pi\circ\phi)$ is a finite type $\O_U$-module.  The $\O_U$-module homomorphism $s\mapsto s-(\phi(s)/f)e_1$ gives a section of the inclusion $\ker(\phi)\to\ker(\pi\circ\phi)$, and therefore $\ker(\phi)$ is of finite type.
\end{proof}
It therefore suffices to prove the criterion in the lemma; we do so by induction on $n$, the case $n=0$ being obvious.  We thus assume that $\O_{\C^{n-1}}$ is coherent.  

Let $U\subset \C^n$ be a connected definable open set and $f\in\O_{\C^n}(U)$ nonzero.  Let $X:=|V(f)|$ be the zero set of $f$.  Using Lemma \ref{thm proper coord}, there is a covering of $U$ by finitely many definable open sets $U_i$ such that for each $U_i$ there 
is a linear set of coordinates for which $X_i=X\cap U_i$ is finite over its projection down to $\C^{n-1}$.  Replacing $X$ with $X_i$ we may therefore assume without loss of generality that there is a linear projection $\pi:U\to\C^{n-1}$ whose restriction $p:X\to V$ is finite over its image $V:=\pi(U)\subset\C^{n-1}$. It follows that $V$ is a definable open set of $\C^{n-1}$.

Let $W_j$ be the irreducible components of $X$, and let $P_j(w)\in\O_{(\C^{n-1})^\an}(V)[w]$ be the unique irreducible polynomials whose zero-locus is $W_j$. Note that the coefficients of $P_j(w)$ are definable since it can be defined on the dense definable open $W'_j\subset W_j$ where $W_j\to \pi(W_j)$ is \'etale as 
$P_j(v,w)=\prod_{(v,t)\in W_j'}(w-t)$. By the analytic Weierstrass preparation theorem, there are positive integers $k_i$ such that $\frac{f}{\prod_i P_i(w)^{k_i}}$ is nowhere vanishing, and thus must be a definable unit. Hence
we may assume $f=P(w):=\prod_i P_i(w)^{k_i}$.

Let $k=\deg P$ and let $M=\O_U/f\O_U$, which we consider (by restricting) as a sheaf on $X$.  It suffices to show that the kernel of any homomorphism $\phi:M^n\to M$ of $M$-modules is of finite type (as an $M$-module).  By Lemma \ref{Weierstrass} we have $p_*M\cong \O_V^k$ as $\O_V$-modules.  Thus, $p_*\phi$ is a homomorphism of coherent $\O_V$-modules by the inductive hypothesis and Corollary \ref{cor fp=coh}, hence $\ker(p_*\phi)$ is of finite type as an $\O_V$-module.  By Corollary \ref{finiteexact} (any definable subspace of $\R^n$ is regular) we have $p_*\ker(\phi)=\ker(p_*\phi)$, and if $p_*\ker(\phi)$ is of finite type as an $\O_V$-module clearly $\ker(\phi)$ is of finite type as an $M$-module.
\end{proof}

\begin{cor}\label{coherenceofobasic}
For a basic definable complex analytic space $X$ the structure sheaf $\O_X$ is a coherent $\O_X$-module.
\end{cor}

\begin{proof}
First, for any definable open $U\subset \C^n$, $\O_U$ is clearly coherent.  Let $X=V(I_X)$ where $U\subset \C^n$ is definable open and $I_X\subset \O_U$ is a finitely generated subsheaf. Let $i:X\ra U$
be the natural injection, which is a closed immersion on locally $\C$-ringed definable spaces. Note that $M\ra i_*M$ gives an equivalence of categories between $\O_X$-modules on $X$ and $\O_U$-modules on $U$ killed by $I$, with inverse $M\ra i^{-1}M$.  By definition $i_*\O_X=\O_U/I_X$ is a finitely presented $\O_U$-module and therefore coherent by Theorem \ref{coherenceofoCn} and Corollary \ref{cor coh is ab}.  

As any open subset of $X$ is itself a basic definable complex analytic space, it is enough to check that for an $\O_X$-module homomorphism $\phi:\O_X^m\ra \O_X$ the kernel is of finite type.  We may consider $i_*\phi:i_*\O_X^m\ra i_*\O_X$ which
is a map of coherent $\O_U$-modules. Since $\O_U$ is coherent, we may locally form an exact sequence
$\O_U^t\ra i_*\O_X^m\ra i_*\O_X$. The first map is killed by $I$, so we get an exact sequence
$i_*\O_X^t\ra i_*\O_X^m\ra i_*\O_X$, and thus an exact sequence 
$\O_X^t\ra \O_X^m\ra \O_X$ as desired.
\end{proof}

\subsection{Analytification}\label{sect an}
\renewcommand{\An}{(-)^\an}
Given a basic definable complex analytic space $X\subset U\subset\C^n$, we may naturally consider $X$ as an analytic space, which we denote $X^\an$.  We for simplicity denote $\Coh(X):=\Coh(\O_X)$ and $\Coh(X^\an):=\Coh(\O_{X^\an})$.  There is a natural morphism $g:(X^\an,\O_{X^\an})\to(\underline{X},\O_X)$ of locally $\C$-ringed sites, and a resulting analytification functor $\An:\Coh(X)\to\Coh(X^\an)$ 
given by $F^\an:=\O_{X^\an}\otimes_{g^{-1}\O_X}g^{-1}F$ together with a natural identification $\O_X^\an\cong\O_{X^\an}$.  

\begin{eg}\label{eg local ring}It is instructive to observe that if the underlying o-minimal structure contains $\R_{\an}$, then $\An:\Coh(X)\to\Coh(X^\an)$ is just sheafification in the analytic topology.  In particular, $\An$ is exact and for any $x\in X$ we canonically have $\O_{X,x}=\O_{X^\an,x}$.
\end{eg}
Given the above example, the more contentful part of the following result is the faithfulness statement.

\begin{thm}\label{basicanfaithful}Let $X$ be a basic definable complex analytic space and $\An:\Coh(X)\to\Coh(X^\an)$ the analytification functor.  Then
\begin{enumerate}
\item $\An$ is exact;
\item $\An$ is faithful.
\end{enumerate}
\end{thm}

For the proof of Theorem \ref{basicanfaithful}, we first need some preliminary observations.

\begin{lemma}\label{noetherian}

For $X$ a basic definable complex analytic space and $x\in X$, the stalk $\O_{X,x}$ is a Noetherian ring.

\end{lemma}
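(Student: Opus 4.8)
\textbf{Proof plan for Lemma \ref{noetherian} (the stalk $\O_{X,p}$ is Noetherian).}

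The plan is to reduce to the basic case and then bootstrap via the Weierstrass division machinery already established in Lemma \ref{Weierstrass}, mimicking the classical proof that the ring of convergent power series $\C\{z_1,\dots,z_n\}$ is Noetherian. First I would reduce to $X$ a basic definable analytic space $V(I)\subset U$ with $U\subset\C^n$ definable open: since $\O_{X,p}$ is a quotient of $\O_{U,p}$ by the stalk of the finitely generated ideal $I$, it suffices to prove $\O_{U,p}$ is Noetherian, and after translating we may take $p=0$. So the real content is that the ring $\O_n := \O_{\C^n,0}$ of germs at $0$ of definable holomorphic functions is Noetherian. I would proceed by induction on $n$, the case $n=0$ being $\C$ itself (and $n=1$ being easy since $\O_1$ is a DVR — a nonzero germ is a unit times a power of $z$, and the power-series valuation is definable).

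For the inductive step, let $\mathfrak{a}\subset\O_n$ be a nonzero ideal and pick $0\neq f\in\mathfrak{a}$. After a linear change of coordinates (which is definable, so stays within the category) I may assume $f$ is $w$-regular of some order $k$, where $w=z_n$ and we write $\C^n=\C^{n-1}_z\times\C_w$. The key step is to invoke a Weierstrass preparation statement in the definable category: $f = u\cdot P$ where $u$ is a definable holomorphic unit germ and $P\in\O_{n-1}[w]$ is a monic (Weierstrass) polynomial of degree $k$. This is precisely the kind of statement whose definability is extracted by the argument of Lemma \ref{Weierstrass} — indeed the proof of Theorem \ref{coherenceofobasic} already uses exactly this: there $P_i(w,u)=\prod_{t\in Y_i\cap\pi^{-1}(u)}(w-t)$ is observed to be definable because the fiber is a definable set, and $f_1/\prod P_i^{k_i}$ is a definable unit. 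So I would record the germ-level Weierstrass preparation theorem for definable holomorphic functions as a consequence of that same circle of ideas (cite Lemma \ref{Weierstrass}, or Kaiser \cite{K}). Then Lemma \ref{Weierstrass} itself gives definable Weierstrass division by $P$: every germ $g\in\O_n$ can be written $g = qP + r$ with $q\in\O_n$ and $r\in\O_{n-1}[w]$ of degree $<k$, uniquely, and with $q,r$ definable.

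With division in hand the classical argument goes through verbatim. From $f=uP\in\mathfrak{a}$ we get $P\in\mathfrak{a}$, so $\mathfrak{a} = P\O_n + (\mathfrak{a}\cap\O_{n-1}[w]_{<k})$ where $\O_{n-1}[w]_{<k}$ denotes polynomials of degree $<k$: given $g\in\mathfrak{a}$, divide $g=qP+r$, note $r=g-qP\in\mathfrak{a}$, and $r$ lies in the free $\O_{n-1}$-module $\O_{n-1}[w]_{<k}$ of rank $k$. By the inductive hypothesis $\O_{n-1}$ is Noetherian, hence so is the finite free module $\O_{n-1}[w]_{<k}$, so its submodule $\mathfrak{a}\cap\O_{n-1}[w]_{<k}$ is finitely generated over $\O_{n-1}$; together with $P$ these generate $\mathfrak{a}$ over $\O_n$. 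Hence $\mathfrak{a}$ is finitely generated and $\O_n$ is Noetherian, completing the induction.

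The main obstacle is the definable Weierstrass preparation theorem at the level of germs — one must be sure that the unit $u$ and the Weierstrass polynomial $P$ extracted from a definable holomorphic $f$ are themselves definable, not merely holomorphic. Everything else is standard commutative algebra. Fortunately this is not genuinely new work here: the definability of $P$ is exactly the computation already carried out (via Lemma \ref{Weierstrass} and \cite[Theorem 2.14]{PS}) inside the proof of Theorem \ref{coherenceofobasic}, and a clean statement is available in Kaiser \cite{K}; so in the write-up I would simply cite that and then run the two-paragraph induction above.
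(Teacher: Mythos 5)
Your proposal is correct and follows essentially the same route as the paper: reduce to $\O_{\C^n,p}$, induct on $n$, and use the definable Weierstrass preparation/division machinery (citing \cite[Theorem 2.14]{PS} and Lemma \ref{Weierstrass}) to pass to $\O_{\C^{n-1},p}$. The only cosmetic difference is that the paper compresses your explicit decomposition $\mathfrak{a}=P\O_n+(\mathfrak{a}\cap\O_{n-1}[w]_{<k})$ into the one-line observation that $\O_{\C^n,p}/(f)$ is finite over the Noetherian ring $\O_{\C^{n-1},p}$, hence Noetherian, and that this suffices since for a domain it is enough that every nonzero principal quotient be Noetherian.
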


\begin{proof}

Suppose $X=V(I)\subset U\subset \C^k$ for $I$ a finitely generated ideal. Then $\O_{X,x}$ is a quotient of $\O_{\C^n,x}$. Thus it is sufficient to prove $\O_{\C^n,x}$ is Noetherian.

We proceed by induction on $n$. Suppose $0\neq f\in\O_{\C^n,x}$.  As in the proof of Theorem \ref{coherenceofoCn}, using Theorem \ref{thm proper coord} we can change coordinates such that $f$ is a unit times a Weierstrass polynomial $P(w)\in\O_{\C^{n-1},x}[w]$. Thus 
$\O_{\C^n,x}/(f)$ is finite over $\O_{\C^{n-1},x}$ by Lemma \ref{Weierstrass}. As a finite extension of a Noetherian ring is Noetherian, the result follows by induction.
\end{proof}

\begin{lemma}\label{isoco}

For $X$ a basic definable complex analytic space and $x\in X$,  the completions of $\O_{X,x}$ and $\O^{\an}_{X,x}$ are canonically isomorphic.

\end{lemma}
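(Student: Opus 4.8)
The plan is to reduce to the basic case and then to a statement about power series rings. Since the claim is local on $X$, I may assume $X=V(I)$ is a basic definable analytic space, with $I$ a finitely generated ideal sheaf on a definable open set $U\subseteq\C^n$, and $p\in X$; passing to a smaller neighbourhood of $p$ does not change the two stalks. There is a natural homomorphism $\O_{X,p}\to\O^{\an}_{X,p}$ coming from the analytification functor on structure sheaves, and I would show that the induced map on $\m$-adic completions is an isomorphism — this is manifestly canonical and independent of the presentation. First I would treat the case $I=0$, i.e.\ compare $R:=\O_{\C^n,p}$ with $B:=\O^{\an}_{\C^n,p}$; the general case then follows by passing to quotients.

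For the case of $R$, after translating assume $p=0$ and write $\underline x=(x_1,\dots,x_n)$. Taylor expansion at $0$ gives a ring homomorphism $\tau\colon R\to\C[[\underline x]]$, which is injective because a definable holomorphic germ vanishing to infinite order at $0$ vanishes identically near $0$. The heart of the argument is the claim that $\tau$ induces isomorphisms $R/\m^t\xrightarrow{\ \sim\ }\C[\underline x]/(\underline x)^t$ for all $t$, where $\m\subset R$ is the maximal ideal. Injectivity is immediate: a polynomial of degree $<t$ lying in $\m^t$ vanishes to order $\ge t$ at $0$, hence is zero. For surjectivity, subtracting from $f\in R$ its Taylor polynomial of degree $<t$ reduces one to showing that a definable holomorphic germ $g$ vanishing to order $\ge t$ at $0$ lies in $\m^t$; I would prove this by induction on $t$, with $t\le 1$ trivial. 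For the inductive step write $g=\sum_{i=1}^n x_i g_i$ with
\[
g_i:=\frac{g(x_1,\dots,x_i,0,\dots,0)-g(x_1,\dots,x_{i-1},0,\dots,0)}{x_i};
\]
each numerator is a definable holomorphic function divisible by $x_i$, so $g_i$ is holomorphic (removable singularities) and definable, its graph being the closure of the graph of a manifestly definable function — exactly as in the proof of Lemma~\ref{Weierstrass}. Since each numerator vanishes to order $\ge t$ and involves only monomials divisible by $x_i$, each $g_i$ vanishes to order $\ge t-1$, so $g_i\in\m^{t-1}$ by induction and $g\in\m^t$. Taking the inverse limit yields $\widehat R\cong\C[[\underline x]]$; the same computation (or the classical fact) gives $\widehat B\cong\C[[\underline x]]$, and since $R\hookrightarrow B$ is compatible with Taylor expansion, the induced map $\widehat R\to\widehat B$ is this very identification, hence an isomorphism.

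For the general basic case I would use that $R$ is Noetherian (Lemma~\ref{noetherian}), so that completion is exact on finitely generated $R$-modules and $\widehat{\O_{X,p}}=\widehat{R/I_p}=\widehat R/I_p\widehat R$; likewise $B$ is Noetherian and $\widehat{\O^{\an}_{X,p}}=\widehat B/I_p\widehat B$. The isomorphism $\widehat R\xrightarrow{\ \sim\ }\widehat B$ is the identity on the subset $I_p$ and so carries $I_p\widehat R$ onto $I_p\widehat B$; passing to quotients gives the desired isomorphism, which agrees with the map induced by the natural homomorphism $\O_{X,p}\to\O^{\an}_{X,p}$ and is therefore canonical.

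I expect the main obstacle to be the key claim that a definable holomorphic germ vanishing to order $\ge t$ at $p$ lies in $\m^t$ — equivalently, that $\m$ is generated by the coordinate germs and that this persists through the induction. Concretely the only delicate point there is the definability of the difference-quotient germs $g_i$, which I would settle by the standard graph-closure trick already used for Lemma~\ref{Weierstrass}; the remaining ingredients (holomorphy of the $g_i$, bookkeeping of orders of vanishing, and exactness of completion over the Noetherian rings $R$ and $B$) are routine.
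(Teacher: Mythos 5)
Your proof is correct and follows the same overall structure as the paper's: identify $\widehat{\O_{\C^n,p}}$ with $\C[[\underline x]]$, then deduce the basic case by passing to quotients. The paper asserts the open case is ``clear''; your inductive argument via telescoping difference quotients (with definability of the $g_i$ secured by the graph-closure trick) is exactly the content that makes it so, and is worth having spelled out — the only nontrivial point being that a definable germ vanishing to order $\ge t$ lies in $\m^t$, which is what your induction establishes. For the basic case the paper detours through flatness of $\O^{\an}_{\C^n,p}$ over $\O_{\C^n,p}$ (via Artin--Rees), whereas you apply exactness of completion over each Noetherian ring separately and then match $I_p\widehat R$ with $I_p\widehat B$ directly under $\widehat R\cong\widehat B$; the ingredients are the same but your formulation is a touch cleaner.
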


\begin{proof}

For $X$ an open set in $\C^n$ the claim is clear since both completions are canonically the formal power series ring $R_n$ in $n$ variables. By the Artin--Rees lemma, it follows that tensoring with $\O^{\an}_{\C^n,x}$
over $\O_{\C^n,x}$ is exact for finitely generated modules.

Suppose $X=V(I)\subset U$. By the above $I_x^{\an}:=I_p\otimes_{\O_{U,x}}\O^{\an}_{U,x}$ is an ideal of $\O^{\an}_{U,x}$, and we have the isomorphisms \[\O_{X,x}\cong \O_{U,x}/I_x, \textrm{ and }\O^{\an}_{X,x}\cong \O^{\an}_{U,x}/I_x^{\an}\]
It follows that the completions of $\O_{X,x}$ and $\O^{\an}_{X,x}$ are both isomorphic to $R_n/(I_x\otimes_{\O_{U,x}} R_n)$. 
\end{proof}

\begin{cor}\label{excellent}
For $X$ a basic definable complex analytic space and $x\in X$, the stalk $\O_{X,x}$ is an excellent ring.
\end{cor}
\begin{proof}
As $\O_{X,x}$ is a quotient of $\O_{\C^n,x}$, it suffices to take $X=\C^n$ \cite[\href{https://stacks.math.columbia.edu/tag/07QU}{Tag 07QU}]{stacks-project}.  The previous two lemmas then imply that $\O_{\C^n,x}$ is regular, since regularity of Noetherian local rings can be checked on completions.  Using \cite[Theorem 102]{matsumura} and the fact that derivatives of definable holomorphic functions are definable, the claim follows. 
\end{proof}

\begin{proof}[Proof of Theorem \ref{basicanfaithful}]

Sheafification in the analytic topology is exact and tensor products are always right exact, so it is sufficient to prove left-exactness of the tensor product. Suppose that $0\ra E \ra F$ is an exact sequence of definable coherent sheaves. Then we get an injection of stalks $0\ra E_x\ra F_x$  for $x\in X$.
To show that $E^{\an}$ injects into $F^{\an}$ it is sufficient to prove that $E^{\an}_x$ injects into $F^{\an}_x$. Note that $E^{\an}_x \cong E_x\otimes_{\O_{X,x}} \O^{\an}_{X,x}$. 
Since both $\O_{X,x}$ and $\O^{\an}_{X,x}$ are Noetherian local rings by Lemma \ref{noetherian}, the completion is faithfully flat \cite[\href{https://stacks.math.columbia.edu/tag/00MC}{Tag 00MC}]{stacks-project}. Since they have isomorphic completions by Lemma \ref{isoco}, claim (1) follows.

For the second part, we need to show that if we have $E\xrightarrow{f} F$ in $\Coh(X)$ such that $f^\an=0$, then $f=0$.  By considering the image, it is enough to show that if for $F\in \Coh(X)$ we have $F^\an=0$, then $F=0$.  The statement is local, so we may assume $F$ has a presentation
\[\O^m_X\xrightarrow{g}\O_X^n\to F\to0\]
and by part (1) we reduce to the following lemma.
\begin{lemma} If $g^\an$ is surjective then $g$ is.
\end{lemma}
\begin{proof}
We may think of $g$ as an $n\times m$ matrix $M$ consisting of elements of $\O_X(X)$. Since $g^{\an}$ admits a section at each point, at each point some $n\times n$ minor of $M$ is invertible.  Thus on a definable cover given by the nonvanishing of these minors, a section is given by a rational function in the entries of $g$, which is therefore definable.  It follows that $g$ is surjective.
\end{proof}
\end{proof}

\begin{cor}\label{cor basic exact on stalk}For $X$ a basic definable complex analytic space, a sequence $M'\to M\to M''$ of coherent $\O_X$-modules is exact if and only if it is exact on stalks (or even analytic stalks).
\end{cor}
\begin{proof}  By the exactness of $\An$, it suffices to show that if $M^\an=0$ then $M=0$, but this is exactly the faithfulness of $\An$.
\end{proof}

\begin{cor}\label{instalks}\label{cor is zero} Given coherent sheaves $E\subset F$ and a section $s\in F(X)$, then $s\in E(X)$ if and only if $(s^\an)_x\in(E^\an)_x$ for all $x\in X$.
\end{cor}

In view of Remark \ref{rmk: no points} (and Example \ref{eg: no points}), Corollary \ref{cor basic exact on stalk} is quite strong.  In particular, it implies that basic definable complex analytic spaces can be glued as locally $\C$-ringed definable spaces:
\begin{cor} \label{cor sub ringed} Let $X\subset U\subset \C^n$ and $Y\subset V\subset \C^m$ be basic definable complex analytic spaces and $\phi:(X,\O_X)\to(Y,\O_Y)$ a morphism of the associated locally $\C$-ringed definable spaces.  Then there are definable open subsets $U_j\subset U$ covering $X\subset U$ and definable holomorphic maps $g_j:U_j\to V$ with $g_j^\sharp (g_j^{-1}I_Y)\subset I_{X\cap U_j}$ which induce $\phi|_{X\cap U_j}$ in the natural way.
\end{cor}
\begin{proof}  The coordinates give sections $z_1,\ldots,z_m$ of $\O_Y(Y)$ which pull back to functions $w_i=\phi^\sharp z_i\in \O_X(X)$.  By the definition of $\O_X$, there are open definable subset $U_j\subset U$ covering $X$ on which all of the $w_i$ extend to definable holomorphic functions.  Replacing $X\subset U$ with $X\cap U_j\subset U_j$, we may therefore assume the sections $w_i$ lift to the coordinates of a definable holomorphic function $g:U\to \C^m$.  From classical theory we know that $g$ restricts to an analytic morphism $X^\an\to Y^\an$ which induces $\phi^\an$.  Thus, on the level of definable topological spaces $g$ induces $\phi$.  Moreover, from Corollary \ref{instalks} we have that $g^\sharp(g^{-1}I_Y)\subset I_X$, and it remains to show that the induced pullback map $g^\sharp:|\phi|^{-1}\O_Y\to\O_X$ is the same as $\phi^\sharp$.  But on the one hand by Lemmas \ref{noetherian} and \ref{isoco} both pullbacks agree on completions since they agree on the coordinates $z_i$, and therefore they also agree on stalks.  On the other hand, sections are determined by their stalks by Corollary \ref{cor is zero}, so the lemma is proved.  
\end{proof}

\subsection{Definable complex analytic spaces}  Equipped with Corollary \ref{cor sub ringed}, we are in a position to give a concise definition of global spaces locally modeled on basic definable complex analytic spaces.

\begin{defn}We say a locally $\C$-ringed definable spaces $(X,\O_X)$ is \emph{locally a basic definable complex analytic space} if on a definable cover it is isomorphic to the locally $\C$-ringed definable space associated to a basic definable complex analytic space.  We define the category of \emph{definable complex analytic spaces} $(\mbox{DefAnSp}/\C)$ to be the full subcategory of the category of locally $\C$-ringed definable spaces consisting of $(X,\O_X)$ which are locally a basic definable complex analytic space.
\end{defn}

\begin{remark}
We require the underlying definable topological space $X$ to be Hausdorff.  In particular, as $X$ is locally compact (as it is locally a locally closed subset of $\R^n$), it is regular.  
\end{remark}

\begin{remark}\label{rmk def closed immersions}
As in Remark \ref{rmk closed immersions}, we define closed (resp. open) immersions of definable complex analytic spaces to be closed (resp. open) immersions on the level of $\C$-ringed definable spaces.
\end{remark}
The local results of the previous sections immediately globalize; we record them here for convenience.
\begin{thm}\label{coherenceofo}Let $X$ be a definable complex analytic space.  Then $\O_X$ is a coherent $\O_X$-module.
\end{thm}
Denote by $(\mbox{AnSp}/\C)$ the category of complex analytic spaces.  As in the previous section, there is naturally an analytification functor $(-)^\an:(\mbox{DefAnSp}/\C)\to(\mbox{AnSp}/\C)$, as well as analytification functors $(-)^\an:\Coh(X)\to\Coh(X^\an)$ on the level of sheaves for which we have a natural identification $\O_X^\an\cong\O_{X^\an}$.
\begin{thm}\label{anfaithful}Let $X$ be a definable complex analytic space.  Then the analytification functor $(-)^\an:\Coh(X)\to\Coh(X^\an)$ is exact and faithful.
\end{thm}
\begin{cor}\label{cor exact on stalk}For $X$ a definable complex analytic space, a sequence $M'\to M\to M''$ of coherent $\O_X$-modules is exact if and only if it is exact on stalks (or even analytic stalks).
\end{cor}

Finally, as a concrete example and sanity check, we have the following:

\begin{lemma}\label{sectionsaremapstoc}

Let $X$ be a definable complex analytic space. Then elements of $\Gamma(X,\O_X)$ are in natural bijection with morphisms of definable complex analytic spaces $f:X\ra\C$.

\end{lemma}

\begin{proof}

Given a morphism $f:X\ra\C$ we get a map $f^\#:\Gamma(\C,\O_\C)\ra \Gamma(\C,f_*\O_X)$ and we pullback the $\C$-coordinate
$f^\#(z)$ to obtain a global section of $\O_X$. 

We now define the inverse correspondence from sections $s\in\Gamma(X,\O_X)$ to morphisms $s^+:X\to \C$.  It is enough to consider $X$ a basic definable complex analytic space, as the resulting morphisms of definable complex analytic spaces $X\to \C$ glue together.  Thus suppose $X=V(I)$ where $I$ is a finitely generated ideal sheaf in a definable open set $U\subset\C^n$.
Given $s\in\Gamma(X,\O_X)$, after passing to a definable cover $s$ extends to a section $t\in \Gamma(U,\O_U)$, and thus to a morphism of definable complex analytic spaces $t^+:U\ra\C$
which restricts to a morphism $s^+:X\ra \C$.  Note that if we pick a different section lift $t'$ then $t-t'\in \Gamma(U,I)$ and we obtain the same morphism.  To see this, note that it is obvious that $t^+,t'^+$ give the same map on points $|X|\ra\C$. As $t^+,t'^+$ induce the same analytic morphism, it follows from Theorem \ref{anfaithful} that they induce the same map on the sheaf of rings.  One easily check that $s\mapsto s^+$ is inverse to $f\mapsto f^\sharp z$.
\end{proof}

\subsection{Reduced spaces}\label{S reduced spaces}
The goal of this section is to show that any definable complex analytic space $X$ has a canonical reduced subspace $X^\reduced$ which analytifies to the analytic reduced subspace of $X^\an$. 
\begin{defn}For $X$ a definable complex analytic space, we define $\mathcal{N}_X\subset \O_X$ to be the sheaf of ideals given by nilpotent elements of $\O_X$.  We say $X$ is reduced if $\mathcal{N}_X$ is the zero ideal.
\end{defn}
Note that for any $x\in X$, the stalk $\mathcal{N}_{X,x}$ is the ideal of nilpotents of $\O_{X,x}$.

\begin{prop}\label{red coh}
Let $X$ be a definable complex analytic space.  Then $\mathcal{N}_X$ is a coherent sheaf of ideals.
\end{prop}

\begin{proof}
We may assume $X\subset U\subset \C^n$ is a basic definable complex analytic space.  The underlying set $|X|\subset U$ is set-theoretically cut out by generators for the ideal of $X$ in $U$, so $|X|$ is a $\C$-analytic set in the terminology of \cite{PS}.  Let $I\subset \O_U$ be the ideal sheaf of $|X|$; it suffices to prove that $I$ is a coherent sheaf of ideals.  By \cite[Theorem 11.1]{PS}, up to a definable cover there is a finitely generated ideal sheaf $J\subset I\subset\O_U$ which agrees with $I$ on stalks. By Corollary \ref{cor exact on stalk} we have $J=I$.
\end{proof}

\begin{cor}\label{cor reducedstructure}
Let $X$ be a definable complex analytic space.  There is a unique closed definable complex analytic subspace $X^\reduced\subset X$ for which $(X^\reduced)^\an=(X^\an)^\reduced$.  Moreover, $X^\reduced$ is reduced.
\end{cor}
\begin{proof} The uniqueness follows from Theorem \ref{anfaithful}.  For the existence take $X^\reduced=V(\mathcal{N}_X)$, which is clearly reduced.  Recall that an excellent local ring is reduced if and only the completion is \cite[7.8.3(v)]{egaiv}.  From Corollary \ref{excellent}, Lemma \ref{isoco}, and the excellence of analytic local rings we deduce that $\mathcal{N}_{X,x}$ analytifies to the ideal $\mathcal{N}_{X^\an,x}\subset\O_{X^\an,x}$ of nilpotents in the analytic local ring.  Thus, $(X^\reduced)^\an=(X^\an)^\reduced$.  
\end{proof}
We call the subspace $X^\reduced\subset X$ of the corollary the reduced subspace.  For the rest of this section and subsequently, by a closed definable complex analytic subset $\mathcal{Y}\subset X$ of a definable complex analytic space $X$ we mean a subset $\mathcal{Y}\subset X$ on the level of points which is simultaneously a closed analytic subset of $X^\an$ and a definable subset of the definable topological space underlying $X$.

\begin{prop}\label{defanset}
Let $X$ be a definable complex analytic space and $\mathcal{Y}\subset X$ a closed definable complex analytic subset.  Then $\mathcal{Y}$ canonically has the structure of a reduced closed definable complex analytic subspace $Y\subset X$.  
\end{prop}

\begin{proof}  By Corollary \ref{cor reducedstructure}, it suffices to find a closed definable analytic subspace $Y'\subset X$ whose underlying definable topological space is $\mathcal{Y}$.  We may assume $\mathcal{Y}$ is equidimensional by passing to irreducible components.  By passing to definable covers, we may first assume that $X=U\subset \C^n$ is a definable open subset of $\C^n$ and then by Lemma \ref{thm proper coord} that there are linear coordinates $\C^n\cong \C^{n-d}\times\C^d$ for which projection to the second factor $\pi:U\to\C^{d}$ restricts to a finite map $p:\mathcal{Y}\to V$ where $V:=\pi(U)\subset\C^{d}$.  

As $p$ is analytically \'etale over a dense open subset $V_0\subset V$, after possibly passing to connected components (of $V$) we obtain a definable holomorphic map $f_0:V_0\to\Sym^k\C^{n-d}$ mapping $v\mapsto p^{-1}(v)\subset \C^{n-d}$ whose image is contained in the complement $W\subset\Sym^k\C^{n-d}$ of the diagonals.  Note that $\Sym^k\C^{n-d}$ is an affine complex algebraic variety and therefore naturally a definable complex analytic space.  Let $Z\subset \C^{n-d}\times \Sym^k\C^{n-d}$ be the closure of the universal reduced length $k$ subscheme of $\C^{n-d}$ over $W$, which is also naturally an affine complex algebraic variety.  The coordinate functions of $\Sym^k\C^{n-d}$ are clearly locally bounded around $V\setminus V_0$, so $f_0$ extends to a definable holomorphic function $f:V\to \Sym^k\C^{n-d}$, and the base-change of $Z$ along $f$ yields the desired $Y'$. 
\end{proof}

\subsection{Noetherian induction and the Nullstellensatz}\label{subsectnoether}

\begin{prop}[Definable Noetherian induction]\label{induction}

Let $X$ be a definable complex analytic space and $F$ a coherent sheaf on $X$. Any increasing chain of coherent subsheaves of $F$ must stabilize.

\end{prop}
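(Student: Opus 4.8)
## Proof plan for Definable Noetherian Induction

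\textbf{Overall strategy.} The plan is to reduce the statement to a local, stalk-level Noetherianity statement — which we have available from Lemma \ref{noetherian} — together with a finiteness argument that lets us pass from local stabilization to global stabilization. The obstruction to naively doing this is that a coherent sheaf can fail to have "enough global sections" on a definable analytic space, so one cannot simply test containment of subsheaves on global sections. Instead, the key is that the definable site has only \emph{finite} admissible covers, which is exactly the kind of finiteness that should convert pointwise/local information into global information.

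\textbf{Step 1: Reduce to a quasi-compactness statement via stalks.} Suppose $F_1\subseteq F_2\subseteq\cdots\subseteq F$ is an increasing chain of coherent subsheaves. For each point $p\in X$, the stalks $(F_i)_p$ form an increasing chain of submodules of the finitely generated module $F_p$ over the Noetherian local ring $\O_{X,p}$ (Lemma \ref{noetherian}), so this chain stabilizes: there is $N(p)$ with $(F_i)_p=(F_{N(p)})_p$ for all $i\geq N(p)$. The goal is to find a single $N$ that works for all $p$ simultaneously; then $F_i=F_N$ for all $i\geq N$ as a consequence of Corollary \ref{instalks} (containment of coherent subsheaves can be checked on stalks — applied to $F_{i}\subseteq F_{i+1}$, or more directly: $F_i$ and $F_N$ are two coherent subsheaves of $F$ with the same stalks everywhere, hence equal, since the inclusion $F_N\hookrightarrow F$ lands in $F_i$ by Corollary \ref{instalks}).

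\textbf{Step 2: Spread out local stabilization and use finiteness of covers.} Fix $p$ and $N=N(p)$. I claim that the equality $(F_i)_p=(F_N)_p$ propagates to a definable open neighborhood of $p$ — at least for each fixed $i$. Indeed, the quotient $F_i/F_N$ (which is coherent by Lemma \ref{basiccoherence}) has vanishing stalk at $p$, and since it is locally finitely generated, a finite generating set of sections near $p$ all have germs dying at $p$, hence die on a definable open neighborhood; thus $F_i=F_N$ there. The difficulty is that a priori this neighborhood shrinks with $i$. To handle this I would argue as follows: it suffices to prove the proposition assuming $X$ is a basic definable analytic space (the general case follows by taking a finite cover by basics and taking the max of the finitely many resulting bounds). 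On a basic $X=V(I)\subset U\subset\C^n$, consider the "conductor" or support of $F/F_N$ type arguments — more cleanly, invoke the \emph{definable} finiteness: the increasing union $\bigcup_i F_i$ is a subsheaf of $F$, and one wants it coherent (hence equal to some $F_N$). The honest route here is Noetherian induction on the support, paralleling the structure already used repeatedly in this section (e.g. the proof of Theorem \ref{coherenceofobasic}): stratify $X$ by a definable cell decomposition (Proposition \ref{cell decomp}) compatible with the relevant supports, and on the open dense stratum the stabilization is controlled by the generic stalk; then descend to lower-dimensional strata, where by induction on dimension the chain on that closed piece stabilizes, and finitely many strata give a uniform $N$.

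\textbf{Main obstacle.} The crux — and where I expect to spend the real effort — is Step 2: making "local stabilization spreads out" uniform in $i$. The clean way is presumably a Noetherian-induction-on-a-definable-stratification argument rather than any compactness-of-$X$ statement (definable analytic spaces need not be "compact" in any useful sense). One extracts from Lemma \ref{reducedstructure} / \cite[Theorem 11.1]{PS} and the coherence machinery of this section that the relevant "loci where the chain has not yet stabilized" form a descending chain of closed definable analytic subsets, and descending chains of closed definable sets stabilize by o-minimality (dimension plus number-of-components is a well-founded invariant); combining with the finiteness of the stratification yields the uniform bound $N$, completing the proof.
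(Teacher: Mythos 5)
Your Step 1 is correct and uses the right ingredients (stalk Noetherianity from Lemma \ref{noetherian}, containment checked on stalks via Corollary \ref{instalks}). But Step 2, the crux you yourself identify, has a genuine gap. First, the "loci where the chain has not yet stabilized by time $k$" are of the form $\bigcup_{i>k}\Supp(F_i/F_k)$ — an \emph{infinite} union of closed definable analytic sets — and you give no argument that these are themselves closed definable analytic subsets, which is what you would need to apply any chain-condition on such subsets. Second, and more seriously, the fact you lean on — that descending chains of closed definable analytic sets stabilize — is precisely Corollary \ref{setinduction} in the paper, which is \emph{derived from} Proposition \ref{induction} (via the $\Supp$ lemma and the chain-of-ideals argument). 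So inside the paper's logical architecture your argument is circular. Your parenthetical "by o-minimality (dimension plus number-of-components is a well-founded invariant)" does not rescue this: that slogan is false for general closed definable sets (e.g.\ $[n,\infty)\subset\R$), and for closed definable analytic sets the needed finiteness of the irreducible decomposition and the descent of the dimension-vector is itself a nontrivial statement that you would have to prove rather than invoke.

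The paper's proof takes a genuinely different and more structured route that sidesteps all of this. After the same reductions to $X\subset\C^n$ open and $F=\O_X$, the paper inducts on $n$: passing to a cover where all ideals $I_j$ contain a common $f$, it applies Weierstrass preparation (Lemma \ref{Weierstrass} plus \cite[Theorem 2.14]{PS}) to arrange $Y=V(f)$ finite over $U\subset\C^{n-1}$ with a common Weierstrass polynomial $P$ in all $I_j$, and then considers the chain $Q_j=I_j/P\O_X$ of coherent sheaves on $Y$. The finite pushforward $\pi_*$ along $\pi:Y\to U$ is exact (Corollary \ref{finiteexact}) and preserves coherence (Lemma \ref{finitewexact}), so the chain $\pi_*Q_j$ lives on $\C^{n-1}$ and stabilizes by induction; exactness and a stalk computation (plus Theorem \ref{anfaithful}) then pull the stabilization back. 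In other words, the paper converts the problem into a lower-dimensional one by a \emph{finite map}, rather than trying to spread out pointwise stabilization uniformly. If you want to salvage your stratification idea, you would need to replace the appeal to Corollary \ref{setinduction} with an independent well-foundedness argument and, more importantly, make the non-stabilization loci into honest coherent/definable-analytic objects — and at that point you are essentially reconstructing the paper's dimension-reduction machinery.
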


\begin{proof}

It is enough to prove the statement on every open of a definable cover.  As $F$ is locally a quotient of $\O_X^m$, by pulling back our chain we may assume $F=\O_X^m$. The statement for $\O_X^m$ clearly follows from the statement for $\O_X$ so we may assume $F=\O_X$.  We may take $X$ to be a basic definable complex analytic space, and then as $\O_X$ is a quotient of $\O_{\C^n}$ we assume $U\subset \C^n$ is an open definable set.

We now induct on $n$ to show the claim for $\O_U$ for $U\subset \C^n$ open.  Our chain of definable coherent subsheaves corresponds to a chain of ideal sheaves $I_j$.  We may assume after passing to a further cover that all of the $I_j$ contain a function $f\in \O_U(U)$.  As in the proof of Theorem \ref{coherenceofoCn}, we may assume we have a linear projection $\pi:\C^n\to\C^{n-1}$ with $V=\pi(U)$ and that $f=P\in\O(V)[w]$ is a Weierstrass polynomial with zero locus $X=V(P)\subset U$ such that $p=\pi|_X:X\to V$ is finite.  Letting $Q_j=I_j/P\O_X$, the $Q_j$ are coherent sheaves supported on $X$ and it is sufficient to show that the $Q_j$ stabilize.

\begin{lemma}\label{finitewexact}

With the above notation, the pushforward map $p_*$  takes coherent sheaves to coherent sheaves.

\end{lemma}

\begin{proof}

By Lemma \ref{Weierstrass} we know that $p_*\O_X\cong \O_V^{\deg P}$. Let $Q$ be a coherent sheaf. This means that $Q$ has a finite presentation on a definable open cover, and by Proposition \ref{propercover} we may assume $Q$ has a global finite presentation.  By Corollary \ref{finiteexact} this yields a presentation of $p_*Q$.
\end{proof}

By induction, the sequence $p_*Q_j$ stabilizes. The theorem will thus follow if we show that $p_*Q_j=p_*Q_{j+1}$ implies that $Q_j=Q_{j+1}$. By Corollary \ref{finiteexact}
the pushforward $p_*$ is exact, and thus it suffices to show that for a coherent sheaf $Q$, $p_*Q=0$ implies that $Q=0$.  This easily follows from Proposition \ref{propercover}.
\end{proof}

\def\sHom{\mathcal{H}\mathrm{om}\,}

\begin{lemma}\label{lem supp}Let $X$ be a definable complex analytic space and $F,F'$ definable coherent sheaves on $X$.  Then $\sHom_{\O_X}(F,F')$ is a definable coherent sheaf.  Moreover, if $\Supp(F)$ is the subspace cut out by the kernel of the natural map $\O_X\to\sHom_{\O_X}(F,F)$, then:
\begin{enumerate}
\item $\Supp(F)^\an=\Supp(F^\an)$.
\item The underlying definable complex analytic set of $\Supp(F)$ is the set of $x\in X$ for which $F_x\neq 0$.  
\end{enumerate}
\end{lemma}
\begin{proof}For the first claim, we may assume $F$ has a presentation $$\O_X^n\xrightarrow{g}\O_X^m\to F\to0$$ in which case we have an exact sequence
\[0\to \sHom_{\O_X}(F,F')\to \sHom_{\O_X}(\O_X^m,F')\to\sHom_{\O_X}(\O_X^n,F')\]
and therefore $\sHom_{\O_X}(F,F')$ is coherent.  The remaining parts of the lemma follow from Theorem \ref{anfaithful} and the same statements in the analytic category. 
\end{proof}

\begin{cor} \label{setinduction} Let $X$ be a definable complex analytic space.  
\begin{enumerate}
\item
Any decreasing chain of closed definable complex analytic subspaces stabilizes.  
\item
Any decreasing chain of closed definable complex analytic sets stabilizes.
\end{enumerate}
\end{cor}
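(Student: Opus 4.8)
The plan is to reduce both statements to Definable Noetherian induction (Proposition \ref{induction}) applied to the coherent sheaf $\O_X$, which is coherent by Oka coherence (Theorem \ref{coherenceofo}). Throughout I would use that a chain of subobjects stabilizes globally as soon as it stabilizes on each member of a finite definable cover, so that we may always pass to a cover of $X$ by basic definable analytic spaces, and then (after enlarging the ambient space) to the case $X\subset\C^n$ is an open definable set.

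For (1), I would observe that a closed definable analytic subspace $Z\subset X$ is the same data as its ideal sheaf $I_Z\subset\O_X$, which is locally finitely generated and hence, since $\O_X$ is coherent, is itself a coherent subsheaf of $\O_X$. This correspondence is order-reversing: a decreasing chain $Z_1\supseteq Z_2\supseteq\cdots$ of closed definable analytic subspaces corresponds to an increasing chain $I_{Z_1}\subseteq I_{Z_2}\subseteq\cdots$ of coherent subsheaves of $\O_X$. After reducing to $X$ basic as above, this chain of ideal sheaves stabilizes by Proposition \ref{induction}, and since $Z=V(I_Z)$ recovers the subspace from its ideal sheaf, the original chain of subspaces stabilizes as well.

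For (2), given a decreasing chain $\mathcal{Y}_1\supseteq\mathcal{Y}_2\supseteq\cdots$ of closed definable analytic sets, I would pass to a finite cover to assume $X\subset\C^n$ is open definable and attach to each $\mathcal{Y}_j$ the ideal sheaf $I_j\subset\O_X$ of definable holomorphic functions vanishing on $\mathcal{Y}_j$, which is coherent by Lemma \ref{reducedstructure}. Since $\mathcal{Y}_{j+1}\subseteq\mathcal{Y}_j$ we get an increasing chain $I_1\subseteq I_2\subseteq\cdots$ of coherent subsheaves of $\O_X$, which stabilizes by Proposition \ref{induction}. To conclude I would note that $\mathcal{Y}_j$ is exactly the zero locus $V(I_j)$: the inclusion $\mathcal{Y}_j\subseteq V(I_j)$ is immediate, and conversely any point outside $\mathcal{Y}_j$ lies outside the vanishing locus of one of the finitely many local definable holomorphic equations cutting out $\mathcal{Y}_j$, each of which lies in $I_j$. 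Hence stabilization of the $I_j$ forces stabilization of the $\mathcal{Y}_j$. (Equivalently, one can apply part (1) to the reduced induced structures $Y_j$ from the Reduced spaces subsection, which form a decreasing chain of closed definable analytic subspaces precisely because functions vanishing on $\mathcal{Y}_j$ vanish on $\mathcal{Y}_{j+1}$.)

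I do not expect a genuine obstacle: the content is entirely contained in Proposition \ref{induction} and Lemma \ref{reducedstructure}, both already proved. The only points needing a little care are the bookkeeping that stabilization on finitely many charts gives global stabilization, the remark that a closed definable analytic subspace corresponds to a \emph{coherent} — not merely arbitrary — ideal sheaf so that Proposition \ref{induction} applies, and, in the reduced case, the identification of $\mathcal{Y}_j$ with the set-theoretic zero locus of the coherent ideal of functions vanishing on it.
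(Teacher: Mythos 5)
Your proof is correct and takes essentially the same route as the paper. Part (1) is the same in both: a decreasing chain of closed definable analytic subspaces corresponds to an increasing chain of coherent ideal sheaves (coherence because the ideal is locally finitely generated inside the coherent sheaf $\O_X$), and Proposition \ref{induction} applies. For part (2), the paper reduces to (1) by endowing each set with its reduced induced structure and recovering the set from its ideal sheaf $I_Y$ via the support lemma (the underlying set is $\Supp(\O_X/I_Y)$); you give this as your parenthetical alternative, and your primary argument — applying Proposition \ref{induction} directly to the chain of reduced ideal sheaves from Lemma \ref{reducedstructure} and then verifying $\mathcal{Y}_j = V(I_j)$ — is equivalent content. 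One small remark: Proposition \ref{induction} is stated for arbitrary definable analytic spaces, so the reductions to a basic chart that you invoke in a few places are not actually needed; they are harmless but superfluous.
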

\begin{proof}  For (1), consider the corresponding chain of ideals.  This also handles (2), by endowing the subsets with the reduced induced structure provided by Proposition \ref{defanset}.  Note that by the lemma a definable complex analytic set $\mathcal{Y}$ may be recovered by the ideal sheaf $I_Y$ defining the subspace $Y$ with the reduced induced structure as the underlying set of $\Supp(\O_X/I_Y)$.
\end{proof}

We therefore deduce a definable Nullstellensatz:
\begin{cor}\label{nullstellensatz} Let $X$ be a definable complex analytic space and $I_{X^\reduced}\subset \O_X$ the ideal sheaf of the reduced subspace $X^\reduced\subset X$.  Then $I_{X^\reduced}^n=0$ for some integer $n>0$.
\end{cor}
\begin{proof} For each $x\in X$ we have $I_{X^\reduced,x}^n=0$ for some $n$, since $\O_{X,x}$ is Noetherian.  By the previous lemma, for any inclusion of definable coherent sheaves $E\subset E'$ on $X$ we have $\Supp(E)\subset \Supp(E')$.  Thus, $\Supp(I_{X^\reduced}^k)$ gives a decreasing chain of definable complex analytic subspaces which must eventually not contain any given point.  Therefore, by Corollary \ref{setinduction} we have that $\Supp(I_{X^\reduced}^k)$ is eventually empty, and thus by the lemma $I_{X^\reduced}^n=0$ for some positive integer $n$.
\end{proof}
\begin{cor}Let $X$ be a definable complex analytic space and $Z\subset X$ a closed definable complex analytic subspace.  Then for some integer $n>0$ we have $I_{Z^\reduced}^n\subset I_Z$.
\end{cor}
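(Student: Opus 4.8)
The plan is to deduce this relative Nullstellensatz directly from the absolute version, Corollary~\ref{nullstellensatz}, by applying the latter to the definable analytic space $Z$ itself (a definable analytic subspace of $X$ is in particular a definable analytic space, so the corollary applies to it).

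First I would fix notation: write $\O_Z=\O_X/I_Z$ for the structure sheaf of $Z$, viewed as a sheaf on the underlying definable space of $Z$, and let $q\colon\O_X\to\O_Z$ be the quotient map, so $I_Z=\ker q$. The reduced subspace $Z^{\reduced}\subset Z$ is defined as in the subsection on reduced spaces; by the functoriality of the reduced induced structure recorded there, applied to the closed immersion $Z\hookrightarrow X$, the reduced structure carried by the definable analytic set underlying $Z$ as a subspace of $Z$ agrees with the one it inherits as a subspace of $X$. Hence the ideal sheaf of $Z^{\reduced}$ inside $Z$ is precisely $\overline{I}:=q(I_{Z^{\reduced}})$, the image under $q$ of the ideal $I_{Z^{\reduced}}\subset\O_X$ (note $I_Z\subseteq I_{Z^{\reduced}}$, so $\overline{I}=I_{Z^{\reduced}}/I_Z$).

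Next I would invoke Corollary~\ref{nullstellensatz} for the space $Z$: there is an integer $n$ with $\overline{I}^{\,n}=0$ in $\O_Z$, and this $n$ is global, the proof of that corollary producing it from the stabilization of the descending chain of supports via Corollary~\ref{setinduction}. Since $q$ is a surjection of sheaves of rings, $\overline{I}^{\,n}$ is exactly the image of $I_{Z^{\reduced}}^n$ under $q$, so the vanishing $\overline{I}^{\,n}=0$ is equivalent to $I_{Z^{\reduced}}^n\subseteq\ker q=I_Z$, which is the assertion.

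I do not expect a genuine obstacle here: the only point that must be checked with any care is the identification of the reduced structure on $Z$ as a subspace of $Z$ with the one induced from $X$, and this is precisely the functoriality already established in the reduced-spaces subsection; everything else is bookkeeping about images of ideal sheaves under a quotient map of structure sheaves.
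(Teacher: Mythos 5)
Your argument is correct and is precisely the intended proof: the paper states this as an immediate corollary of Corollary~\ref{nullstellensatz}, and the natural route — apply the Nullstellensatz to $Z$ itself, identify the ideal of $Z^{\reduced}$ in $Z$ with $q(I_{Z^{\reduced}})=I_{Z^{\reduced}}/I_Z$ via the functoriality of the reduced induced structure, then use that $q(I_{Z^{\reduced}}^n)=q(I_{Z^{\reduced}})^n$ — is exactly what you do. The bookkeeping about the quotient map and the reduced structure is handled correctly.
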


 \subsection{Finite push-forward}
\begin{prop}\label{finitepush}
Let $f:X\to Y$ be a finite morphism of definable complex analytic spaces.  Then $f_*$ takes coherent $\O_X$-modules to coherent $\O_Y$-modules and commutes with analytification.
\end{prop}
\begin{proof}
Let $X_0\subset X$ be a closed definable complex analytic subspace with a square-zero ideal.  For any coherent $\O_X$-module $F$ we have a short exact sequence
\[0\to E\to F\to F_0\to0\]
where $F_0$ is the restriction of $F$ to $X_0$ and both $E$ and $F_0$ are coherent $\O_{X_0}$-modules.  If $f_*E$ and $f_*F_0$ are coherent $\O_Y$-modules which analytify to $(f^\an)_*E^\an$ and $(f^\an)_*F_0^\an$, then $f_*F$ is coherent and analytifies to $(f^\an)_*F^\an$ by Corollary \ref{finiteexact} and Theorem \ref{anfaithful}.  Therefore by induction using Corollary \ref{nullstellensatz} we may assume $X$ is reduced.  As $X^\reduced\to Y$ factors through the reduction of $Y$ and the claim is obviously true for closed immersions, we may assume $Y$ is reduced as well.

Likewise, for any sheaf $F$ and any irreducible component $X_0$ of $X$ (with its reduced structure), we have a short exact sequence
\[0\to E\to F\to F_0\to0\]
where $F_0$ is the restriction of $F$ to $X_0$ and $E$ has support a subspace supported on the union of the other irreducible components of $X$.  By induction we may thus assume that $X$ (and therefore also $Y$) is reduced and irreducible.

Using Proposition \ref{propercover}, we may assume $X\subset U\subset\C^m$ and $Y\subset V\subset \C^n$ are both basic definable complex analytic spaces.  By considering the graph, we reduce to the following:
\begin{claim}
Let $Y\subset V\subset \C^n$ be a reduced and irreducible basic definable complex analytic space  
 and $X\subset \C^m\times Y$ a reduced and irreducible closed definable complex analytic subspace.  Assume the second projection $p:X\to Y$ is proper.  Then $p_*$ takes coherent sheaves to coherent sheaves and commutes with analytification.\end{claim}
 \begin{proof}
 We proceed by induction on $m$, the base case being trivial.  Take a linear projection $\C^m\to\C^{m-1}$ and consider the morphism $g:X\to\C^{m-1}\times Y$.  As $g$ is proper, by Remmert's theorem the image of $g^\an$ is a closed complex analytic subvariety and obviously definable, hence by Proposition \ref{defanset} the image canonically has the structure of a reduced and irreducible closed complex analytic subspace $Y'\subset \C^{m-1}\times Y$.  By induction, the claim is true for the push-forward along $Y'\to Y$, so it is enough to show that push-forward along $X\to Y'$ sends coherent sheaves to coherent sheaves and commutes with analytification.  As $X\subset\C\times Y'$ is a closed definable complex analytic subspace which is proper over $Y'$, we are reduced to the following:
 \begin{lemma}
 Let $Y$ be a reduced and irreducible basic definable complex analytic space and $X\subset \C\times Y$ a reduced and irreducible closed complex analytic subspace which maps finitely and surjectively onto $Y$ via the second projection $\pi:X\to Y$.  Then $\pi_*$ takes coherent sheaves to coherent sheaves.  
 \end{lemma}
 \begin{proof}
 Let $w$ be the $\C$-coordinate in $\C\times Y$. Since $X,Y$ are irreducible and $\pi$ is surjective, the number $d$ of pre-images (with multiplicity) is constant, and $w$
is a root of the polynomial $P(t):=\prod_{(s,y)\in X} (t-s)\in \O_Y(Y)[t]$.  

Let $W$ be the analytic subspace cut out by $P$ and $\psi:W\to Y$ the projection. We claim that $\psi_*\O_W$ is free over $\O_Y$. When $Y$ is a domain in $\C^n$, this follows from 
Lemma \ref{Weierstrass} and Proposition \ref{propercover}. In the general case, we have to prove that every function $g$ on $W$ can uniquely be written as a polynomial in $w$ of degree $d-1$ over $\O_Y$.

To show existence, note that we can find a neighborhood $V$ of $Y$ which is open in $\C^n$ such that $P$ extends to $V$ and cuts out a definable complex analytic space $W_V$. Shrinking further
and using Proposition \ref{propercover} we may assume that $g$ extends to $W_V$, and so it can be written as a polynomial in $w$ of degree $d-1$ over $\O_V$. Restricting to $Y$ proves existence.
Uniqueness is true in the analytic category (see e.g. \cite[p.56]{Grauert}) so follows from Theorem \ref{anfaithful}.

As $\psi_*\O_W\cong\O_Y^d$, it follows that $\psi_*$ takes coherent sheave to coherent sheaves (as in Lemma \ref{finitewexact}) and commutes with analytification.  The same is obviously true for push-forward along the closed embedding $X\to W$, an therefore also for the composition $\pi_*$.
 \end{proof}
 \end{proof}
\end{proof}
\begin{cor}\label{finiteimage}
Let $f:X\to Y$ be a finite morphism of definable complex analytic spaces.  Then there is a diagram of definable complex analytic spaces
\[\begin{tikzcd}
 X\ar[rd,swap,"g"]\ar[rr,"f"]&&Y\\
 &Z\ar[ru,swap,"i"]&
\end{tikzcd}\]
where $i$ is a closed immersion, $g$ is surjective on points, and $\O_Z\to g_*\O_X$ is injective.  Moreover, $Z$ analytifies to the analytic image.
\end{cor}
\begin{proof}
The ideal of $Z$ is the kernel of the map $\O_Y\to f_*\O_X$ which is coherent by the proposition.  The remaining statements are clear.
\end{proof}

\subsection{Analytic factorization.}

The purpose of this section is to prove the following factorization statement, which says that if $h:X\to Y$ is a ``scheme"-theoretically surjective morphism of definable complex analytic spaces, then a morphism $g:X\to Z$ factors through $h$ if and only if it factors analytically.  

\begin{prop}\label{prop mapdescent}Let $X,Y,Z$ be definable complex analytic spaces and suppose we have (solid) diagrams
\[\begin{tikzcd}
X\ar{d}{g}\ar{r}{h}&Y\ar[ld,dashed,"f"]		&&		X^\an\ar{d}{g^\an}\ar{r}{h^\an}&Y^\an\ar{ld}{\phi}\\
Z&		&&		Z^\an&
\end{tikzcd}\]
such that $h$ is proper, surjective on points, and $\O_Y\to h_*\O_X$ is injective.  Then a unique $f$ exists such that $f^\an=\phi$.
\end{prop}

In preparation, we need the following lemma:
\begin{lem}\label{descoh}

Let $f:X\ra Y$ be a proper morphism of definable complex analytic spaces that is surjective on points and such that $\O_Y\to f_*\O_X$ is injective.
Let $s\in \Gamma(Y^\an,\O_{Y^\an})$ be such that $(f^\an)^\sharp s\in  \Gamma(X,\O_X)$. Then $s\in \Gamma(Y,\O_Y)$.

\end{lem}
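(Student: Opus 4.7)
The plan is to descend $f$ by working inside $\pi_*\O_X$ on $Y$, exploiting the hypothesis $\O_Y \hookrightarrow \pi_*\O_X$ together with the faithful exactness of analytification (Theorem \ref{anfaithful}).

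Consider the locally finitely generated $\O_Y$-submodule $\mathcal{F} := \O_Y + \O_Y \cdot \pi^*f \subset \pi_*\O_X$ generated by $1$ and $\pi^*f$. Granting that $\mathcal{F}$ is coherent (discussed below), exactness of analytification together with the compatibility $(\pi_*\O_X)^{\an} \cong \pi^{\an}_*\O_{X^{\an}}$ yields $\mathcal{F}^{\an} = \O_{Y^{\an}} + \O_{Y^{\an}} \cdot f$ inside $\pi^{\an}_*\O_{X^{\an}}$, where $(\pi^*f)^{\an}$ is identified with $f$ via the (injective) embedding $\O_{Y^{\an}} \hookrightarrow \pi^{\an}_*\O_{X^{\an}}$ coming from analytifying the hypothesis. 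Since $f$ is itself already a section of $\O_{Y^{\an}}$, the second summand sits inside the first, so $\mathcal{F}^{\an} = \O_{Y^{\an}}$. The coherent quotient $\mathcal{F}/\O_Y$ (coherent by Lemma \ref{basiccoherence}) then has vanishing analytification, and faithfulness (Theorem \ref{anfaithful}(1)) forces $\mathcal{F} = \O_Y$.

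This equality gives $\pi^*f \in \mathcal{F}(Y) = \O_Y(Y)$, producing a unique $\phi \in \O_Y(Y)$ whose image in $\pi_*\O_X(Y)$ equals $\pi^*f$; uniqueness follows from the injectivity hypothesis. Analytifying the identity $\pi^*\phi = \pi^*f$ in $\O_X(X)$ and applying injectivity of $\O_{Y^{\an}} \hookrightarrow \pi^{\an}_*\O_{X^{\an}}$ at the level of global sections, we obtain $\phi^{\an} = f$ in $\O_{Y^{\an}}(Y^{\an})$. Therefore $f \in \Gamma(Y, \O_Y)$ as required.

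The hard part is establishing the coherence of $\mathcal{F}$ (or equivalently the compatibility step and coherence of the ambient sheaf $\pi_*\O_X$). When $\pi$ is finite this is immediate from Lemma \ref{finitepush}: $\pi_*\O_X$ is itself coherent and commutes with analytification, so $\mathcal{F}$ is a locally finitely generated subsheaf of a coherent sheaf, hence coherent by the argument of Theorem \ref{coherenceofo}. For general proper $\pi$ one needs the definable analog of Grauert's coherence theorem, which can be reduced locally on $Y$ to the finite case handled by Lemma \ref{finitepush} via a Stein-type factorization argument.
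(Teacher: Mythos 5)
There is a genuine gap, and you put your finger on it yourself: the coherence of $\mathcal{F} \subset \pi_*\O_X$ (equivalently, the compatibility of $\pi_*$ with analytification) for a general proper $\pi$. The paper proves this only for \emph{finite} maps $Z \to Y$ with $Z$ closed in $Y\times\C$ (Lemma \ref{finitepush}); a general ``definable Grauert coherence'' theorem is neither stated nor proved, and your suggested resolution via ``Stein-type factorization'' is not available in the definable category --- indeed, producing a Stein factorization in the first place would require knowing that $\pi_*\O_X$ is a finite $\O_Y$-algebra, which is precisely the Grauert-type coherence you are trying to establish, making the reduction circular. Without this ingredient the argument does not go through: you cannot form $\mathcal{F}^{\an}$ or compare it with $\O_{Y^\an}$.

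The paper's proof is designed specifically to avoid this. Rather than pushing forward along $\pi$ directly, it uses Lemma \ref{sectionsaremapstoc} to view $\pi^* f$ as a map $X \to \C$, hence a map $\phi: X \to Y\times\C$ over $Y$; the set-theoretic image $Z$ is a closed definable analytic subset of $Y\times\C$ that is \emph{finite} over $Y$ (proper with zero-dimensional fibres). A thickening $Z_k = V(I_Z^k)$, through which $\phi$ factors by the Nullstellensatz (Corollary \ref{nullstellensatz}), then satisfies the hypotheses of Lemma \ref{finitepush}, giving coherence of $\psi_*\O_{Z_k}$ and compatibility with analytification --- and the finish is essentially the argument you sketch, applied to the global section $w \in \Gamma(Y, \psi_*\O_{Z_k})$ and the subsheaf $\O_Y \hookrightarrow \psi_*\O_{Z_k}$. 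Morally your $\mathcal{F}$ is the paper's $\psi_*\O_{Z_k}$, but you need the detour through $Y\times\C$ and the finiteness of $Z$ to legitimize the coherence claim.
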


\begin{proof}

The section $(f^\an)^\sharp s$ corresponds to a morphism $g:X\to \C$ by Lemma \ref{sectionsaremapstoc}. The resulting morphism $h=f\times g:X\to Y\times \C$ is proper, so by Proposition \ref{defanset} the reduced analytic image is naturally a definable complex analytic subspace $Z\subset Y\times \C$.  Note that the projection $Z\to Y$ is finite. Let $I_Z$ be the coherent ideal sheaf of $Z$ in $Y\times\C$. The
 pullback  $(h^\sharp I_Z)\O_X$ is a nilpotent coherent sheaf on $X$ and thus some power of it is $0$ by Theorem \ref{nullstellensatz}. Say $(h^\sharp I_Z)^k\O_X=0$.
 Set $Z_k\subset Y\times\C$ to be the definable complex analytic space cut out by $I_Z^k$. Then the map $h$ factors through $Z_k$, and thus the morphism
 $a:Z_k\ra Y$ is surjective on points, with the natural map $\O_Y\to a_*\O_{Z_k}$ being injective. By Proposition \ref{finitepush} we see that $a_*\O_{Z_k}$
 is a coherent sheaf. Let $w$ be the $\C$ coordinate of $Y\times\C$.  Then $w\in\Gamma(Y,a_*\O_{Z_k})$ is the image of $s\in\Gamma(Y^\an,\O_{Y^\an})$, and so the claim follows by Corollary \ref{cor is zero}.
\end{proof}

\begin{proof}[Proof of Proposition \ref{prop mapdescent}]

The uniqueness statement follows immediately from Theorem \ref{anfaithful} so we need only show the existence of $f$. By definable choice $\phi$ is a morphism of definable topological spaces. Let $U\subset Z$ be definable open and 
$s\in\O_Z(U)$. Then by Lemma \ref{descoh} the section $\phi^\sharp s\in \Gamma(\phi^{-1}(U), \O_{Y^{\an}})$ is actually in $\Gamma(\phi^{-1}(U), \O_Y)$. We thus get a morphism $f:Y\ra Z$ and it 
follows from 
Theorem \ref{anfaithful} that $g=f\circ h$.
\end{proof}

\subsection{\'Etale descent}\label{S et descent}
The purpose of this section is to show that quotients by closed \'etale equivalence relations exist in the category of definable complex analytic spaces.

For $X$ a definable complex analytic space, an equivalence relation in the category of definable complex analytic spaces is a diagram $R\rightrightarrows X$ such that for any definable complex analytic space $S$, $\Hom(S,R)\rightrightarrows \Hom(S,X)$ is an equivalence relation\footnote{That is, the resulting map $\Hom(S,R)\to \Hom(S,X)\times\Hom(S,X)$ is the inclusion of a set-theoretic equivalence relation}.  We define the big definable complex analytic site $\underline{\mathrm{DefAnSp}/\C}$ to be the category of definable complex analytic spaces and whose covers are finite covers by open definable subspaces. Given an equivalence relation $R\rightrightarrows X$, we define the sheaf $X/R$ on $\underline{\mathrm{DefAnSp}/\C}$ to be the sheafification of
\[S\mapsto \Hom(S,X)/\Hom(S,R).\]
We say $\pi:X\to Y$ is a quotient of $X$ by $R$ if $Y$ is a definable complex analytic space which represents the sheaf $X/R$.  Concretely, this means that a morphism $S\to Y$ is given by taking a definable cover $S_i $ of $S$ and giving morphisms $S_i\to X$ that agree on overlaps up to the equivalence relation.  A quotient is unique up to unique isomorphism provided it exists.

We say that a morphism $f:X\to Y$ of definable complex analytic spaces is \'etale if it is open and locally an isomorphism onto its image (or equivalently if it is analytically \'etale, by Theorem \ref{anfaithful}).  We say an equivalence relation $R\rightrightarrows X$ is \'etale if the two maps are \'etale and closed if $R\to X\times X$ is a closed immersion.    

\begin{prop}\label{trivcover}
Let $U$ be a definable complex analytic space and $R\rightrightarrows U$ a closed \'etale definable equivalence relation.  Then there exist finitely many definable open sets $U_i$ of $U$ such that $R\cap (U_i\times U_i) = \Delta_{U_i}$, and such that $\bigcup U_i$ surjects on the set-theoretic quotient $U/R$.

\end{prop}

\begin{proof}

\item\vskip1em\noindent\emph{Step 1.}
By definable choice\cite[Chapter 6 \S 1.2]{Vdd}, we can find a definable subset $T$ of $U$ which has exactly one point for each $R$-representative class.  Let us stratify $T$ by submanifolds $T_i$\cite[Theorem 6.1]{PS}. For each $i$ let 
$S_i$ be the set of all points equivalent to $T_i$ but not actually in $T_i$. It is easy to see that $S_i$ is also a submanifold. Now we will show how to further stratify such that $T_i$ is disjoint from $\bar{S_i}$.
To do this, note that $T_i\cap \bar{S_i}$ is of smaller dimension than $T_i$. Thus by successively iterating in this way we can obtain our desired stratification. By further stratifying, we can assume that the number of 
$R$-pre-images along $T_i$ is constant, and that each $T_i$ is a cell and is therefore simply-connected.

\item\vskip1em\noindent\emph{Step 2.}  By the argument in Proposition \ref{propercover} we may take $V_i$ to be a definable open neighbourhood of $T_i$ such that $R\cap (V_i\times U)$ consists of $k$ \'etale sections over $V_i$---which we denote by $R_0$---and another piece $R'$ which does not 
intersect $T_i\times U$. 

\item\vskip1em\noindent\emph{Step 3.}  Pick a definable distance function $d(x,y)$ on $U\times U$, and pick a definable exhaustion function $E:U\ra\R_{\geq 0}$. In other words, $E^{-1}([0,c])$ is compact for all $c\in\R$. For a set $S\subset U$ we write
$S^{c}$ to mean $S\cap E^{-1}([0,c])$.

\item\vskip1em\noindent\emph{Step 4.}  By definable choice we may let $h:\R^2_{\geq 0}\ra (0, 1)$ be a definable, positive function such that: for all $(c,c')$, if we set set $\epsilon=h(c,c')$ then $R'\cap B_{d,\epsilon}(T^c_i)\times B_{d,\epsilon}(T^{c'}_i)=\emptyset$.
Consider the function \[g(c):=\min_{c_1,c_2<c}\frac{h(c_1,c_2)}{2}.\] We let $f(c)$ be a definable positive, continuous, decreasing function strictly smaller than $g(c)$. Note that 
$h(c,c')>\min(f(c),f(c')).$

\item\vskip1em\noindent\emph{Step 5.}  Define $d'(u,T_i):=\min_{c,t\in T^c_i} d(u,t) f(c)^{-1}$.  Define $W_i$ to consist of all points $u\in V_i$ such that $d'(u,T_i)<\min_{u'\in R(u)\setminus u}d'(u',T_i).$ We claim that $W_i$ contains an open neighbourhood around $T_i$. 
Let $t\in T_i$. For $\epsilon>0$, consider the ball $B_{d',\epsilon}(t)$. It is clear that for sufficiently small $\epsilon$, 
$d'$ is smaller on this ball then on $R_0$, and $d'$ is smaller then $1/2$. Suppose that $u\in B_{d',\epsilon}(t), u'\in R'$ and  $d(u',t')\leq f(c)$ for $t'\in T^c_i$. It follows that the point
$(u,u')\in R'\cap  B_{d,\epsilon}(T^c_i)\times B_{d,\epsilon}(T^{c'}_i)$ for $\epsilon = \min(f(c),f(c')) < h(c,c')$.  This is a contradiction.
Setting $U_i\subset W_i$ to be the maximal open subset (which is a definable condition), the proof is completed.
\end{proof}

\begin{cor}\label{quotient}Quotients by closed \'etale equivalence relations exist in the category of definable complex analytic spaces.
\end{cor}
\begin{proof}The quotient can be glued together from the cover provided from Proposition \ref{trivcover}.
\end{proof}

\begin{cor}Let $X,Y$ be definable complex analytic spaces and $f:X\to Y$ an \'etale morphism.  Then there is a definable open cover $X_i$ of $X$ such that the restrictions $f_j:X_j\to Y$ are open immersions.
\end{cor}
\begin{proof}Apply the proposition to the equivalence relation $X\times_Y X\subset X\times X$.
\end{proof}

As an application we end this subsection with a definable version of Riemann existence.
\begin{lemma}\label{riemann exist}
Let $Y$ be a definable complex analytic space and $\phi:\mathcal{X}\to Y^\an$ a finite \'etale morphism.  Then $\phi$ is the analytification of a finite \'etale morphism $f:X\to Y$ which is uniquely determined by the property that for any morphism $g:Z\to Y$, any analytic lift of $g$ to $X$ is definable.
\end{lemma}

\begin{proof}  First, the lifting property is easily seen to uniquely specify the definable structure on $\mathcal{X}$.  For the existence, we may assume $Y$ (and $\mathcal{X}$) are reduced, as the structure sheaf on $X$ is uniquely determined as the pull-back from $Y$.  Let $U_i\to Y$ be a definable simply-connected cover (see Remark \ref{simply}), and denote by $(U_i^j)^\an\to \mathcal{X}$ the possible lifts of $U_i^\an\to Y^\an$ to $\mathcal{X}$.  Let $U=\sqcup_{i,j}U_i^j$ and $U^\an\to\mathcal{X}$ the obvious morphism.  We claim the closed analytic subvariety $U^\an\times_\mathcal{X}U^\an\subset U^\an\times U^\an$ has a natural definable structure $R\subset U\times U$.  Indeed, $U^\an\times_\mathcal{X}U^\an\to U^\an\times U^\an$ is a disjoint union of $(U_i^j)^\an\times_\mathcal{X}(U_{i'}^{j'})^\an\to (U_i^j)^\an\times (U_{i'}^{j'})^\an$, which is a union of components of $U_i^\an\times_{Y^\an}U_{i'}^\an\to U_i^\an\times U_{i'}^\an$ and therefore can be given a definable structure by taking the union of the same components of $U_i\times_Y\times U_{i'}\to U_i\times U_{i'}$.  The quotient $X$ of the equivalence relation $R\subset U\times U$ has a finite \'etale morphism $f:X\to Y$, definabilizes to $\phi$, and clearly has the lifting property.
\end{proof}

\subsection{Definabilization}
\def\Zariski{\mathrm{Zar}}
\def\fZariski{\mathrm{fZar}}
\def\Sch{\mathrm{Sch}}
\renewcommand{\Define}{(-)^\df}  
Recall that throughout by scheme (resp. algebraic space) we mean a finite type separated scheme over $\C$ (resp. a finite type separated algebraic space over $\C$).  If $X$ is an affine scheme presented as $\spec\C[x_1,\dots,x_n]/I$ we define the definabilization $X^{\df}$ to be the definable complex analytic subspace of $\C^n$ given by the coherent ideal sheaf $I\O_{\C^n}$.  Note that the category of sheaves on the Zariski site $X^\Zariski$ of $X$ is naturally equivalent to the category of sheaves on the site $X^{\fZariski}$ only allowing finite Zariski covers, as an arbitrary cover is refined by a finite one.  It is then easy to see we obtain a functor from affine schemes to definable complex analytic spaces which is functorial and maps finite open covers to open covers, and thereby extends uniquely to a functor from schemes to definable complex analytic spaces $\Define:(\mbox{Sch}/\C)\to(\mbox{DefAnSp}/\C)$.

For $X$ a scheme, the definabilization functor yields a morphism of locally $\C$-ringed definable spaces
\[g:(X^\df,\O_{X^\df})\to (X^{\fZariski},\O_X)\]
as there is a natural map $g^{-1}\O_X\to\O_{X^\df}$.  Let $\Coh(X)$ be the category of coherent sheaves on $X$, and $\Coh(X^\df)$ the category of definable coherent sheaves on $X^\df$.  We then define a definabilization functor 
\[\Define :\Coh(X)\to\Coh(X^\df):F\mapsto F^\df:=\O_{X^\df}\otimes_{g^{-1}\O_X}g^{-1}F.\]
Evidently there is a natural isomorphism $(\O_{X})^\df\cong \O_{X^\df}$.  

\def\Aff{\mathrm{Aff}}
\def\etale{\mathrm{\acute{e}t}}
\def\fetale{\mathrm{f\acute{e}t}}
\def\AlgSp{\mathrm{AlgSp}}
\def\DefAnSp{\mathrm{DefAnSp}}
\def\Desc{\mathrm{Desc}}
We now extend this picture to algebraic spaces.  This level of generality is necessary for \S\ref{sectiondefimage} as Artin's algebraization theorem does not hold true for schemes.  

\begin{prop}\label{prop ext alg sp}There is a unique extension
\[\Define:(\AlgSp/\C)\to(\DefAnSp/\C)\]
of the definabilization functor on affine schemes to algebraic spaces.  Moreover, for each algebraic space $X$ there is a definabilization functor on sheaves 
\[\Define:\Coh(X)\to\Coh(X^\df)\]
which is compatible with the definabilization of sheaves on affine schemes for any affine (\'etale) open of $X$.
\end{prop}
\begin{proof}
Suppose that $X$ is an algebraic space, and that we have a presentation $R\rightrightarrows U\to X$ as the quotient of $U$ by a closed \'etale equivalence relation $R\rightrightarrows U$ where $R,U$ are schemes.  Recall that we think of $X$ as a sheaf on the big \'etale site $(\Sch/\C)^\etale$ of schemes, and that for $X$ to be presented by $R\rightrightarrows U$ means that we have a morphism $\pi:U\to X$ of sheaves on $(\Sch/\C)^\etale$ identifying $X$ as the quotient $U/R$ (see \S\ref{S et descent}).  

We obtain a definable closed \'etale equivalence relation $R^\df\rightrightarrows U^\df$ (using Theorem \ref{anfaithful}), and by Corollary \ref{quotient} we can define the definabilization $X^\df$ of $X$ to be the quotient.  We claim that this is independent of the presentation and yields a functorial extension $\Define:(\AlgSp/\C)\to(\DefAnSp/\C)$.  It suffices to show that for any algebraic spaces $X,Y$ with presentations $R\rightrightarrows U$ and $T\rightrightarrows V$ and any morphism $f:X\to Y$ we obtain a morphism $f^\df:X^\df\to Y^\df$ and that the formation of $f^\df$ is compatible with compositions.  Both claims are clear from the universal property satisfied by the quotient.

We finally show the existence of the definabilization functor on sheaves $\Define:\Coh(X)\to \Coh(X^\df)$.  For this, given a presentation $R\rightrightarrows U\to X$, the category $\Coh(X)$ is naturally equivalent via pullback to the category $\Desc(R\rightrightarrows U)$ of descent data \cite[\href{https://stacks.math.columbia.edu/tag/03M3}{Tag 03M3}]{stacks-project}:  pairs $(F,\phi)$ where $F\in\Coh(U)$ and $\phi:\pi_1^*F\to \pi_2^*F$ is an isomorphism such that on $R\times_U R$ we have $\pi^*_{13}\phi=\pi^*_{23}\phi\circ\pi^*_{12}\phi$, where $\pi_i:U\times U\to U$ and $\pi_{ij}:R\times_U R\to U\times U$ are the natural projections.  Corollary \ref{quotient} likewise shows that for a quotient $\mathcal{R}\rightrightarrows \mathcal{U}\to\mathcal{X}$ of a definable complex analytic space by a closed \'etale equivalence relation, the natural functor $\Coh(\mathcal{X})\to \Desc(\mathcal{R}\rightrightarrows \mathcal{U})$ is an equivalence.  With these identifications we then define $\Define:\Coh(X)\to \Coh(X^\df)$ as $\Define: \Desc(R\rightrightarrows U)\to \Desc(R^\df\rightrightarrows U^\df)$ by $(F,\phi)\mapsto (F^\df,\phi^\df)$, and this is easily seen as above to be independent of the choice of presentation and compatible with restrictions to open subspaces.
\end{proof}

\subsection{Quotients by finite groups}
The purpose of this section is to show that quotients by finite group actions exist in the category of definable complex analytic spaces.  This will be used to endow $\Gamma\backslash\Omega$ with a definable structure when $\Gamma$ is not torsion-free.

\begin{defn}
Let $X$ be a definable complex analytic space with a left action (in the category of definable complex analytic spaces) by a finite group $G$.  A geometric quotient of $X$ by $G$ is definable complex analytic space $Y$ and a morphism
$q:X\ra Y$ which on the level of topological spaces is the quotient map to the set of orbits (with the quotient topology) and such that $\O_Y=(q_*\O_X)^G$.  A geometric quotient is a categorical quotient and is therefore unique up to unique isomorphism when it exists, in which case we will usually denote it $q:X\to G\backslash X$. 
\end{defn}

\begin{prop}\label{finitequotient}

Let $X$ be a definable complex analytic space with a left action by a finite group $G$. Then the geometric quotient $q:X\to G\backslash X$ of $X$ by $G$ exists. Moreover, $q$ analytifies to 
the analytic geometric quotient.

\end{prop}
\begin{proof}We first prove a special case. Recall that for affine varieties, geometric quotients by finite groups exist.

\begin{lemma}\label{algquot}
Let $V$ be an affine complex algebraic variety with a left action by a finite group $G$.  Then the definabilization of the algebraic geometric quotient $q:V\to G\backslash V$ is the definable geometric quotient.
\end{lemma}

\begin{proof}

By \cite[Remark 1.6]{Popp} the analytification of $q:V\to G\backslash V$ is an analytic geometric quotient.  Thus, $q^\df:V^{\df}\ra (G\backslash V)^{\df}$ is the definable quotient on the level of topological spaces.  It remains to show that $\O_{(G\backslash V)^\df}=(q^\df_*\O_{V^\df})^G$.  Observe that $q$ is finite so $q^\df_*\O_{V^\df}$ is a coherent $\O_{(G\backslash V)^\df}$-module by Proposition \ref{finitepush}.  It follows that $(q^\df_*\O_{V^\df})^G$ is a coherent $\O_{(G\backslash V)^\df}$-module, since it is the intersection of the kernels of the maps $g^\sharp-1:q^\df_*\O_{V^\df}\to q^\df_*\O_{V^\df}$ for all $g\in G$.  Thus by Theorem \ref{anfaithful}, the image of the natural morphism $\O_{(G\backslash V)^\df}\to q^\df_*\O_{V^\df}$ is $(q^\df_*\O_{V^\df})^G$ since the analytifications agree.
\end{proof}

\begin{lemma}\label{closedquot}
Let $X$ be a definable complex analytic space with a left action by a finite group $G$ and $q:X\to G\backslash X$ the geometric quotient.  Let $Z\subset X$ be a closed $G$-invariant definable complex analytic subspace.  Then $q:Z\to q(Z)$ is a geometric quotient of $Z$ by $G$.
\end{lemma}

\begin{proof}Note that $q$ is finite.  According to Corollary \ref{finiteimage}, the image $q(Z)$ exists as a closed definable complex analytic subspace of $G\backslash X$.  By Proposition \ref{prop mapdescent} it is enough to show that $q^\an:Z^\an\to q(Z)^\an$ is the analytic quotient.  On the underlying topological spaces this is clear, and the image of $(q^\an_*\O_{X^\an})^G$ in $q^\an_*\O_{Z^\an}$ is clearly $(q^\an_*\O_{Z^\an})^G$.

\end{proof}

We now prove the proposition. Suppose first that $X$ is a basic definable analytic space, given by definable
ideal sheaf $I\subset \O_{\C^n}(U)$, where $U\subset\C^n$ is open and definable. Denote by $U^G$ the $|G|$-fold cartesian product of $U$, indexed by the elements of $G$.  There is a natural map $i:X\hookrightarrow U^G$ given by $i(x):= (g^{-1}x)_g$, which is $G$-equivariant. 

Now $U^G$ is open inside $(\C^n)^G$ which is naturally an algebraic variety. Thus the geometric quotient $G\backslash(\C^n)^G$ exists by Lemma \ref{algquot}. It trivially follows that $G\backslash U^G$ also exists as it is just a definable open subspace. Thus, by Lemma \ref{closedquot} the quotient $G\backslash X$ also exists. 

Finally, we handle the case of general $X$. Note first that the quotient $q:X\ra G\backslash X$ exists in the category of definable topological spaces by \cite[Cor 10.2.18]{Vdd}. Pick a covering of $X$ by basic definable open subspaces $U_i$. By Lemma \ref{propercover} we may thus pick a covering of $G\backslash X$ such that the inverse image of every open is a disjoint union of opens subsets each of which is contained in
some $U_i$, and hence themselves basic definable open spaces. Since the geometric quotient is local on $G\backslash X$ for the definable site, the proof is complete.

\end{proof}

\subsection{Previous related work}

For a real analytic manifold $M$, Kashiwara--Schapira \cite[\S 7]{KS} have introduced the \emph{subanalytic site} $M_{\mathrm{sa}}$ of $M$. The objects of $M_{\mathrm{sa}}$ consist of subanalytic open subsets of $M$ whose coverings satisfy a local finiteness condition:  for any subanalytic open set $U\subset M$, any covering $U_i$ of $U$ in $M_{\mathrm{sa}}$, and any relatively compact $K\subset M$, the cover $K\cap U_i$ of $K\cap U$ has a finite refinement.

It is natural to compare this site to our notion of a definable topological space and its associated definable site when working with the subanalytic o-minimal structure $\R_{\mathrm{an}}$ (see e.g. \cite{VddM}).  On the one hand, a compact real analytic manifold $M$ admits a unique structure of an $\R_\an$-definable topological space. Moreover, all coverings refine to be finite coverings, and so in this case the Kashiwara--Schapira site and the $\R_\an$-definable site give equivalent categories of sheaves. 

On the other hand, for non-compact $M$ the two sites end up being different in a few important ways:

\begin{enumerate}
\item If $M$ is non-compact, then $M$ does not have a canonical structure as an $\R_{\mathrm{an}}$-definable topological space. This is because the classical notion of subanalyticity of a subset $Z\subset \R^n$ (see e.g. \cite{BM}) is a \emph{local} condition, which does not see the behavior `at infinity'. By contrast,  $\R_{\mathrm{an}}$-definability of a subset $Z\subset\R^n$ is a stronger condition, which roughly says that $Z$ is \emph{globally} subanalytic up to a finite cover.
\item If $M$ is non-compact, even if one equips $M$ with the structure of an $\R_{\mathrm{an}}$-definable space, the objects of $M_{\mathrm{sa}}$ and the definable site are different. Indeed, an open set $U\subset\R^n$ is definable in $\R_{\mathrm{an}}$ iff its closure $\ol{U}\subset\PP^n(\R)$ in real projective space is subanalytic. 
\item The definable site only allows finite coverings, so the sheaf axiom is much \emph{less} restrictive. As a notable example, taking $M$ to be affine space the definable site does not allow the covering by open balls of radius $\epsilon$, whereas $M_{\mathrm{sa}}$ does.
\end{enumerate}

As demonstrated more clearly by (3) above, the Kashiwara-Schapira site does not restrict behaviour at infinity. As such, it is inadequate for our purposes as one of our main motivations is to provide non-trivial global restrictions on holomorphic functions beyond what one sees locally.

 \section{Definable GAGA}\label{sectiondefGAGA}  In this section we prove an algebraization theorem for definable coherent sheaves on algebraic spaces.  Precisely, we show:
 
 \begin{thm}\label{gaga} Let $X$ be an algebraic space and $\Define:\Coh(X)\to\Coh(X^\define)$ the definabilization functor.  Then
\begin{enumerate}
\item $\Define$ is fully faithful and exact.
\item The essential image of $\Define$ is closed under taking subobjects and quotients.	
\end{enumerate}
\end{thm}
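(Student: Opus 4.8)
The plan is to reduce the global statement about algebraic spaces to a local statement about affine schemes, then prove the affine case by a careful analysis using coherence (Theorem \ref{coherenceofo}), the faithfulness and exactness of analytification (Theorem \ref{anfaithful}), and the comparison of completions of local rings (Lemma \ref{isoco}).

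\begin{proof}[Proof sketch of Theorem \ref{gaga}]
First I would reduce to the case of affine schemes. Using the étale-descent description of $\Coh(X)$ for an algebraic space $X$ as descent data along a presentation $R\rras U$ by affine schemes, and the fact that the definabilization functor is compatible with this descent (as recorded in the Definabilization subsection), each of the assertions---full faithfulness, exactness, and closure of the essential image under subobjects and quotients---is étale-local on $X$ and hence reduces to the corresponding statement for an affine scheme $U=\Spec A$. Exactness of $\Define$ is immediate: it factors as $g^{-1}$ followed by $\O_{X^\df}\otimes_{g^{-1}\O_X}(-)$, and the point is that $g^{-1}$ is exact and the tensor product is exact on coherent sheaves because, stalkwise, $\O_{X,p}\to\O_{X^\df,p}$ is flat---indeed both rings have the same completion (the argument is parallel to Lemma \ref{isoco}, comparing to the formal power series ring, and invoking Artin--Rees as in the proof of Lemma \ref{isoco} together with Noetherianity from Lemma \ref{noetherian}).

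Next, for full faithfulness, since both categories are abelian and $\Define$ is exact and $\C$-linear, it suffices to show $\Hom_{\Coh(X)}(\O_X,F)\xrightarrow{\sim}\Hom_{\Coh(X^\df)}(\O_{X^\df},F^\df)$ for all $F$, i.e. that $F(X)\to F^\df(X^\df)$ is a bijection for all coherent $F$ on an affine $X$. Injectivity: a global section $s$ of $F$ killed in $F^\df$ has all stalks $s_p\otimes 1 = 0$ in $F_p\otimes_{\O_{X,p}}\O_{X^\df,p}$; by faithful flatness of the completion and the injectivity $\O_{X,p}\hookrightarrow\O_{X^\df,p}$ established above, $s_p=0$ for all $p$, hence $s=0$. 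Surjectivity: present $F$ as the cokernel of a map $\O_X^a\xrightarrow{M}\O_X^b$ with $M$ a matrix over $A$; a definable global section of $F^\df$ lifts locally to a definable holomorphic $\C^b$-valued function, and one must show these local lifts can be adjusted by elements of $\im M$ to glue to a section coming from $A$. Here is where the o-minimal finiteness is used: the obstruction lives in a definable coherent sheaf (the image sheaf, or a suitable relation sheaf), and one argues that the algebraic and definable global sections of $\im M\subset\O_X^b$ agree---this in turn reduces, via a finite affine cover and the coherence machinery, to the statement for $\O_X$ itself, i.e. that a definable holomorphic function on (the definabilization of) an affine variety that is ``algebraic at every point'' is globally a regular function. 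This last point is essentially the content of the definable Chow theorem \cite{PS} applied to the graph.

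Finally, for closure under subobjects and quotients: given $F$ algebraic (i.e. $F\cong G^\df$ for $G\in\Coh(X)$) and a definable coherent subsheaf $\mathcal{E}\subset F$, I want to produce an algebraic $E\subset G$ with $E^\df=\mathcal{E}$. By definable Noetherian induction (Proposition \ref{induction}) the definable coherent subsheaves of $F$ form a set with ACC, so one can try to define $E$ as the subsheaf of $G$ generated by all algebraic subsheaves $E'\subset G$ with $(E')^\df\subset\mathcal{E}$; by ACC on the algebraic side (Noetherianity) this is a well-defined coherent subsheaf of $G$, and $E^\df\subset\mathcal{E}$. The crux---and the step I expect to be the main obstacle---is the reverse inclusion $\mathcal{E}\subset E^\df$, i.e. that $\mathcal{E}$ is actually generated by definabilizations of algebraic subsheaves. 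This is local, and locally $\mathcal{E}$ is generated by finitely many definable sections $t_1,\dots,t_r$ of $F^\df=G^\df$; one needs each $t_i$ to lie in $E'^\df$ for some algebraic $E'$, which (since $\mathcal{E}$ is a finite sum of the cyclic subsheaves $\O_{X^\df}\cdot t_i$) amounts to understanding the ``algebraic closure'' of a single definable section $t\in G^\df(X^\df)$. Since $G^\df(X^\df)\supset G(X)$ may be strictly larger (Example \ref{countereg}), $t$ need not be algebraic, but the annihilator ideal of $t$ and the image $\O_{X^\df}\cdot t$ can be controlled: the kernel of $\O_{X^\df}\to G^\df$, $1\mapsto t$, is a definable coherent ideal, and by Corollary \ref{instalks} together with faithful flatness of completion one checks that $\O_{X^\df}\cdot t$ coincides with $(\O_X\cdot t_0)^\df$ for an appropriate algebraic approximation---more precisely, one uses that the image sheaf's formation commutes with $\Define$ and that a coherent subsheaf of an algebraic sheaf which is ``algebraic on completions at every point'' descends, by an argument combining Artin approximation-type reasoning with the coherence of all relevant relation sheaves. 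I would organize this final part as: (i) handle cyclic $\mathcal{E}=\O_{X^\df}\cdot t$; (ii) handle $\mathcal{E}$ a sum of finitely many such, using exactness of $\Define$ and Lemma \ref{basiccoherence}; (iii) deduce the quotient case by applying the subobject case to the kernel of $F\to F/\mathcal{E}$ and using exactness.
\end{proof}
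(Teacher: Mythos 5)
Your proposal has the right ingredients floating around (definable Chow, Noetherian induction, faithfulness/exactness of analytification), but the core of the argument is missing and the logical flow is backwards relative to what actually works.

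First, the flow. You attempt to prove full faithfulness directly, by showing $F(X)\to F^\df(X^\df)$ is a bijection, and then attack closure under subobjects. The paper does the opposite, and for good reason: faithfulness and exactness are cheap (Lemma \ref{defineexact}, via $\An$ and $\An\circ\Define$), and \emph{fullness is deduced from closure under subobjects} by passing to the graph of a homomorphism inside $F_1\oplus F_2$, while closure under quotients follows by considering kernels. Your direct surjectivity argument for $F(X)\to F^\df(X^\df)$ does not close. You present $F$ as a cokernel of a matrix $M$ over $A$, lift a definable section on a definable open cover, and then assert the lifts can be ``adjusted by elements of $\im M$'' to glue. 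But the cover here is a \emph{definable} cover of $X^\df$, not a Zariski cover of $X$; the gluing obstruction does not live in any algebraic cohomology group you control, and the reduction to ``definable holomorphic functions algebraic at every point are regular'' is not the correct reduction. (Also, you write ``$G^\df(X^\df)\supset G(X)$ may be strictly larger (Example \ref{countereg})''—this directly contradicts the full faithfulness you just claimed to prove, and misreads the example: it exhibits a definable coherent sheaf \emph{not in the essential image}, not extra global sections of a definabilization.)

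Second, and more seriously, your argument for closure under subobjects is where the real content should be, and it is not there. You define $E$ as the sum of algebraic subsheaves whose definabilization lies in $\mathcal{E}$, which is fine, and correctly identify the crux as $\mathcal{E}\subset E^\df$. But the strategy you propose—reduce to cyclic subsheaves $\O_{X^\df}\cdot t$ and argue by ``Artin approximation-type reasoning''—cannot work as stated: $t$ is a section on a \emph{definable} open set, $\O_{X^\df}\cdot t$ is a subsheaf defined on that definable open, and there is no reason such a sheaf is the definabilization of anything; this is exactly the phenomenon of Example \ref{countereg}. The paper's actual mechanism is quite different and is worth internalizing. After reducing to $X$ reduced (an additional step you omit entirely, handled by the square-zero filtration argument in the proof), one: (i) shows $\mathcal{E}$ is algebraic on a dense Zariski open $U$ where $F$ is locally free—this uses the Grassmannian parametrization of locally split quotients and the Peterzil--Starchenko definable Chow theorem, Lemma \ref{vectgaga}, and it is where o-minimal algebraization enters; (ii) forms the algebraic ``closure'' $\widetilde E\subset F$ of $E_U$, intersects $\widetilde E^\df\cap\mathcal{E}$, and applies the definable Nullstellensatz (Corollary \ref{nullstellensatz}) to show $(I_Z^n\widetilde E)^\df\subset\mathcal{E}$ for some $n$, where $Z=X\smallsetminus U$; (iii) passes to the quotient $F'=F/I_Z^n\widetilde E$, which is supported on a lower-dimensional set, and closes by Noetherian induction. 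Neither the Grassmannian step nor the Nullstellensatz/dimension-descent step appears in your sketch, and I don't see how your cyclic-sheaf approach substitutes for them. As written, the proposal does not constitute a proof of part (2), and hence the deduction of fullness is also missing.
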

 
 \begin{eg}\label{countereg}
$\Define$ is \emph{not} essentially surjective.  Let $X=\G_m$ and let $\alpha\in\C$.  Note that the rank one $\C$-local system $V$ on $X^\an$ with monodromy $\lambda=e^{2\pi i \alpha}$ can be trivialized on a definable open cover---take for instance a finite union of overlapping angular sectors.  It follows that $\mathcal{F}=V\otimes_{\C_{X^\df}} \O_{X^\df}$ is a definable coherent sheaf.  Note that the only algebraic line bundle on $X$ is the trivial bundle $\O_{X}$.  

We claim that $\mathcal{F}$ can be nontrivial as a definable coherent sheaf; in fact, if $\alpha\notin\R$, $\mathcal{F}$ will not be trivial in any o-minimal structure.  A trivializing section is of the form $v\otimes f$ for a nowhere zero multivalued holomorphic function $f$ on $\C^*$ with monodromy $\lambda$.  Taking $q$ to be the standard coordinate on $\G_m$, after multiplying by some power $q^n$ we may assume $f=e^{\alpha\log q+g(q)}$ for a holomorphic function $g:\C^*\to\C$.  As $f'/f=\alpha q^{-1}+g'(q)$ is single-valued and definable, it cannot have essential singularities at $0$ or $\infty$ (or else it would have infinite fibers), and therefore $g$ is algebraic---in particular, a polynomial in $q,q^{-1}$.  But restricting to positive real $q$, we have that
\[\{q\in\R_{>0}\mid f(q)\in\R\}=\{q\in\R_{>0}\mid (\Imag g)(q)+(\Imag \alpha)\log q\in \pi\Z\}\]
is definable, which is only the case if $\Imag g$ is constant and $\Imag \alpha=0$.
\end{eg}

We will make extensive use of the following version of the definable Chow theorem of Peterzil--Starchenko:  
\begin{thm}[Peterzil--Starchenko {\cite[Corollary 4.5]{PSdefchow}}]\label{defchow}  Let $Y$ be a reduced algebraic space and $\mathcal{X}\subset Y^\df$ a closed definable complex analytic subset.  Then $\mathcal{X}$ is algebraic.
\end{thm}
 \begin{proof}
 The statement in \cite[Corollary 4.5]{PSdefchow} is for $Y$ affine; see also the version in \cite[Theorem 2.2]{MPT} for $Y$ a variety.  We may deduce the same statement for algebraic spaces using an \'etale cover or the fact that every algebraic space has a dense open subspace which is a scheme \cite[\href{https://stacks.math.columbia.edu/tag/06NH}{Tag 06NH}]{stacks-project}. 
 \end{proof}
 
 Before the proof we make some preliminary observations.  
 \begin{lemma}\label{defineexact}
 $\Define$ is faithful and exact.
 \end{lemma}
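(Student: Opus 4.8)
The plan is to reduce the statement to the already-proven faithfulness and exactness of the analytification functor $\An:\Coh(X)\to\Coh(X^\an)$ (Theorem \ref{anfaithful}), using the fact that the composite $\Coh(X)\to\Coh(X^\df)\to\Coh(X^\an)$ is the ordinary analytification functor, which is faithful and exact by classical GAGA-type considerations (or can be taken as known). First I would observe that faithfulness of $\Define$ is immediate: if $f^\df=0$ then $(f^\df)^\an=f^\an=0$, and since ordinary analytification is faithful on coherent sheaves over a finite-type algebraic space over $\C$, we get $f=0$. So the content is exactness.

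For exactness, since $\Define$ is defined by the tensor product $\O_{X^\df}\otimes_{g^{-1}\O_X}g^{-1}(-)$, right-exactness is automatic (pullback along a morphism of ringed sites is always right-exact: $g^{-1}$ is exact and $\otimes$ is right-exact). So the only thing to check is left-exactness, i.e.\ that $\Define$ preserves injections. Given $0\to E\to F$ in $\Coh(X)$, the question is whether $E^\df\to F^\df$ is injective. The natural approach is to check this on stalks: $(E^\df)_p\cong E_{\bar p}\otimes_{\O_{X,\bar p}}\O_{X^\df,p}$ where $\O_{X,\bar p}$ is the strictly henselian local ring (étale local ring) of $X$ at the point $p\in X^\df$, and $\O_{X^\df,p}$ is the definable analytic local ring. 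One knows from the algebraic theory that the map $\O_{X,\bar p}\to\O_{X^\df,p}$ is flat—indeed, both are Noetherian (Lemma \ref{noetherian} for the second) and the completions agree (the completion of the strictly henselian local ring of a finite-type $\C$-algebra agrees with the completion of the analytic local ring, and by Lemma \ref{isoco} the latter agrees with the completion of the definable analytic local ring), so flatness follows from the standard fact that a local homomorphism of Noetherian local rings inducing an isomorphism on completions is faithfully flat. Flatness of $\O_{X,\bar p}\to\O_{X^\df,p}$ immediately gives that $(E^\df)_p\to (F^\df)_p$ is injective, hence $\Define$ is left-exact.

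Concretely I would carry out the steps in this order: (i) reduce to the case of schemes, in fact to affine $X=\Spec A$ sitting in $\C^n$, by working étale-locally and invoking Corollary \ref{etequiv}; (ii) identify the stalk $\O_{X^\df,p}$, using that $X^\df=V(I\O_{\C^n}^\df)$, so $\O_{X^\df,p}=\O_{\C^n,p}^\df/I\O_{\C^n,p}^\df$, and similarly the algebraic (strictly henselian) stalk; (iii) prove the key flatness statement $A_{\bar p}\to \O_{X^\df,p}$ by comparing completions—here one uses Lemma \ref{isoco} to pass from the definable to the analytic completion, and the classical comparison of the analytic local ring with the henselization (Artin), plus the Noetherian hypotheses (Lemma \ref{noetherian}); (iv) conclude exactness of $\Define$ from stalkwise flatness, combining with the automatic right-exactness. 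The main obstacle I expect is step (iii): one must be careful that ``the stalk of $\O_{X^\df}$ at a point'' is genuinely a local ring with the right completion and is Noetherian, and that the comparison map from the algebraic (étale-)local ring really is flat—this is where all the definable content from the previous subsections (Noetherianity, the isomorphism of completions, and the coherence of $\O$) gets used; once flatness is in hand, the rest is formal. (Note the later parts of Theorem \ref{gaga}—full faithfulness and closure of the essential image under sub/quotients—are not part of this lemma and will be handled separately.)
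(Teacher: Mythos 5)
Your proof is correct, but for the exactness part it takes a genuinely different route from the paper's. The paper's own proof is a very short ``sandwich'' argument: both $\An\colon\Coh(X^\df)\to\Coh(X^\an)$ (by Theorem~\ref{anfaithful}) and the composite $\An\circ\Define$ (classical analytification) are exact and faithful, and an exact faithful functor between abelian categories reflects exactness, so $\Define$ is exact and faithful as well. You handle faithfulness the same way (via the composite), but for exactness you instead note that right-exactness is formal (pullback along a morphism of ringed sites is right-exact) and prove left-exactness by establishing that the map of local rings $\O_{X,\bar p}\to\O_{X^\df,p}$ is flat, using Lemma~\ref{noetherian}, Lemma~\ref{isoco}, the classical comparison of the algebraic and analytic local rings, and the standard fact that a local homomorphism of Noetherian local rings inducing an isomorphism on completions is flat. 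Both routes are sound; yours is more computational and yields the stronger, more explicit statement that definabilization is flat on stalks (which is essentially equivalent to exactness here, since all ideals involved are finitely generated), whereas the paper's argument is shorter and avoids any discussion of local rings by leaning entirely on Theorem~\ref{anfaithful}. One small remark: you could just as well work with the Zariski local ring of the closed point rather than the strict henselization, since both have the same completion as the analytic (and by Lemma~\ref{isoco}, definable analytic) local ring; the strict henselization is natural if one sets things up on the \'etale site, which is what you do when reducing from algebraic spaces to schemes, so this is consistent with your step~(i).
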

 
 \begin{proof}
 
By Lemma \ref{anfaithful} the map $\An:\Coh(X^\define)\to\Coh(X^\an)$ is faithful and exact. By \cite[Prop. 10 a,b]{Serre}, the usual analytification functor $\An\circ\Define$ is faithful and exact. It follows that $\Define$ is also faithful
 and exact. 
 \end{proof}
  
  Observe that a homomorphism $F_1\to F_2$ of coherent sheaves can be recovered from its graph as a subsheaf of $F_1\oplus F_2$.  It follows that part (2) of Theorem \ref{gaga} implies part (1) using Lemma \ref{defineexact}.  Moreover, the first part of (2) clearly implies the second part by considering the kernel and using the exactness part of Lemma \ref{defineexact}.  We therefore have:
 
\begin{lemma}\label{lemma it is enough}
Let $X$ be an algebraic space.  Then Theorem \ref{gaga} holds for $X$ if and only if for every algebraic coherent sheaf $F$ on $X$, any definable coherent subsheaf $\mathcal{E}\subset F^\df$ is the definabilization of an algebraic coherent subsheaf $E\subset F$.
\end{lemma}
 
The two preceding observations together imply that Theorem \ref{gaga} holds on $X$ if and only if it holds on the reduction $X^\reduced$:

\begin{lemma}\label{lemma nilp gaga}Let $X$ be an algebraic space with a nilpotent sheaf of ideals $I$ cutting out a subspace $X_0$.  Then Theorem \ref{gaga} holds for $X_0$ if and only if it holds for $X$.
\end{lemma}
\begin{proof}Note that for any definable complex analytic space $\mathcal{Y}$ and closed definable complex analytic subspace $\mathcal{X}\subset\mathcal{Y}$ cut out by an ideal $\mathcal{I}$, $\Coh(\mathcal{X})$ is naturally identified via push-forward with the full subcategory of sheaves in $\Coh(\mathcal{Y})$ annihilated by $\mathcal{I}$. The if direction is therefore obvious. Let us prove the converse direction. By induction on the order of nilpotence of $I$, we may assume $I$ is square-zero.

Let $F$ be a coherent sheaf on $X$ and $\mathcal{E}\subset F^\df$ a definable coherent subsheaf.  By Lemma \ref{lemma it is enough} it will be enough to show that $\mathcal{E}$ is algebraic.  Using Lemma \ref{defineexact} we have the diagram
\[\xymatrix{
0\ar[r]&(IF)^\df\ar[r]&F^\df\ar[r]&(F/IF)^\df\ar[r]&0\\
0\ar[r]&I^\df \mathcal{E}\ar[r]\ar[u]&\mathcal{E}\ar[r]\ar[u]&\mathcal{E}/I^\df\mathcal{E}\ar[r]\ar[u]&0\\
}\]

Note that the ideal $I$ being square-zero, both $(IF)^\df$ and $I^\df \mathcal{E}$ are coherent $\O_{X_0^\df}$-modules. Since Theorem \ref{gaga} holds for $X_0$, we have that $I^{\define}\mathcal{E}= M^{\define}$ for a coherent $M\subset IF$. But $\mathcal{E}$ is equal to the preimage of its image by the map $F^\df \rightarrow (F/M)^{\df}$; we may thus replace $F$ by $F/M$, and reduce to the case $I^{\define}\mathcal{E}=0$. Likewise, $\mathcal{E}$ maps to $(F/IF)^{\define}$ and must have algebraic image $N^{\define}$ for a coherent $N\subset F/IF$. Replacing $F$ by the inverse image of $N$, we may assume that $\mathcal{E}$ maps isomorphically
to $(F/IF)^{\define}$. Thus we are reduced to showing that if $F\to(F/IF)$ has a definable section then it is algebraic. Note that this section would have to land in $P^\df$, where $P\subset F$ is the subsheaf annihilated by $I$.  Since both $F/IF$ and $P$ are both coherent sheaves on $X_0$, this
follows from Theorem \ref{gaga} for $X_0$.
\end{proof}

\begin{proof}[Proof of Theorem \ref{gaga}]
We proceed by Noetherian induction on $X$, assuming Theorem \ref{gaga} holds for every proper subspace of $X$.  By Lemma \ref{lemma nilp gaga} we may assume $X$ is reduced.  Let $F$ be an algebraic coherent sheaf on $X$ and $\mathcal{E}\subset F^\df$ a definable coherent subsheaf.  By Lemma \ref{lemma it is enough} we must show that $\mathcal{E}$ is the definabilization of an algebraic coherent subsheaf $E\subset F$.

\vskip1em\noindent\emph{Step 1.}

\begin{lemma}\label{vectgaga}Any exact sequence
\[0 \ra \mathcal{E}\ra F^\df \ra \mathcal{G} \ra 0\] in $\Coh(X^\define)$ for which $\mathcal{E}$ and $\mathcal{G}$ are locally free is the definabilization of an exact sequence \[0 \ra E\ra F \ra G \ra 0\] in $\Coh(X)$ where $E$ and $G$ are locally free.
\end{lemma}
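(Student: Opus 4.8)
The strategy is to reduce to the classical GAGA theorem for vector bundles on proper algebraic spaces via a dévissage on an affine étale atlas, together with a gluing argument. First I would reduce to the case where $X$ is affine: étale-locally on $X$ the sequence $0\to\mathcal E\to F^\df\to\mathcal G\to 0$ is a sequence of definable coherent sheaves with $\mathcal E,\mathcal G$ locally free, and by Corollary \ref{etequiv} it suffices to descend the putative algebraic sequence along an étale cover, so (using that $\Define$ is fully faithful once we know it is essentially surjective onto such sequences, by the earlier remarks) we may assume $X=\Spec A$ is a smooth affine variety — here I use reducedness of $X$. In this case $F$ is free after shrinking, so the datum is a surjection $q:\O_{X^\df}^n\to \mathcal G$ with locally free kernel $\mathcal E$; equivalently, a definable algebraic sub-vector-bundle $\mathcal E\subset \O_{X^\df}^n$, i.e.\ a definable holomorphic map from $X^\df$ to a Grassmannian $\Gr(r,n)$ classifying the fiberwise inclusion.

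The heart of the argument is then to show that a definable holomorphic map $g:X^\df\to \Gr(r,n)^\df$ is the definabilization of an algebraic morphism $X\to \Gr(r,n)$. Since $\Gr(r,n)$ is projective, $\Gr(r,n)^\df = (\Gr(r,n)^\an)$ is also its analytification, so this follows from the definable Chow theorem of Peterzil--Starchenko: the graph $\Gamma_g\subset X^\df\times \Gr(r,n)^\df$ is a closed definable analytic subvariety of the algebraic variety $X\times\Gr(r,n)$, hence algebraic by \cite[Theorem 5.1]{PS}; projecting, the graph is the graph of an algebraic morphism $\tilde g:X\to\Gr(r,n)$ (bijectivity of the first projection on the algebraic graph follows from it holding analytically, e.g.\ by checking it on a dense open and using properness of the second factor, or by noting the algebraic graph is the closure of the analytic one). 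Pulling back the tautological subbundle along $\tilde g$ gives an algebraic sub-vector-bundle $E\subset\O_X^n$ with $E^\df=\mathcal E$ as subsheaves of $\O_{X^\df}^n$, and we set $G=\O_X^n/E$; exactness of $\Define$ (Lemma \ref{defineexact}) then identifies the definabilization of $0\to E\to \O_X^n\to G\to 0$ with the given sequence, and $G$ is locally free because its definabilization $\mathcal G$ is and local freeness can be detected on stalks after completion (Lemma \ref{isoco}, Lemma \ref{noetherian}), or simply because $G$ is a quotient of a free module with locally free — hence flat — kernel.

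Finally I would remove the auxiliary choice: to globalize from an affine étale atlas $U\to X$, apply the above to get an algebraic sequence $0\to E_U\to F|_U\to G_U\to 0$ over $U$; over $R=U\times_X U$ the two pullbacks of this sequence definabilize to the same sequence (namely the pullback of the original one), so by full faithfulness of $\Define$ — which we have now established on the relevant subcategory — they are canonically isomorphic, i.e.\ the datum descends along the étale equivalence relation to an algebraic sequence on $X$ whose definabilization is the one we started with. I expect the main obstacle to be the bookkeeping in this last descent step together with making the reduction to the affine case logically clean: one must be careful that the algebraic lift produced over $U$ is \emph{canonical} (it is, being characterized by $E_U = $ the algebraization of the subsheaf $\mathcal E|_U$, which is unique by full faithfulness of $\Define$ applied to maps between already-algebraized objects), so that the cocycle condition on $R\times_U R$ is automatic. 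The appeal to the definable Chow theorem is what does the real work; everything else is formal.
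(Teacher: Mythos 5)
Your proof is correct in substance and hinges on the same key input as the paper (the Peterzil--Starchenko definable Chow theorem applied to a map to a Grassmannian), but it takes a noticeably different route. The paper's proof is one step: it forms the \emph{relative} Grassmannian $\Gr(r,F)\to X$ of rank-$r$ locally free quotients of $F$, observes that $\mathcal G$ is precisely a definable section $X^\df\to\Gr(r,F)^\df$, and algebraizes that section directly by \cite[Theorem 5.1]{PS} (citing a footnote that this extends to reduced algebraic spaces). You instead pass to an affine \'etale atlas where $F$ is trivial, reduce to a definable map to a \emph{constant} Grassmannian $\Gr(r,n)$, algebraize via the graph, and then re-descend. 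Your route costs an extra descent step, but it has a genuine compensating advantage: over an affine scheme you apply \cite[Theorem 5.1]{PS} in its original form and never need the ``generalizes to algebraic spaces'' extension the paper invokes. Your descent argument is also sound: the canonicity you need is just uniqueness of the algebraic subsheaf $E_U\subset F|_U$ with a prescribed definabilization, and this indeed follows from faithfulness plus exactness of $\Define$ (Lemma~\ref{defineexact}) applied to the composite $A\to F/B$ for two candidate subsheaves $A,B$ --- though you phrase it as ``full faithfulness on already-algebraized objects,'' it would be cleaner to spell out that one-line argument, since fullness itself is what the surrounding theorem is trying to establish.

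One small slip: after shrinking to an affine \'etale chart you say ``we may assume $X=\Spec A$ is a \emph{smooth} affine variety --- here I use reducedness of $X$.'' Reducedness does not give smoothness, and \'etale localization does not improve singularities; but nothing in your argument actually uses smoothness (local freeness of $F$ is a hypothesis, and \cite[Theorem 5.1]{PS} does not require a smooth ambient variety), so this is a harmless misstatement rather than a gap.
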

\begin{proof} Observe that $F^\df$ (and hence $F$) is locally free.  It is sufficient to construct the quotient $G$ and then define $E$ as the kernel of $F \ra G  \ra 0$. By working separately on every connected component of $X$, one can assume that $\mathcal{G}$ has constant rank $r$. Let $\mathrm{Gr}(r,F)$ be the Grassmannian of quotient modules of $F$ that are locally free of rank $r$. Then $\mathcal{G}$ corresponds to a definable section of $\mathrm{Gr}(r,F)^\df$, which is necessarily algebraic by Theorem \ref{defchow}, as $X$ is reduced.
\end{proof}

\vskip1em\noindent\emph{Step 2.}

\begin{lemma}  For some dense open $U\subset X$, $\mathcal{E}_{|U}$ is algebraic.	
\end{lemma}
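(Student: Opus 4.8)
The plan is to reduce to a generic statement where everything becomes free, so that Lemma \ref{vectgaga} applies. First I would pass to a dense open $U\subset X$ that is smooth (in particular $X$ is reduced, so the smooth locus is dense) and where both $F$ and $\mathcal{E}$ restrict to vector bundles: indeed, $F^\df$ is coherent and $\mathcal{E}\subset F^\df$ is a definable coherent subsheaf, hence $F^\df/\mathcal{E}$ is coherent, and a coherent sheaf on a reduced space is locally free on a dense open set — one should check that this standard fact holds in the definable category, which follows from Oka coherence (Theorem \ref{coherenceofo}) together with the fact that the locus where the fiber dimension jumps is a proper closed definable analytic subset, and by Corollary \ref{setinduction} such loci are well-behaved. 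After removing the non-free loci of $\mathcal{E}$, $F^\df/\mathcal{E}$, and (using smoothness) $F$ itself, we may shrink $U$ so that $U$ is a reduced (even smooth) algebraic space and over $U^\df$ we have a short exact sequence $0\to\mathcal{E}_{|U}\to F^\df_{|U}\to \mathcal{G}\to 0$ with all three terms locally free.

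Next I would apply Lemma \ref{vectgaga} directly: since $F_{|U}$ is a locally free algebraic sheaf and $\mathcal{E}_{|U}$, $\mathcal{G}$ are locally free definable coherent sheaves fitting into this exact sequence, Lemma \ref{vectgaga} produces an algebraic exact sequence $0\to E\to F_{|U}\to G\to 0$ whose definabilization is the given one. In particular $\mathcal{E}_{|U}=E^\df$ is algebraic, which is exactly the claim.

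The main obstacle I anticipate is the genericity input itself — namely verifying carefully that a definable coherent sheaf on a reduced definable analytic space $X^\df$ (arising as the definabilization of a reduced algebraic space) is locally free over the definabilization of some dense Zariski-open $U\subset X$. The subtlety is that ``dense open'' should be taken in the algebraic (Zariski) sense on $X$, not merely in the definable analytic sense on $X^\df$, so one wants the locus of non-local-freeness to be contained in (the definabilization of) a proper closed algebraic subset. For $F$ itself this is automatic since $F$ is algebraic, but for $\mathcal{E}$ and the quotient $F^\df/\mathcal{E}$ one needs that the set where a definable coherent sheaf fails to be free, while definable analytic and closed, is actually the analytification/definabilization of an algebraic subvariety — and here one can invoke the definable Chow theorem \cite[Theorem 5.1]{PS} (the non-free locus is a closed definable analytic subset of the algebraic variety $X$, hence algebraic) to replace it by a genuine Zariski-closed set, and then take $U$ to be the complement of all these loci together with the singular locus of $X$. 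Once $U$ is chosen this way, the rest is a direct appeal to Lemma \ref{vectgaga} and poses no difficulty.
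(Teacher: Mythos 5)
Your proposal is correct and follows the same route as the paper: pass to a dense Zariski-open where $F$ is locally free, observe that the non-maximal-rank loci of $\mathcal{E}$ and $F^\df/\mathcal{E}$ are closed definable analytic subsets hence algebraic by the Peterzil--Starchenko definable Chow theorem, shrink $U$ further, and then invoke Lemma \ref{vectgaga}. The detours through smoothness of $X$ and Corollary \ref{setinduction} are harmless but unnecessary --- local freeness of $F$ on a dense open only needs $X$ reduced, not smooth, and the key point (that the non-free locus is closed, definable, and analytic, hence algebraic) is exactly the one the paper makes.
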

\begin{proof}
On some dense open set $U$, $F$ is locally free since $X$ is reduced. The (reduced) locus where $\mathcal{E}$ and $F^\df / \mathcal{E}$ have non-maximal rank is definable, analytic, and closed,  hence algebraic by Theorem \ref{defchow}.   After possibly shrinking $U$ to a smaller dense open set, the claim then follows from the previous step. 
\end{proof}

\vskip1em\noindent\emph{Step 3.}\vskip1em

With the notation of the previous step, let $E_{U}$ be the algebraic sheaf on $U$ for which $(E_{U})^\define\cong \mathcal{E}_{|U}$.  Let $\widetilde E$ be the ``closure" of $E_{U}$ in $F$, i.e. the pullback
\[\xymatrix{
F\ar[r]&j_*j^*F\\
\widetilde E\ar[u]\ar[r]&j_*E_{U}\ar[u]
}\]
where $j: U \hookrightarrow X$ denotes the inclusion. The sheaf $\widetilde E$ is evidently quasi-coherent and so it is coherent since it is a subsheaf of $F$.  Thus, $\widetilde{E}^\df$ and $\mathcal{E}$ are both definable coherent subsheaves of $F^\df$, and therefore
so is their intersection $\mathcal{G}$.

Let $I_Z$ be the ideal sheaf of $Z=X\smallsetminus U$ with the reduced algebraic space structure, and $\mathcal{I}=I_Z^\df$.
\begin{lem}  Suppose we have definable coherent sheaves $\mathcal{G}\subset\mathcal{G}'$ for which $\mathcal{G}_{|U}=\mathcal{G}'_{|U}$.  Then for some positive integer $n$, $\mathcal{I}^n\mathcal{G}'\subset \mathcal{G}$.
\end{lem}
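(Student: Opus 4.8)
The statement is local on $X$, so I would reduce to the case where $X = \Spec A$ is affine and $\mathcal{G} \subset \mathcal{G}'$ are definable coherent sheaves agreeing over the complement $U$ of a closed subscheme $Z = V(I_Z)$. The key point is that since $\mathcal{G}$ and $\mathcal{G}'$ agree on $U$, the quotient $\mathcal{Q} := \mathcal{G}'/\mathcal{G}$ is a definable coherent sheaf supported set-theoretically on $Z^{\df}$. By the last lemma of the support discussion (the third bullet: the ideal of $\Supp(\mathcal{Q})$ is the annihilator ideal of $\mathcal{Q}$) together with Corollary \ref{nullstellensatz} applied to the closed subspace $\Supp(\mathcal{Q}) \subset X^{\df}$, we know that the ideal sheaf of $\Supp(\mathcal{Q})^{\reduced}$ has a power contained in the ideal defining $\Supp(\mathcal{Q})$; and since $\mathcal{Q}$ is supported on $Z^{\df}$, the reduced structure on $\Supp(\mathcal{Q})$ is a closed subspace of $Z^{\df}$, so $\mathcal{I} = I_Z^{\df}$ is contained in (a power of) the ideal of $\Supp(\mathcal{Q})^{\reduced}$.

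More directly: let $J \subset \O_{X^{\df}}$ be the annihilator ideal of $\mathcal{Q}$, which is a coherent ideal sheaf by the support lemma, and $J = I_{\Supp(\mathcal{Q})}$. Then $\Supp(\mathcal{Q})$ is set-theoretically inside $Z^{\df}$, so its underlying analytic set is contained in that of $Z^{\df} = V(\mathcal{I})$; hence $\mathcal{I} \subset \sqrt{J}$ in the sense that $\mathcal{I}$ vanishes on $\Supp(\mathcal{Q})^{\reduced}$. By the corollary immediately following the Nullstellensatz (which says that for a definable analytic subspace $W \subset X$, some power $I_{W^{\reduced}}^n$ is contained in $I_W$), applied to $W = \Supp(\mathcal{Q})$, we get $I_{\Supp(\mathcal{Q})^{\reduced}}^n \subset J$ for some $n$. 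Since $\mathcal{I}$ vanishes on $\Supp(\mathcal{Q})^{\reduced}$, we have $\mathcal{I} \subset I_{\Supp(\mathcal{Q})^{\reduced}}$, hence $\mathcal{I}^n \subset J = \operatorname{Ann}(\mathcal{Q})$. But $J \mathcal{Q} = 0$ means precisely $J \mathcal{G}' \subset \mathcal{G}$, so $\mathcal{I}^n \mathcal{G}' \subset \mathcal{G}$ as desired.

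The only subtlety worth flagging is the finite generation / coherence bookkeeping: one must make sure $\mathcal{Q}$ is genuinely a coherent sheaf (it is, by Lemma \ref{basiccoherence}, being the quotient of two coherent sheaves), that its support is a well-defined closed definable analytic subspace with the stated annihilator description, and that "agreeing on $U$" really does force $\Supp(\mathcal{Q})$ into $Z^{\df}$ set-theoretically (clear, since the stalks $\mathcal{Q}_p$ vanish for $p \in U$). I expect the main obstacle to be purely organizational — namely citing the Nullstellensatz corollary in exactly the right form and confirming that the reduced-induced-structure inclusion $\Supp(\mathcal{Q})^{\reduced} \hookrightarrow Z^{\df}$ behaves functorially enough to conclude $\mathcal{I} \subset I_{\Supp(\mathcal{Q})^{\reduced}}$ — rather than any genuinely hard analytic input, since all the real work is already packaged in Corollary \ref{nullstellensatz} and the support lemma.
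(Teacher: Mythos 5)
Your proposal is correct and takes essentially the same approach as the paper: the paper's proof is a terse one-liner that forms the quotient $\mathcal{Q}=\mathcal{G}'/\mathcal{G}$ and cites the Nullstellensatz to conclude $\mathcal{I}^n$ annihilates $\mathcal{Q}$, and your write-up simply unpacks the bookkeeping (support of $\mathcal{Q}$, annihilator ideal, the inclusion $\mathcal{I}\subset I_{\Supp(\mathcal{Q})^{\reduced}}$) that the paper leaves implicit.
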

\begin{proof}  Take the quotient
\[0\to \mathcal{G}'\to \mathcal{G}\to\mathcal{Q}\to0.\]
By Lemma \ref{nullstellensatz}, $\mathcal{I}^n$ kills $\mathcal{Q}$ for some positive integer $n$, and thus $\mathcal{I}^n\mathcal{G}\subset \mathcal{G}'$.
\end{proof}

Applying the lemma to $\mathcal{G}\subset \widetilde{E}^\df$, we have $(I_{Z}^n\widetilde{E})^\df\subset \mathcal{E}$ for some positive integer $n$.  The quotient $\mathcal{E}'$ is then a subsheaf of $(F')^\df$, where $F'=F/I_{Z}^n\widetilde{E}$ is supported on a subspace whose reduction is $Z$.  By the inductive hypothesis, $\mathcal{E}'$ is algebraic, and $\mathcal{E}$ is the preimage in $F$, hence algebraic, so the proof is complete. 
\end{proof}

As an immediate corollary to Theorem \ref{gaga}, we obtain a version of the definable Chow theorem for arbitrary (nonreduced) algebraic spaces.
\begin{cor}\label{defchowsub}Let $Y$ be an algebraic space and $\mathcal{X}\subset Y^\df$ a closed definable complex analytic subspace.  Then $\mathcal{X}$ is (uniquely) the definabilization of an algebraic subspace.
\end{cor}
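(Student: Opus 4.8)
The plan is to reduce the statement to Theorem \ref{gaga}(2) applied to the structure sheaf. A closed definable analytic subspace $\mathcal{X}\subset Y^\df$ is the same data as its ideal sheaf $\mathcal{I}\subset\O_{Y^\df}$ (with $\O_\mathcal{X}=\O_{Y^\df}/\mathcal{I}$): locally on a definable open $U$ the subspace $\mathcal{X}$ is cut out by a finitely generated ideal, so $\mathcal{I}$ is locally finitely generated. By Oka coherence (Theorem \ref{coherenceofo}) the sheaf $\O_{Y^\df}$ is coherent, and a locally finitely generated submodule of a coherent sheaf is coherent, so $\mathcal{I}$ is a definable coherent subsheaf of $\O_{Y^\df}$.

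Next I would use the canonical identification $\O_Y^\df\cong\O_{Y^\df}$ to regard $\mathcal{I}$ as a definable coherent subsheaf of $(\O_Y)^\df$. By Theorem \ref{gaga}(2) the essential image of $\Define$ is closed under subobjects, so there is a coherent subsheaf $I\subset\O_Y$ together with an isomorphism $I^\df\cong\mathcal{I}$ of subsheaves of $\O_{Y^\df}$. Being an $\O_Y$-submodule of $\O_Y$, the sheaf $I$ is an ideal sheaf, hence defines a closed algebraic subspace $X\subset Y$. Since definabilization is compatible with forming $V(-)$ of a coherent ideal sheaf, $X^\df$ is exactly the closed subspace of $Y^\df$ cut out by $I^\df=\mathcal{I}$, that is, $X^\df=\mathcal{X}$.

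For uniqueness, suppose $X'\subset Y$ is another closed algebraic subspace with $(X')^\df=\mathcal{X}$. Then $(I_{X'})^\df=\mathcal{I}=(I_X)^\df$ as subsheaves of $\O_{Y^\df}$, and full faithfulness of $\Define$ (Theorem \ref{gaga}(1)) forces $I_{X'}=I_X$, hence $X'=X$.

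I do not expect a serious obstacle here: the corollary is essentially Theorem \ref{gaga}(2) for $F=\O_Y$, and the only points that deserve a sentence of justification are the coherence of the ideal sheaf $\mathcal{I}$ of $\mathcal{X}$ and the fact that definabilization commutes with cutting out a subspace by a coherent ideal, so that the algebraic subspace $X=V(I)$ really satisfies $X^\df=\mathcal{X}$ as subspaces of $Y^\df$ and not merely abstractly.
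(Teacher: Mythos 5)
Your proof is correct and takes essentially the same route as the paper: the paper algebraizes the quotient $\O_{Y^\df}\to\O_{\mathcal{X}}$ while you algebraize the ideal sheaf $\mathcal{I}\subset\O_{Y^\df}$, but both are instances of Theorem \ref{gaga}(2), whose statement explicitly covers subobjects and quotients alike. Your version spells out the coherence of $\mathcal{I}$ and the compatibility of definabilization with $V(-)$, which the paper leaves implicit.
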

\begin{proof}We need only algebraize the quotient $\O_Y^\df\to\O_{\mathcal{X}}$, which follows from Theorem \ref{gaga}.
\end{proof}
\begin{cor}\label{defchowmap}Let $X,Y$ be algebraic spaces.  Then any morphism $X^\df\to Y^\df$ of definable complex analytic spaces is (uniquely) the definabilization of an algebraic morphism.
\end{cor}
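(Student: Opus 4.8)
\textbf{Proof strategy for Corollary \ref{defchowmap}.}
The plan is to reduce the statement about maps to the statement about closed subspaces, i.e.\ Corollary \ref{defchowsub}, via the graph construction. Given a map $\phi: X^\df\to Y^\df$ of definable analytic spaces (where $X,Y$ are algebraic spaces), I would first form the graph morphism $\Gamma_\phi = (\id_{X^\df},\phi): X^\df \to X^\df\times_\C Y^\df = (X\times_\C Y)^\df$. Since $Y$ is separated, $Y^\df$ is separated as a definable analytic space, so the diagonal is a closed immersion and hence $\Gamma_\phi$ is a closed immersion of definable analytic spaces; its image $\mathcal{Z}\subset (X\times Y)^\df$ is a closed definable analytic subspace.

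Next I would apply Corollary \ref{defchowsub} to $\mathcal{Z}\subset (X\times Y)^\df$: there is a unique closed algebraic subspace $Z\subset X\times Y$ with $Z^\df\cong \mathcal{Z}$ compatibly with the inclusions. It remains to check that $Z$ is the graph of an algebraic morphism, i.e.\ that the projection $p_1: Z\to X$ is an isomorphism of algebraic spaces. By construction $p_1^\df: Z^\df\cong\mathcal{Z}\to X^\df$ is an isomorphism, being identified with $\Gamma_\phi^{-1}$ composed with $\pr_1$, which is the inverse of the graph. Since definabilization is faithful and exact and detects isomorphisms on structure sheaves (one can check that $p_1$ is an isomorphism by checking the induced map on structure sheaves is an isomorphism after definabilization, using that $\Define$ is faithful and exact, hence conservative, together with the fact that $p_1$ is already known to be a proper monomorphism hence a closed immersion, so it suffices to see it is surjective), it follows that $p_1$ itself is an isomorphism. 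Then $\phi$ is the definabilization of the algebraic morphism $p_2\circ p_1^{-1}: X\to Y$. Uniqueness follows from faithfulness of $\Define$ (Lemma \ref{defineexact}): two algebraic maps with the same definabilization have the same graph, which is algebraic and uniquely determined by $\mathcal{Z}$.

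The main obstacle I anticipate is the descent step where $X, Y$ are genuine algebraic spaces rather than schemes: one must make sure the graph and its algebraization behave well under the \'etale presentations used to define $(-)^\df$ on algebraic spaces, and that "$p_1$ is an isomorphism" can indeed be checked after definabilization. For the latter, the cleanest route is to note that $p_1: Z\to X$ is a proper (even finite, being a closed immersion composed with a projection restricted to a graph, it is a monomorphism and proper) morphism of algebraic spaces whose definabilization is an isomorphism; since $\Define$ is faithful and exact it is conservative on coherent sheaves, so the cone of $\O_X\to (p_1)_*\O_Z$ definabilizes to $0$ hence is $0$, giving that $p_1$ is an isomorphism. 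The \'etale-local issues are handled by Corollary \ref{etequiv}, which lets one work with \'etale charts, exactly as in the construction of the definabilization functor for algebraic spaces.
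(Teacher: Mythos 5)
Your proposal takes exactly the paper's route: the paper's entire proof reads ``Apply the previous corollary to the graph.'' Your elaboration---forming the graph $\mathcal{Z}\subset (X\times Y)^\df$, algebraizing it to $Z$ via Corollary~\ref{defchowsub}, and then verifying that $p_1\colon Z\to X$ is an isomorphism by observing that $p_1^\an$ is a homeomorphism (hence $p_1$ is proper) and that $\O_X\to (p_1)_*\O_Z$ becomes an isomorphism after definabilization, so that conservativity of $\Define$ forces $p_1$ to be an isomorphism---correctly fills in the details the paper leaves implicit.
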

\begin{proof}Apply the previous corollary to the graph.
\end{proof}

 \section{Definable images}\label{sectiondefimage}
 
 The purpose of this section is to prove an algebraization theorem for definable images of algebraic spaces.  
 
 \subsection{Main statement}
 
 For convenience we make the following definition.
\begin{defn}A morphism $f:X\to Y$ of algebraic spaces is dominant if $\O_Y\to f_*\O_X$ is injective.
\end{defn} 
 Note that a proper dominant morphism is surjective on complex points.  Our goal is to prove the following result.
 
 \begin{thm}\label{defpropmap}

Let $X$ be an algebraic space, $\mathcal{S}$ a definable complex analytic space, and $\phi: X^{\df}\ra \mathcal{S}$ a proper definable complex analytic morphism.  Then there exists a (unique) factorization
\[\begin{tikzcd}
X^\df\arrow{rr}{\phi}\arrow{dr}[swap]{f^\df}&&\mathcal{S}.\\
&Y^\df\arrow[ru,hook,swap,"\iota"]&
\end{tikzcd}\]
where $f:X\to Y$ is dominant algebraic and $\iota$ is a definable closed immersion.  Moreover, $\iota^{\an}(Y^{\an})$ coincides with the image $\phi^{\an}(X^{\an})$.

\end{thm}	
\begin{remark}\label{unicity}
The uniqueness property in Theorem \ref{defpropmap} is the following:  for any other factorization $\phi=\iota'\circ f'^\df$ with $f':X\to Y'$ dominant and $\iota':Y'^\df\to \mathcal{S}$ a definable closed immersion there is a unique isomorphism $g:Y\to Y'$ for which the following diagram commutes.
\[
\begin{tikzcd}
X^\df\arrow[dddr,swap,"f'^\df"]\arrow{rr}{\phi}\arrow{dr}{f^\df}&&\mathcal{S}.\\
&Y^\df\arrow[dd,"g^\df"]\arrow[ru,hook,"\iota"]&\\
&&\\
&Y'^\df\arrow[ruuu,hook,swap,"\iota'"]&
\end{tikzcd}
\]
\end{remark}
\begin{remark}

We expect the theorem to hold without the properness assumption on $\phi$.

\end{remark}

\subsection{First reductions in the proof of Theorem \ref{defpropmap}}\label{sectrestrict}

The proof of Theorem \ref{defpropmap} involves two main steps:  the case that $X$ is reduced and the reduction to this case.  In this subsection we give the proof of Theorem \ref{defpropmap} assuming Propositions \ref{sqzerolift} and \ref{defhilbert} below, which are the main steps in these two reductions.  The proofs of these propositions are given in the subsequent subsections.

For a closed algebraic subspace $X\subset X'$ for which $X$ and $X'$ have the same associated topological space, we say $X'$ is a thickening of $X$ (see for example \cite[\href{https://stacks.math.columbia.edu/tag/05ZK}{Tag 05ZK}]{stacks-project}), and a square-zero thickening if the ideal $I$ of $X$ in $X'$ is square-zero.  Likewise for definable complex analytic spaces and analytic spaces. 

The following proposition allows us to lift algebraizations through definable thickenings.

\begin{prop}\label{sqzerolift}

Let $f:W\ra Z$ be a proper dominant morphism of algebraic spaces. Suppose we have an algebraic square-zero thickening $W\ra W'$, a 
definable closed immersion $Z^\df\ra \mathcal{Z}'$, and a morphism $\phi':W'^\df\ra \mathcal{Z}'$ which fits into a commutative diagram  
\[\xymatrix{
 W^\df\ar[d]_{f^{\df}}\ar[r]&W'^\df\ar[d]^{\phi'}\\
 Z^\df\ar[r]&\mathcal{Z}'
 }\] 
 
Then the following are uniquely defined: an algebraic square-zero thickening $Z\to Z''$, a definable closed immersion $Z''^\df\to \mathcal{Z}'$, and a (proper) dominant morphism $f':W'\to Z''$ of algebraic spaces, such that we have commutative diagrams

\[\xymatrix{
 W\ar[dd]_{f}\ar[r]&W'\ar[dd]^{f'}		&&		W'^\df\ar[rd]^{\phi'}\ar[dd]_{f'^\df}&\\
 &		&&		&\mathcal{Z}'\\
 Z\ar[r]&Z''		&&		Z''^\df\ar[ur]&.
 }\] 

\end{prop}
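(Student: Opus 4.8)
The plan is to reduce the statement to Artin's algebraization theorem for formal modifications (applied over the algebraic space $Z$), using definable GAGA to produce the required coherent data. First I would analyze the linear-algebra structure of the thickenings. Let $I \subset \O_{W'}$ be the (square-zero) ideal of $W$ in $W'$, so $I$ is a coherent $\O_W$-module, and let $\mathcal{J} \subset \O_{\mathcal{Z}'}$ be the ideal of $Z^\df$ in $\mathcal{Z}'$, a definable coherent $\O_{Z^\df}$-module. The map $\phi'$ together with the square $f^\df$ gives, by restriction to ideals, an $\O_{Z^\df}$-linear map $\mathcal{J} \to f^\df_* I$ (where I use that $f^\df_* = (f_* \,\cdot\,)^\df$ on coherent sheaves along the proper dominant $f$, via GAGA compatibilities). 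The natural candidate for $Z''$ is the pushout: one wants $\O_{Z''}$ to be the preimage in $f_* \O_{W'}$ of $\O_Z \subset f_* \O_W$, i.e. $\O_{Z''} = \O_Z \times_{f_*\O_W} f_*\O_{W'}$, a square-zero thickening of $Z$ by the coherent $\O_Z$-ideal $\mathcal{K} := \ker(f_*\O_{W'} \to f_*\O_W)$. The point is that this fiber product is a sheaf of algebras whose underlying module is coherent (it sits in an exact sequence $0 \to \mathcal{K} \to \O_{Z''} \to \O_Z \to 0$ with $\mathcal{K} \subset f_* I$ coherent by properness of $f$), hence defines an algebraic square-zero thickening $Z \hookrightarrow Z''$; functoriality gives the proper dominant map $f' : W' \to Z''$ lifting $f$.

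Next I would produce the map $Z''^\df \to \mathcal{Z}'$. On underlying spaces it is the identity of $Z^\df$ (both thickenings are topologically $Z^\df$), so what is needed is a map of structure sheaves $\O_{\mathcal{Z}'} \to \O_{Z''^\df} = (\O_{Z''})^\df$ compatible with $\phi'^\sharp$ after pulling back to $W'^\df$. The universal property of the fiber product defining $\O_{Z''}$ should force this: the definabilization $(\O_{Z''})^\df = (\O_Z)^\df \times_{(f_*\O_W)^\df} (f_*\O_{W'})^\df$ still holds because $\Define$ is exact and commutes with the relevant pushforwards (Lemma \ref{finitepush}-type statements, or more directly the GAGA fiber-product compatibility), and $\O_{\mathcal{Z}'}$ maps compatibly to both $\O_{Z^\df}$ and, via $\phi'^\sharp$ and $f$-descent, to $(f_*\O_{W'})^\df$; hence it maps to the fiber product. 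That $Z \hookrightarrow Z''$ definabilizes to a \emph{square-zero} thickening inside $\mathcal{Z}'$ follows since $\mathcal{K}^\df$ maps into $\mathcal{J}$ and the square-zero condition is about $\mathcal{K}^2 = 0$, which is preserved. Uniqueness in every clause follows from faithfulness of $\Define$ (Lemma \ref{defineexact}) together with the universal property of the pushout, since any two solutions receive compatible maps from the same data and $\Coh$-level descent along $f$ is rigid.

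The main obstacle I expect is the coherence and correct definabilization of the pushout algebra $\O_{Z''}$ — specifically, verifying that $f_* I$ and $\mathcal{K} = \ker(f_*\O_{W'} \to f_*\O_W)$ are coherent, and that definabilization commutes with these pushforwards along the proper dominant map $f$. The excerpt provides this only for \emph{finite} maps (Corollary \ref{finiteexact}, Lemma \ref{finitepush}), so for a general proper dominant $f$ one must instead argue with the algebraic pushforward and invoke Theorem \ref{gaga} (definable GAGA) to transfer between the algebraic $\O_Z$-module $\mathcal{K}$ and its definabilization sitting inside $\O_{\mathcal{Z}'}$; equivalently, one realizes $Z''$ purely algebraically as the pushout $W' \sqcup_W Z$ in algebraic spaces (which exists and is a square-zero thickening of $Z$ by standard deformation theory, cf. the Artin/Ferrand gluing), and then only needs definable GAGA and the descent Corollary \ref{mapdescent} to compare its definabilization with $\mathcal{Z}'$. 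Framed this way, the genuinely new content is the interaction of $\phi'$ with the algebraic pushout, and the argument is essentially formal once the pushout is available; I would set it up by first constructing $Z'' := W' \sqcup_W Z$ algebraically, then constructing $Z''^\df \to \mathcal{Z}'$ by the universal property and descent, and finally reading off uniqueness from faithfulness of definabilization.
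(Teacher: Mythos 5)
Your construction of the algebraic candidate $Z_0 := W'\sqcup_W Z$ with $\O_{Z_0}=\O_Z\times_{f_*\O_W}f_*\O_{W'}$ is indeed the intermediate object the paper builds (and which Lemma \ref{schemethick} shows is an algebraic space), but it is \emph{not} the sought-for $Z''$, and your proposal has a genuine gap at precisely the point where the main content lies. The proposition requires $Z''^\df\to\mathcal{Z}'$ to be a definable square-zero \emph{thickening}, i.e.\ a closed immersion, which is equivalent to $\O_{\mathcal{Z}'}\to\O_{Z''^\df}$ being surjective, i.e.\ to the ideal $\mathcal{J}$ of $Z^\df$ in $\mathcal{Z}'$ surjecting onto the ideal of $Z^\df$ in $Z''^\df$. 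For $Z_0$ that ideal is $(f_*I)^\df$, and there is no reason at all for $\mathcal{J}\to (f_*I)^\df$ to be onto. Your stated justification --- that ``$\mathcal{K}^\df$ maps into $\mathcal{J}$'' --- is backwards: an inclusion of the ideal into $\mathcal{J}$ is not what makes the embedding exist; a surjection $\mathcal{J}\twoheadrightarrow\mathcal{K}^\df$ is, and that fails for the pushout. The paper explicitly flags this (``Note that $Z_0^\df\to\mathcal{Z}'$ may well not be immersive'') and resolves it by replacing $Z_0$ with the \emph{image} of $Z_0^\df$ in $\mathcal{Z}'$: they take the image sheaf $\mathcal{T}$ of $\O_{\mathcal{Z}'}\to R^\df$, identify its kernel $\mathcal{K}\subset J^\df$ as the definabilization of an algebraic $K\subset J$ by Theorem \ref{gaga}, and then algebraize $\mathcal{T}$ itself by a second application of Theorem \ref{gaga} inside $(R/K)^\df$. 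This step --- algebraizing a definable coherent subsheaf of a definabilized algebraic sheaf --- is the substantive use of definable GAGA in the proposition, and your proposal elides it entirely.

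There is a secondary issue you yourself flag as an ``obstacle'' but never resolve: to know that your fiber product even surjects onto $\O_Z$ (so that $Z\to Z_0$ is a thickening) one must show the image of $\O_Z$ in $f_*\O_W$ lies inside the image of $f_*\O_{W'}\to f_*\O_W$. The paper proves this by routing through \emph{analytification} rather than definabilization: they check that the coboundary $f_*\O_W\to R^1f_*I$ analytifies to the analytic coboundary (via \v Cech), use classical GAGA for $R^if_*$ of a proper map together with the surjectivity of $\O_{\mathcal{Z}'^\an}\to\O_{Z^\an}$, and then transfer back with Theorem \ref{anfaithful}. This neatly sidesteps the need for a definable-GAGA compatibility of higher pushforwards along general proper $f$, which --- as you correctly observe --- the paper does not establish and which your proposal would require.
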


For an algebraic space $X$, let $\Hilb(X)$ be the Hilbert space (see for example \cite[\href{https://stacks.math.columbia.edu/tag/0D01}{Tag 0D01}]{stacks-project}) of proper algebraic subspaces of $X$, and $\mathcal{D}(X^{\an})$ be the Douady space of compact analytic subspaces of $X^{\an}$ \cite{Douady}.  Since the analytification of a flat family of proper algebraic subspaces of $X$ yields a flat family of compact analytic subspaces of $X^{\an}$, the universal family on $\Hilb(X)$ yields a canonical analytic map $\Hilb(X)^{\an} \rightarrow \mathcal{D}(X^{\an})$, which is a bijection on points since every compact analytic subspace of $X^{\an}$ is algebraic by ordinary GAGA (or for example Theorem \ref{defchow}).  As the functors represented by $\Hilb(X)$ and $\mathcal{D}(X^\an)$ are the same over artinian rings (since by GAGA again the deformation spaces are the same and the algebraic obstruction clearly analytifies to the analytic one), $\Hilb(X)^{\an} \rightarrow \mathcal{D}(X^{\an})$ is in fact an isomorphism.

The next proposition essentially says that when $X$ is reduced in the setup of Theorem \ref{defpropmap}, the morphism $\phi$ is generically algebraic.  

\begin{prop}\label{defhilbert}
Let $X$ be a smooth algebraic space, $\mathcal{U}$ a smooth definable complex analytic space, and $\phi:X^\df\to\mathcal{U}$ a smooth proper definable analytic morphism.  Then $\phi:X^\df\to\mathcal{U}$ is the definabilization of an algebraic morphism $f:X\to U$ and the associated morphism $U\to \Hilb(X)$ is a closed embedding.
\end{prop}

\begin{proof}[Proof that Propositions \ref{sqzerolift} and \ref{defhilbert} imply Theorem \ref{defpropmap}]The uniqueness property follows immediately from Proposition \ref{prop mapdescent}, Corollary \ref{defchowmap}, and the corresponding uniqueness property of the analytic image, so we need only prove the existence of such a factorization.

The proof proceeds by induction on $\dim X$, the base case being trivial.  By repeatedly applying Proposition \ref{sqzerolift}, we may assume $X$ is reduced.  By Remmert's theorem, the analytic image $\phi^\an(X^\an)$ is an analytic subvariety of $\mathcal{S}^\an$.  By Propposition \ref{defanset} there is a unique structure of a reduced definable complex analytic space on $\phi^\an(X^\an)$ through which $\phi$ factors, and so replacing $\mathcal{S}$ with $\phi^\an(X^\an)$ we may assume $\phi^\an$ is surjective on points and $\mathcal{S}$ is reduced.  By Corollary \ref{defchowmap} it suffices to algebraize $\mathcal{S}$.

We next reduce to the case that the analytification of $\phi$ is a proper modification---that is, $\phi^\an:X^\an\to \mathcal{S}^\an$ is proper and induces an isomorphism $(\phi^{\an})^{-1}(\mathcal{U})\to\mathcal{U}$ for a dense analytic Zariski open subset $\mathcal{U}\subset\mathcal{S}^{\an}$. For this reduction it is enough to assume $\mathcal{S}$ is (analytically) irreducible:  if $\mathcal{S}_k$ is the image of an irreducible component $X_k$ of $X$, and if there is a proper modification $\psi:B_k^\df\to\mathcal{S}_k$, then we may replace $\phi$ with the proper modification $\sqcup_k\psi_k:\sqcup_k B_k\to\mathcal{S}$.  We may additionally assume $X$ is smooth and irreducible by replacing $X$ with a component of a resolution which dominates $\mathcal{S}$.   

Observe that $\phi$ is smooth over a dense smooth definable Zariski open subset $\mathcal{U}\subset\mathcal{S}$.  Indeed, the regular locus $\mathcal{S}^\mathrm{reg}\subset \mathcal{S}$ is a dense definable Zariski open subset, and the smooth locus over $\mathcal{S}^\mathrm{reg}$ is determined by the rank of the Jacobian.  Both of these conditions are clearly definable on covers by basic definable analytic varieties.  Let $(X_\mathcal{U})^\df=\phi^{-1}(\mathcal{U})$, which is algebraic by Theorem \ref{defchow}.  Applying Proposition \ref{defhilbert} we conclude that $\phi_\mathcal{U}:X_\mathcal{U}^\df\to\mathcal{U}$ is the definabilization of $f_U:X\to U$ and that $U\to\Hilb(X_\mathcal{U})$ is a closed embedding.  Evidently $f_U$ is the restriction of the universal family.

Obviously $\Hilb(X_\mathcal{U})$ is an open subset of $\Hilb(X)$.  Let $B$ be the closure of the image of $U$ in $\Hilb(X)$, $V_B\to B$ the restriction of the universal family, $\tilde{B}\to B$ the normalization, and $V_{\tilde{B}}\to\tilde B$ the base-change of the universal family.
We then have solid diagrams
\[\begin{tikzcd}
&&V_{\tilde B}\ar{ld}\ar{rd}&&&&\\
&V_{B}\ar{rd}\ar{ld}&  	&\tilde{B}\ar{ld}&  	&(V_{\tilde{B}})^\df\ar{dd}\ar{ld}\ar{rd}&\\
X&&B 	&&	X^\df\ar{rd}&&\tilde{B}^\df.\ar[ld,"\psi",dashed]\\
&&   	&&  	&\mathcal{S}&
\end{tikzcd}\] 

The compact complex analytic subspaces of $X^\an$ that are contained in a fibre of $\phi^\an$ form a closed analytic subset of $\mathcal{D}(X^{\an})$. Since it contains the image of $U^\an$, it contains its closure $B^\an$. It follows that the vertical arrow $V_{\tilde{B}}^\df \rightarrow \mathcal{S}$ set-theoretically factors through $\tilde B^\an$, hence topologically factors through $\tilde B^\an$ (since we get a continuous map by precomposing with the open map $V_{\tilde{B}}^\an \rightarrow \tilde B^\an$), hence analytically factors through $\tilde B^\an$ (since by the normality of $\tilde B$, the holomorphic functions on an open subset of $\tilde B^\an$ are the continuous functions that are holomorphic in restriction to the trace of $(B')^\an$), hence definably factors through $\tilde{B}^\df$ by Proposition \ref{prop mapdescent}.  The resulting morphism $\psi$ is a proper modification, as it is proper (since $V^\df_{\tilde B}\to\mathcal{S}$ is proper and $V_{\tilde{B}}\to \tilde{B}$ is surjective) and an isomorphism over $\mathcal{U}$.

\item\vskip1em
We may therefore assume that the analytification of $X^\df\to\mathcal{S}$ is a proper modification.  By the inductive hypothesis and Theorem \ref{defchow}, the center\footnote{That is, the image of the exceptional locus in $X^\df$.} of $X^\df\to \mathcal{S}$ can be algebraized, so let $Z^{\df}\subset\mathcal{S}$ be the reduced center, and $W^\df=\phi^{-1}(Z^\df)$ equipped
with its reduced induced structure. For every positive integer $k$ let $W_k$ be the $k$th order thickening of $W$, and  $\mathcal{Z}_k$ the $k$th order thickening of $Z^{\df}$ in $\mathcal{S}$.

\def\colim{\operatorname{colim}}

By the inductive hypothesis, the induced morphism $\phi_k:W_k^\df\to \mathcal{S}$ factors as $W_k^\df\xrightarrow{f_k^\df} Z_k^\df\to \mathcal{S}$ where the first map is dominant and the second map is a closed immersion---in particular a definable thickening of $Z$ by Theorem \ref{gaga}.  Let $\bar W$ be the completion of $X$ along $W$ and $\bar Z=\colim Z_k$, both of which are formal algebraic spaces (see e.g. \cite[\href{https://stacks.math.columbia.edu/tag/0AIL}{Tag 0AIL}]{stacks-project}); we then have a morphism $\bar f:\bar W\to \bar Z$.  Let $\bar{\mathcal{Z}^\an}$ be the completion of $\mathcal{S}^\an$ along $\mathcal{Z}^\an$, which is a formal analytic space.  By the theorem on formal functions in the analytic category (see e.g. \cite[Corollary 4.5]{banica}), the natural map $(\phi^\an_*\O_{X^\an})^\wedge\to \lim \phi_*\O_{W_k^\an}$ is an isomorphism, and therefore the $Z_k^\an$ are cofinal in the $\mathcal{Z}^\an_k$.  Thus, the natural map $(\bar Z)^\an\to\bar{\mathcal{Z}^\an}$ is an isomorphism.

\begin{lem}\label{formal mod}
The morphism $\bar f:\bar W\to \bar Z$ is a formal modification.
\end{lem}
\begin{proof}
We refer to \cite[Definition (1.7)]{A} for the notion of a formal modification.  The proof of the claim is the same word for word as in \cite[Lemma (7.7)]{A}, using that $X^{\an}\ra \mathcal{S}^\an$ is a proper modification, as the verification\footnote{Note that Artin uses the notation 
\[
\xymatrix@R-5ex{
X'\ar[r]&X &&X\ar[r]&S\\
&&\mbox{for our}&&\\
Y'\ar[r]\ar[uu]&Y\ar[uu] && W\ar[r]\ar[uu]&Z.\ar[uu]
}
\]
} is entirely Zariski-local on $Z$.
\end{proof}

\begin{remark}
The reason \cite[Lemma (7.7)]{A} requires compactness is because compact spaces have at most \emph{one} algebraization, making the statement a lot cleaner. Note that we are essentially getting around this non-uniqueness issue by working with the ambient structure of a \emph{definable} complex analytic space, which is compatible with at most one algebraization.
\end{remark}

By the following theorem of Artin, we then conclude from Lemma \ref{formal mod} that $\phi :X^\an\to \mathcal{S}^\an$ is algebraized by $f:X\to S$, and by Proposition \ref{prop mapdescent} we have $S^{\df}\cong \mathcal{S}$.

\begin{thm}[Artin {\cite[Theorem (3.1)]{A}}]
Let $X$ be an algebraic space of finite type over a field, and $W\subset X$ be a closed subspace. Let $\bar{W}$ denote the formal completion of $X$ along $W$, and suppose that $\bar f:\bar{W}\rightarrow \bar{Z}$ is a formal modification. Then there is a modification $f:X\ra S$ which is an isomorphism on the complement of $W$ and such that the completion of $f$ along $W$ is isomorphic to $\bar f$.
\end{thm}

\end{proof}

\subsection{Proof of Proposition \ref{sqzerolift}} 

We have an exact sequence of coherent $\O_{W'}$-modules on $W'$: 
\begin{equation}0\ra I\ra \O_{W'}\ra \O_W\ra0\label{sequenceW}\end{equation} 
where both $I$ and $\O_W$ are coherent $\O_W$-modules. Taking the analytification on $W'$, by Theorem \ref{anfaithful} we get a sequence of coherent $\O_{W'^{\an}}$-modules\footnote{Note that the analytifications of $\O_W,I$ as $\O_{W'}$-modules are naturally the analytifications as $\O_W$-modules.} :

\begin{equation}0\ra I^{\an}\ra \O_{W'^{\an}}\ra \O_{W^{\an}}\ra0.\label{sequenceWan}\end{equation} 

 Viewing \eqref{sequenceW} (resp. \eqref{sequenceWan}) in the category of sheaves of abelian groups on $W$ (resp. $W^\an$), we have a natural coboundary map of sheaves $ f_*\O_W \xrightarrow{\partial} R^1f_*I$ (resp. $f^{\an}_*\O_{W^\an} \xrightarrow{\partial'} R^1f^{\an}_*I^{\an}$). Furthermore, the sheaves $f_*\O_W$ and $R^1f_*I$ (resp. $f^{\an}_*\O_{W^\an}$ and $R^1f^{\an}_*I^{\an}$) canonically have the structure of $\O_Z$-modules (resp. $\O_{Z^\an}$-modules).  Finally, we have natural morphisms of $\O_{Z^\an}$-modules $(f_*\O_W)^\an\to f^\an_*\O_{W^\an}$ and $(R^1f_*I)^\an\to R^1f^\an_*I^\an$ which are isomorphisms by ordinary GAGA (see e.g. \cite[Th\'eor\`eme 5.10]{toe} for the statement for algebraic spaces---here we are using that $f$ is compactifiable).

Observe that $\O_{\mathcal{Z'}^\an}$ surjects onto $\O_{Z^{\an}}$ by Theorem \ref{anfaithful} as $\O_{\mathcal{Z'}}$ surjects onto $\O_{Z^\df}$.  Thus, we have a commutative diagram of homomorphisms of $\O_{\mathcal{Z'^\an}}$-modules with exact rows
\begin{equation}\label{surject}
\begin{tikzcd}
  f^\an_*\O_{W'^\an}\arrow[r]&f^\an_*\O_{W^\an}\arrow{r}{\partial'}&R^1f^\an_*I^\an\\
  \O_{\mathcal{Z'^\an}}\arrow[r]\arrow[u]&\O_{Z^\an}\arrow[r]\arrow[u]&0
\end{tikzcd}
\end{equation}
where the first row comes from the long exact sequence associated to \eqref{sequenceWan}.  

\begin{lemma}\label{an cobound}

The coboundary map $f_*\O_W\xrightarrow{\partial} R^1f_*I$ associated to \eqref{sequenceW} is a homomorphism of $\O_Z$-modules and analytifies to the coboundary map $f^{\an}_*\O_{W^\an}\xrightarrow{\partial'} R^1f^{\an}_*I^{\an}$ associated to \eqref{sequenceWan}.

\end{lemma}

\begin{proof}
Both statements are local on $Z$, so we replace $Z$ with an affine (\'etale) open.  By the Leray spectral sequence (and the fact that affines are Stein), the canonical $\C$-linear maps $H^1(W , I)  \to H^0(Z, R^1f_*I)$ and $H^1(W^\an,I^\an)  \to H^0(Z^\an, R^1f^{\an}_*I^{\an})$ are isomorphisms. It is therefore enough to prove the corresponding two statements for the coboundary map on global cohomology. Taking the cohomology of \eqref{sequenceW} and \eqref{sequenceWan}, it follows that we have a natural diagram
\begin{equation}\label{partialsq}\begin{tikzcd}
  H^0(W,\O_W)\arrow{d}\arrow{r}{\partial}&H^1(W,I)\arrow{d}\\
  H^0(W^\an,\O_{W^\an})\arrow{r}{\partial'}&H^1(W^\an,I^\an).
\end{tikzcd}\end{equation}
As the cohomology of algebraic and analytic coherent sheaves can  both be computed via \v{C}ech cohomology with respect to an affine (\'etale) cover, it follows that this diagram commutes.  

Note that for any algebraic coherent sheaf $F$ on $W$, the map $H^0(W,F)\to H^0(W^\an,F^\an)$ is injective since the maps $F_w\to F_w^\an$ on stalks are injective.  This together with the ordinary GAGA isomorphisms mentioned above imply both vertical maps are injective. Now, by a diagram chase in \eqref{surject} we have that $H^0(Z^\an,\O_{Z^\an})$ is killed by $\partial'$, and therefore that $H^0(Z,\O_Z)$ is killed by $\partial$.  Since $\partial$ is a derivation, it follows that it is a homomorphism of $H^0(Z,\O_Z)$-modules, and the second claim follows from the commutativity of \eqref{partialsq} and the ordinary GAGA isomorphisms.
\end{proof}

Let $F$ be the image of $f_*\O_{W'}\to f_*\O_W$, which is also the kernel of $f_*\O_W\ra R^1f_*I$.  By the preceding lemma, $F$ is an $\O_Z$-module and analytifies to the kernel of $f^\an_*\O_{W^\an}\ra R^1f^\an_*I^\an$.  From \eqref{surject}, $F^\an$ contains the image of $\O_{Z^\an}$ in $f^\an_*\O_W^{\an}$, and so by faithfulness of ordinary analytification $F$ contains the image of $\O_Z$ in $f_*\O_W$.  We define the sheaf of rings $R$ on $Z$ as $R=\O_Z\oplus_{f_*\O_W} f_*\O_{W'}$. It follows that $R$ surjects onto $\O_Z$, with square-zero kernel $J=f_*I$. 

\begin{lemma}\label{schemethick}

Suppose $Z$ is an algebraic space, and $J$ is a coherent sheaf on $Z$.  Let $R$ be a sheaf of rings on the \'etale site $Z^{\mathrm{\acute{e}t}}$ of $Z$ such that
\[0\ra J\ra R\ra \O_Z\ra 0\]
is a first order thickening\footnote{Recall this means that $R\ra \O_Z$ is a homomorphisms of sheaves of rings and that $J$ with its induced ideal structure is of square zero.}. Then $(Z^{\mathrm{\acute{e}t}},R)$ is an algebraic space.
\end{lemma}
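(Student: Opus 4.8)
The plan is to show that the ringed site $(\underline{Z}, R)$ is an algebraic space by exhibiting an étale chart, using the fact that square-zero thickenings are controlled by deformation theory and that étale morphisms lift uniquely across nilpotent thickenings. First I would pick an étale surjection $p\colon V \to Z$ from a scheme $V$ (which exists since $Z$ is an algebraic space). Since $R \to \O_Z$ is a square-zero thickening by the quasi-coherent ideal $J$, and étale morphisms are formally étale, there is a unique sheaf of rings $R_V$ on $\underline{V}$ fitting into $0 \to p^*J \to R_V \to \O_V \to 0$ together with a map $(\underline{V}, R_V) \to (\underline{Z}, R)$ lifting $p$; concretely one takes the trivial (split) square-zero extension étale-locally and glues, or more invariantly one uses that the category of étale $\O_Z$-algebras is equivalent to the category of étale $R$-algebras (topological invariance of the étale site under nilpotent thickenings). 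The scheme $V' := (\underline{V}, R_V)$ is a scheme since it is a square-zero thickening of the scheme $V$ by a quasi-coherent module, hence affine-locally of the form $\Spec(A \oplus M)$ type extensions which are manifestly schemes.

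Next I would check that $V' \to (\underline{Z}, R)$ is étale and surjective, and that the fiber product $V' \times_{(\underline{Z},R)} V'$ is again a scheme, so that $(\underline{Z}, R)$ is presented as a quotient of a scheme by an étale equivalence relation of schemes — i.e., an algebraic space. Surjectivity is clear since it is so modulo the nilpotent ideal. For étaleness, one again invokes topological invariance: a morphism of thickenings is étale if and only if the reduced (here: mod-$J$) morphism is, and $V \to Z$ was chosen étale. For the fiber product, note that $R := \O_Z \oplus_{f_*\O_W} f_*\O_{W'}$ — wait, in the abstract setting of the lemma we just have $R$ with $0 \to J \to R \to \O_Z \to 0$; the fiber product $V' \times_{(\underline Z, R)} V'$ is the thickening of $V \times_Z V$ (a scheme, since $Z$ is an algebraic space) determined by pullback of $J$, hence a scheme by the same reasoning as for $V'$. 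The two projections are étale, being base changes of $V' \to (\underline Z, R)$.

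The main obstacle I expect is making precise the claim that the étale site $\underline{Z}$ of the algebraic space $Z$ is literally the same as the étale site underlying $(\underline{Z}, R)$ — that is, that $R$-algebras étale over $R$ are equivalent to $\O_Z$-algebras étale over $\O_Z$, compatibly with the topology, so that an étale chart for $Z$ furnishes one for $(\underline Z, R)$. This is the topological invariance of the étale topos under infinitesimal extensions, which holds for ringed topoi with square-zero (indeed nilpotent) ideal; for schemes it is standard (\cite[Tag 04DY]{stacks-project}-type statements) and the algebraic-space case reduces to it via an étale atlas. Once that equivalence is in hand, the verification that the resulting $(\underline Z, R)$ satisfies the definition of an algebraic space — a sheaf on the big étale site of $\Spec \C$ with a representable, étale, surjective map from a scheme and diagonal representable by schemes — is formal: one transports the atlas $V \to Z$ to $V' \to (\underline Z, R)$ and checks the equivalence relation $V' \times_{(\underline Z, R)} V' \rightrightarrows V'$ is an étale equivalence relation in schemes, which it is because it is a square-zero thickening by a quasi-coherent module of the étale equivalence relation $V \times_Z V \rightrightarrows V$ presenting $Z$.
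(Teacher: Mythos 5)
Your overall plan is reasonable and structurally similar to what the paper does (reduce to a scheme via an \'etale atlas, then check the thickening is a scheme locally), but you have misidentified where the real work lies and have glossed over the one step that actually uses the hypothesis on $J$. The crux of the lemma is the claim that a square-zero thickening of an \emph{affine scheme} by a \emph{quasi-coherent} module is again an affine scheme, and your justification for it is wrong: you write that the thickening is ``\'etale-locally of the form $\Spec(A\oplus M)$'' because ``one takes the trivial (split) square-zero extension \'etale-locally and glues.'' Square-zero extensions $0\to M\to B\to A\to 0$ of $\C$-algebras need not split, even Zariski- or \'etale-locally: the obstruction lives in $\Ext^1_A(L_{A/\C},M)$, which vanishes when $A$ is smooth but not in general, and the lemma is stated for arbitrary algebraic spaces $Z$. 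Your fallback appeal to ``topological invariance of the \'etale site'' is also not sufficient for this step: it identifies \'etale $\O_Z$-algebras with \'etale $R$-algebras, but it does not by itself tell you that the ringed site $(\underline{V},R_V)$ is locally isomorphic to $\Spec$ of a ring. That identification is precisely what needs to be established.

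The paper supplies exactly this missing ingredient. After reducing to $Z$ a scheme, it observes that $\Spec\Gamma(U,R)$ and $U$ have the same underlying space (the kernel of $\Gamma(U,R)\to\Gamma(U,\O_Z)$ is nilpotent), and then verifies directly that for an affine $U$ and $f\in R(U)$, the natural map $\Gamma(U,R)_f\to\Gamma(U_f,R)$ is an isomorphism, so that $R$ restricted to $U$ genuinely is the structure sheaf of $\Spec\Gamma(U,R)$. Both the injectivity and surjectivity halves of this verification use quasi-coherence of $J$ (a section of $J$ over $U$ killed on $U_f$ is killed by a power of $f$; a section of $J$ over $U_f$ extends to $U$ after multiplying by a power of $f$; and $H^1(U,J)=0$ so the sequence of global sections is exact). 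Your proposal never makes this verification and never points to where the quasi-coherence of $J$ is used, even though that hypothesis is precisely what the lemma turns on. The atlas-and-equivalence-relation scaffolding you set up is fine (and spells out what the paper compresses into ``the statement is \'etale local''), but it rests on the unproven affine-local assertion, so the proposal as written has a genuine gap.
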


\begin{proof}See for example \cite[\href{https://stacks.math.columbia.edu/tag/05ZT}{Tag 05ZT}]{stacks-project}. 
\end{proof}

We thus have an algebraic thickening $Z_0=(Z^{\mathrm{\acute{e}t}},R)$ and a diagram
 \[\xymatrix{
 W\ar[d]\ar[r]&W'\ar[d]\\
 Z\ar[r]&Z_0
 }\] 
where $W'\to Z_0$ is dominant, since $f$ is dominant.  By Proposition \ref{prop mapdescent} we also have a diagram
 \[\xymatrix{
 W^\df\ar[dd]\ar[r]&W'^\df\ar[dd]\ar[dr]&\\
 &&\mathcal{Z'}\\
 Z^\df\ar[r]&Z_0^\df\ar[ur]
 }\] 

Note that $Z_0^{\df}\to\mathcal{Z'}$ may well not be immersive. We claim that the image is algebraic.  The definable complex analytic space structure on the image is defined by the image $\mathcal{T}$ of the map $\O_{\mathcal{Z'}}\to R^\df$, and we have a diagram with exact rows
\[\xymatrix{
0\ar[r]&J^\df\ar[r]&R^\df\ar[r]&\O_{Z^\df}\ar[r]&0\\
0\ar[r]&\mathcal {K}\ar[u]\ar[r]&\mathcal{T}\ar[u]\ar[r]&\O_{Z^\df}\ar@{=}[u]\ar[r]&0.
}\]  

Now $\mathcal{K}$ is a coherent $\O_{Z^\df}$-submodule of $J^\df$ and therefore the definabilization of an algebraic $K\subset J$ by Theorem \ref{gaga}.  Letting $R'=R/K$, we have another algebraic space 
$(Z^{\mathrm{\acute{e}t}},R')$ by Lemma \ref{schemethick}. Note that $\mathcal{T}/K^\df\cong \O_{Z^\df}$ is a section of $R'^\df\to\O_{Z^\df}$ which is algebraic by Theorem \ref{gaga} as it gives an isomorphism $R'^\df\cong \O_{Z^\df}\oplus (J/K)^\df$. It is easy to check that the sheaf of rings $T=R\oplus_{R'}\O_Z$ definabilizes to $\mathcal{T}$, 
and by Lemma \ref{schemethick}, $Z''=(Z^{\mathrm{\acute{e}t}},T)$ is algebraic.

Since $(Z^{\df}, \mathcal{T})$ is the image of $\phi'$ by construction, this concludes the proof of Proposition \ref{sqzerolift}.\qed

\subsection{Proof of Proposition \ref{defhilbert}} First observe the following:
\begin{lemma}\label{lem hilb morph}
For a smooth proper morphism of smooth irreducible complex analytic spaces $g:\mathcal{Y}\to \mathcal{Z}$ with $g_{*}\O_{\mathcal{Y}}=\O_\mathcal{Z}$, the irreducible component $\mathcal{D}$ of $\mathcal{D}(\mathcal{Y})$ containing the fibers of $g$ is canonically identified with $\mathcal{Z}$ together with $g$ as the universal family.
\end{lemma}
\begin{proof}
We will show the induced holomorphic map $\mathcal{Z}\to\mathcal{D}$ is an isomorphism.

For any $z \in \mathcal{Z}$, and recalling that $\mathcal{Z}$, $\mathcal{Y}$ and $\mathcal{Y}_z$ are smooth, we have on the one hand that the normal bundle $\mathrm{N}_{\mathcal{Y}_z / \mathcal{Y}}$ of $\mathcal{Y}_z$ in $\mathcal{Y}$ is canonically identified with $T_{\mathcal{Z}, z} \otimes \O_{\mathcal{Y}_z}$, whereas on the other hand the tangent space of $\mathcal{D}(\mathcal{Y})$ at the point corresponding to $\mathcal{Y}_s$ is canonically identified with $\mathrm{H}^0(\mathcal{Y}_s, \mathrm{N}_{\mathcal{Y}_z / \mathcal{Y}})$. Since $g_{*}\O_{\mathcal{Y}}=\O_\mathcal{Z}$, it follows that the holomorphic map $\mathcal{Z}\to\mathcal{D}$ is an isomorphism on tangent spaces.  As $\mathcal{Z}$ is smooth, it follows that $\mathcal{Z}\to\mathcal{D}$ is an isomorphism on completed local rings.

It remains to show that $\mathcal{Z}\to\mathcal{D}$ is bijective on points.  The injectivity is clear since there is only one reduced compact analytic space of maximal dimension contained in a given (necessarily irreducible) fibre of $g$.

It follows that $\mathcal{Z}\to\mathcal{D}$ is an open immersion with dense image. But the compact complex analytic subspaces of $\mathcal{Y}$ that are mapped to a point by $g$ form a closed analytic subset of $\mathcal{D}(\mathcal{Y})$. Therefore, the compact complex analytic subspaces of $\mathcal{Y}$ that correspond to points of $\mathcal{D}$ are mapped to a point by $g$. Letting $\mathcal{V}\to\mathcal{D}$ be the universal family, it follows that the composition $\mathcal{V}\to \mathcal{Y}\to \mathcal{Z}$ of the evaluation map $\mathcal{V}\to\mathcal{Y}$ with $g$ factors through the Stein factorization of the projection $\mathcal{V}\to\mathcal{D}'\to\mathcal{D}$ by the universal property of Stein factorization (see \cite[p.214]{Grauert}). Since the resulting composition $\mathcal{D}'\to \mathcal{Z}\to\mathcal{D}$ is $\mathcal{D}'\to \mathcal{D}$, the map $\mathcal{Z}\to\mathcal{D}$ is surjective on points. \end{proof}

It then follows from the lemma that for $d$ such maps $g_j:\mathcal{Y}_j\to\mathcal{Z}$ with ${g_{j}}_{*}\O_{\mathcal{Y}_j}=\O_\mathcal{Z}$, letting $g:\mathcal{Y}:=\sqcup_j\mathcal{Y}_j\to \mathcal{Z}$, the irreducible component of $\mathcal{D}(\mathcal{Y})$ containing the fibres of $g$ is canonically identified with $\mathcal{Z}^d$ with universal family given by
\[\bigsqcup_{j=1}^d \mathcal{Z}\times\cdots\times \mathcal{Y}_j\times\cdots\times \mathcal{Z}\to\mathcal{Z}^d\]
where $\mathcal{Y}_j$ is inserted in the $j$th slot in the $j$th factor.

Returning to the setup of Proposition \ref{defhilbert}, let $X^\an\rightarrow\mathscr{U}'\rightarrow\mathcal{U}^\an$ be the Stein factorization of $\phi^\an$.  Since $\mathscr{U}'\to\mathcal{U}^\an$ is finite \'etale, by Lemma \ref{riemann exist} and Proposition \ref{propercover} this diagram is the analytification of a diagram $X^\df\xrightarrow{\phi'}\mathcal{U}'\xrightarrow{\nu}\mathcal{U}^\df$, as $\phi'$ is obtained by taking connected components of $\phi$ on a definable cover of $\mathcal{U}$.

Let $H$ be the component of $\Hilb(X)$ which contains the general fiber of $X^\df\to \mathcal{U}$, and $V\subset X\times H$ the universal subscheme with projections $p_1:V\to X$ and $p_2:V\to H$.  We therefore obtain a cartesian diagram in the analytic category

\begin{equation}
\label{pullback sq}
\begin{tikzcd}
  X^\an\arrow{d}[swap]{\phi^\an}\arrow{r}&V^\an\arrow{d}{p_2^\an}\\
  \mathcal{U}^\an\arrow{r}{\beta}&H^\an.
\end{tikzcd}
\end{equation}

By definable Chow (Theorem \ref{defchow}), the proposition will follow if this diagram is the analytification of a diagram of definable complex analytic spaces and moreover if $\beta$ is a closed immersion.  It will be sufficient to verify that this is the case on a definable open cover of $H^\df$.

Let $\mathcal{U}_i$ be a definable simply-connected cover of $\mathcal{U}$ as in Remark \ref{simply}, and let $\mathcal{U}'_{i,1},\ldots,\mathcal{U}'_{i,d}$ be the $d=\deg(\mathcal{U}'/\mathcal{U})$ components of $\nu^{-1}(\mathcal{U}_i)$. For a fixed $i$, consider in $ H^\an$ the subset of compact analytic subspaces of $X^{\an}$ which meet each component $\phi'^{-1}(\mathcal{U}'_{i,j})$.
This is naturally a definable open subset $\mathcal{H}_i$ of $H^\df$, since in the diagram
\begin{equation}\begin{tikzcd}
&V^\df\ar{ld}[swap]{p_1}\ar{rd}{p_2}&\\
X^\df\ar{rd}[swap]{\phi'}&&H^\df\\
&\mathcal{U}'&
\end{tikzcd}\label{factorhilb}\end{equation} 
it is the intersection of $p_2 \left( (\phi'\circ p_1)^{-1}(\mathcal{U}'_{i,j}) \right)$ for $j=1,\ldots,d$, and $p_2$ is flat hence open. Moreover, $\beta(\mathcal{U}^\an)$ is contained in the union of the $(\mathcal{H}_i)^\an$.

Let $\mathcal{X}_{i,j}:=\phi'^{-1}(\mathcal{U}'_{i,j})$ and consider 
\[\mathscr{X}_i:=\bigsqcup_{j=1}^d \mathcal{U}'_{i,1}\times\cdots\times \mathcal{X}_{i,j}\times\cdots\times \mathcal{U}'_{i,d}\to\mathscr{U}'_i:=\prod_{j=1}^d\mathcal{U}'_{i,j}.\]
This is the analytic family of subschemes of $X$ obtained by taking a union of fibers of $\phi':X^\df\to\mathcal{U}'$ over each of the open sets $\mathcal{U}_{i,j}'\subset\mathcal{U}'$ for $j=1,\ldots,d$.  In particular, it is a flat analytic family of subvarieties parametrized by $\mathcal{H}_i$, and by Lemma \ref{lem hilb morph} and the ensuing discussion, the horizontal morphisms in the resulting cartesian diagram

\begin{equation}
\label{pullback sq2}
\begin{tikzcd}
  \mathscr{X}_i^\an\arrow{d}\arrow{r}&(V^\an)_{\mathcal{H}_i}\arrow{d}{p_2^\an}\\
  (\mathscr{U}'_i)^\an\arrow{r}&\mathcal{H}_i^\an
\end{tikzcd}
\end{equation}are isomorphisms.  The inverse of the top map is obtained by taking a union of base-changes of connected components of the map $p_1^\df:(V^\df)_{\mathcal{H}_i}\to X_{\mathcal{U}_i}$ and is definable.  Thus, by Proposition \ref{prop mapdescent}, \eqref{pullback sq2} is the analytification of the right part of the diagram

\begin{equation}
\label{pullback sq3}\notag
\begin{tikzcd}
  X_{\mathcal{U}_i}\ar{r}\ar{d}[swap]{\phi_{\mathcal{U}_i}}&\mathscr{X}_i\arrow{d}\arrow{r}{\cong}&(V^\df)_{\mathcal{H}_i}\arrow{d}{p_2^\df}\\
  \mathcal{U}_i\ar{r}&\mathscr{U}'_i\arrow{r}{\cong}&\mathcal{H}_i.
\end{tikzcd}
\end{equation}
The left part of the diagram consists of the natural maps and is obviously definable, and $\mathcal{U}_i\to\mathscr{U}_i'$ is a clearly closed immersion.  The outer square analytifies to \eqref{pullback sq} restricted to $\mathcal{H}_i$, thus proving the claim.\qed

\subsection{Algebraizing analytic maps from algebraic varieties}
We conclude this section with a brief discussion of some of the subtleties involved in algebraizing a proper analytic morphism $\phi:X^\an\to\mathcal{S}$ from an algebraic space $X$ without tameness hypotheses.  For simplicity assume $\mathcal{S}$ and $X$ are irreducible.

The Hilbert space part of the proof in section \ref{sectrestrict} that Propositions \ref{sqzerolift}and \ref{defhilbert} imply Proposition \ref{defpropmap} shows that if $X^\an\to \mathcal{S}$ is flat with reduced irreducible generic fiber, then $\phi$ identifies $\mathcal{S}$ with a component of the Hilbert space of $X$ and is therefore algebraic.  This is the same argument used by Sommese to prove \cite[Proposition III and Remark III-C]{Som2} (see Theorem \ref{sommese thm}).

On the other hand, it is not hard to produce examples of nonalgebraizable finite flat maps $\phi:X^\an\to\mathcal{S}$.  The following example shows this is possible even assuming the ``equivalence relation" $R_\phi :=X^\an\times_\mathcal{S}X^\an\subset X^\an\times X^\an$ of $\phi$ is the analytification of a (possibly non-reduced) algebraic subspace, albeit for $\mathcal{S}$ non-normal.

\begin{eg}\label{algbraizing:counterexample}
Let $X=\mathbb{A}^2$ with coordinates $(x,y)$.  Let $\mathcal{S}\subset \C^4$ with coordinates $x,A,B,C$ be cut out by $A^5=B^3,AC=B^2\sin(x),BC=A^4\sin(x)$.  Let $X^\an\to \mathcal{S}$ be the map $(x,y)\mapsto (x,y^3,y^5,y^7\sin(x))$.  First, observe that 
\[R_\phi=V(x_1-x_2, y_1^3-y_2^3,y_1^5-y_2^5)\subset X^\an\times X^\an\]
where $(x_1,y_1)$ and $(x_2,y_2)$ are the coordinates on the two factors.  Indeed, we can think of $X^\an$ as defined in $\mathcal{S}\times\C$ (with $y$ being the last coordinate) by the ideal $(A-y^3,B-y^5,C-y^7\sin(x))$, in which case $R_\phi$ is cut out in $X^\an\times \C$ (with $y_2$ being the last coordinate) by the ideal $(y_1^3-y_2^3,y_1^5-y_2^5,(y_1^7-y_2^7)\sin(x))$, but
\[y_1^7-y_2^7=-y_1^2y_2^2(y_1^3-y_2^3)+(y_1^2-y_2^2)(y_1^5-y_2^5).\]
On the other hand, the rational function $y$ on $X$ descends to a meromorphic function on $\mathcal{S}$ but cannot be rational on $\mathcal{S}$ with respect to any algebraic structure, since it is nonregular along infinitely many divisors.
\end{eg}

It seems likely to the authors that for $X^\an,\mathcal{S}$ normal analytic varieties (in particular irreducible and reduced), a proper analytic morphism $X^\an\to\mathcal{S}$ with connected fibers and an algebraic equivalence relation could still be non-algebraizable, but we have not been able to construct such an example.

\section{A quasi-projectivity criterion}\label{sectionqpcrit}
Recall that by convention all the algebraic spaces that we consider are of finite type over $\C$.

Given a line bundle $L$ on an algebraic space $X$, we prove in this section two criteria ensuring that $X$ is a scheme and that $L$ is ample.

\begin{prop}\label{embedding criterion}
Let $X$ be an algebraic space and $L$ a line bundle on $X$. Let $S \subset \Gamma_\ast(X, L) := \bigoplus_{d \geq 0} \Gamma(X, L^d)$ be an integrally closed graded subalgebra that separates points of $X$. Then there exist an integer $d \geq 1$ and a finite-dimensional subspace $V \subset S_d$ such that the corresponding morphism $X \rightarrow \mathbb{P}(V^\vee)$ is defined everywhere and is an immersion. In particular, $X$ is a scheme and $L$ is ample.
\end{prop}

In the statement, the condition that $S$ separates points means that for any two distinct points $P$ and $Q$ in $X$ there exists a section in some $S_d$ that vanishes on $P$ but not on $Q$.

\begin{proof}
If $V$ is a finite-dimensional subspace of $S_d$ for some positive integer $d$, and $B_V$ is the reduced support of the cokernel of the canonical morphism of coherent $\O_X$-modules $V \otimes_{\C} \O_X \rightarrow L^d  $, then we have a canonical map $\phi_V : X - B_V \rightarrow \mathbb{P}(V^\vee)$ such that $\phi_V^\ast \O(1) \simeq (L^d)_{|X-B_V} $.

We denote by $B_d$ the intersection of the $B_V$'s over all finite-dimensional $V \subset S_d$. Observe that $B_{d \cdot d^\prime} \subset B_d \cap B_{d'}$ for every integers $d, d^\prime \geq 1$ since $S$ is a graded algebra. Since by assumption the intersection of the $B_d, d \geq 1$, is empty, the noetherianity of $X$ implies that there exists $d \geq 1$ and a finite-dimensional $V \subset S_d$ such that $B_V$ is empty.

Given a finite-dimensional $V \subset S_d$ for some $d \geq 1$ such that $B_V$ is empty, let $R_V \subset X \times X$ denote the reduced equivalence relation induced by $\phi_V$. Since $S$ separates points, the intersection of the $R_V$'s over all such $V$'s is equal to the reduced diagonal, therefore by noetherianity of $X$ there exists $V$ such that $\phi_V$ is defined everywhere and injective on points.

In particular, $X$ is in fact a scheme since it admits a quasi-finite map to a scheme \cite[\href{https://stacks.math.columbia.edu/tag/0417}{Tag 0417}]{stacks-project}.
Moreover, by Zariski's main theorem \cite[\href{https://stacks.math.columbia.edu/tag/082K}{Tag 082K}]{stacks-project}, the map $\phi_V : X \rightarrow \mathbb{P}(V^\vee)$ factors as $g\circ i$ for $g:X' \rightarrow \mathbb{P}(V^\vee)$ finite and $i:X\to X'$ an open immersion. Since the pull-back of an ample line bundle by a finite map or an immersion is still ample, we get that $L$ is ample.

Finally, $i$ and $g$ induce morphisms of graded algebras:
\[  \bigoplus_{d \geq 0} \Sym^d V \to \bigoplus_{d \geq 0} \Gamma(X', g^\ast \O_{\mathbb{P}(V^\vee)}(d)) \to \bigoplus_{d \geq 0} \Gamma(X, L^d). \]
Note that we can assume without loss of generality that $i(X)$ is dense in $X'$, so that the morphism on the right is injective. Since by construction the composition of the two morphisms is also injective, the morphism on the left is injective too. The morphism $g$ being finite, it follows that the extension 
\[  \bigoplus_{d \geq 0} \Sym^d V \to \bigoplus_{d \geq 0} \Gamma(X', g^\ast \O_{\mathbb{P}(V^\vee)}(d))  \]
is finite. But $S$ is integrally closed in $\bigoplus_{d \geq 0} \Gamma(X, L^d)$ by assumption, hence we get that $\bigoplus_{d \geq 0} \Gamma(X', g^\ast \O_{\mathbb{P}(V^\vee)}(d))$ is contained in $S$.   
\end{proof}

In what follows, given a reduced algebraic space $Y$, we say that a projective log smooth pair $(\bar{X},D)$ is a log-resolution of $Y$ if, setting $X := \bar{X} - D$, one is given a proper morphism $X \rightarrow Y$ which is birational in restriction to any irreducible component of $Y$. Existence of log-resolutions can be proved as follows. Thanks to Chow's lemma \cite[\href{https://stacks.math.columbia.edu/tag/088U}{Tag 088U}]{stacks-project}, there exists a complex projective scheme $\bar W$, a dense open $W \subset \bar W$ and a proper morphism $W \rightarrow Y$ which is an isomorphism in restriction to a dense open of $Y$. A log-resolution is then obtained by first replacing $\bar W$ with its normalization and then applying Hironaka desingularization Theorem to its irreducible components.

\begin{setting}\label{setup}
Let $L$ be a line bundle on an algebraic space $Y$ with the following property.  For every reduced closed subspace $Z \hookrightarrow Y$ and any log-resolution $(\bar{X},D)$ of $Z$, the pull-back of the restriction $L_{Z}$ extends as a nef and big line bundle $L_{\bar X}$ on $\bar X$, and this extension is functorial with respect to morphisms of log-resolutions of $Z$.
\end{setting}

\begin{defn}\label{vanishingsect}
Assume Setting \ref{setup}.  Given a closed subscheme $Z \hookrightarrow Y$, we say a section $s$ of $L^m_Z$ \emph{vanishes at the boundary} if for some log-resolution $(\bar X,D)$ of $Z$ the section $s$ pulls backs and extends to a section of $L_{\bar X}^m(-D)$.  We let $\Gamma_{van}(Z,L^m_Z)\subset \Gamma(Z,L^m_Z)$ denote the linear subspace of sections vanishing at the boundary, which is finite-dimensional as $\Gamma_{van}(Z,L^m_Z)$ injects into $\Gamma(\bar X,L_{\bar X}^m)$.
\end{defn}
 
Note that if the condition on $s$ holds for one log-resolution then it holds for any log-resolution, since any morphism of log-pairs $(\bar X',D') \rightarrow (\bar X,D)$ which is birational in restriction to any irreducible component of $\bar X$ induces an isomorphism of $\C$-vector spaces $\Gamma(\bar X,L_{\bar X}^m(-D) \rightarrow \Gamma(\bar X',L_{\bar X'}^m(-D')$, and any two log-resolutions are dominated by a third-one. Moreover, $s$ vanishes at the boundary if and only if $s^\reduced$ does.  
Note finally that the ring $\bigoplus_n \Gamma_{van}(Y,L_Y^n) $ is integrally closed in
$\bigoplus_n \Gamma(Y,L_Y^n) $, since a meromorphic section $s$ which satisfies a monic polynomial relation with coefficients that vanish at the boundary must also vanish at the boundary.

\begin{thm}\label{qproj gen}
Assume Setting \ref{setup}.  Then $Y$ is a scheme and $L$ is an ample line bundle. Moreover, for every $n \gg 1$, the natural morphism $Y \rightarrow \mathbb{P}(\Gamma_{van}(Y,L^n)^\vee)$ is defined everywhere and is an immersion.
\end{thm}

\begin{proof}
The theorem is a consequence of the following more precise result, thanks to Proposition \ref{embedding criterion}:

\begin{claim} For any closed, reduced zero-dimensional subscheme $P\subset Y$, the restriction $\Gamma_{van}(Y,L_Y^n)\to \Gamma(P,L_P^n)$ is surjective for some positive integer $n$.
\end{claim}

Observe that all the closed subschemes of $Y$ satisfy the assumptions of Theorem \ref{qproj gen}, therefore by Noetherian induction we can assume that the claim is satisfied by any closed subscheme distinct from $Y$. The case where $Y$ has dimension zero being trivial, we assume from now on that $d=\dim Y \geq 1$.

\item\vskip1em
\emph{Step 1.}  We first show we may assume $Y$ is reduced.  If $Y$ is non-reduced, then we can write $Y$ as a thickening of a subspace $Y_0$ by a square-zero sheaf of ideals $I$. By the induction statement and Proposition \ref{embedding criterion} applied to $Y_0$, we can pick an embedding $Y_0\ra \PP^m$  corresponding only to sections that vanish at the boundary. Thus, we can find a section $g\in \Gamma_{van}(Y_0,L_{Y_0}^n)$ such that $(Y_0)_g$ is
affine and contains $P$. Also, we may pick vanishing sections $s_1,\dots, s_k$ of $L_{Y_0}^m$ whose image span $\Gamma(P,L_{P}^m)$. It follows that the images of the vanishing sections $g \cdot s_1,\dots, g \cdot s_k$ of $L_{Y_0}^{n +m}$ span $\Gamma(P,L_{P}^{n+m})$. Finally, since the open subschemes $(Y_0)_{g \cdot s_i}$ are affine, the vanishing sections $(g \cdot s_i)^r$ lift to vanishing sections of $L^{r (n+m)}$ for some $r \geq 1$ thanks to \cite[Lemme 4.5.13.1]{EGA2}.

\item\vskip1em\noindent\emph{Step 2.}  By Step 1, we assume $Y$ is reduced. Take a log-resolution $(\bar X,D)$ of $Y$. Letting $X := \bar{X} - D$, the corresponding morphism $f:X\ra Y$ is an isomorphism outside of a dimension $d-1$ subset.

\begin{lemma}\label{descent}
Let $X,Y$ be algebraic spaces and $f:X\to Y$ a proper dominant morphism. Then there is an algebraic subspace $S\subset Y$ supported on the locus where $f$ is not an isomorphism,  such that for any line bundle $L$ on $Y$, a section $s\in \Gamma(X,f^*L)$
is in the image of $\Gamma(Y,L)$ if and only if its restriction $s|_T\in \Gamma(T,f^*L|_T)$ is in the image of $\Gamma(S,L|_S)$, where $T=S\times_Y X$. 
\end{lemma}
  \begin{proof}
  Let $Q$ be the cokernel of the map $\O_Y\to f_*\O_X$ and $S$ its scheme-theoretic support.  Then we have a diagram
  \[\xymatrix{
  0\ar[r]&\O_Y\ar[d]\ar[r]&f_*\O_X\ar[d]\ar[r]&Q\ar[r]\ar@{=}[d]&0\\
  &\O_S\ar[r]&f_*\O_T\ar[r]&Q\ar[r]&0.
  }  \]
  Tensoring by $L$ and taking cohomology, the result follows.
  \end{proof}

In the present context, let $S\subset Y$ and $T\subset X$ be the closed subspaces guaranteed by the lemma, and let $Z$ be the scheme theoretic union of $S$ and $P$, that is the closed subscheme of $X$ defined by the intersection of the two ideal sheaves defining $S$ and $P$.  Likewise, let $W$ be the scheme theoretic union of $T$ and $f^{-1}(P)$. 

\item\vskip1em\noindent\emph{Step 3.}
\begin{lemma}
There is a (nonzero) effective divisor $\bar E$ in $\bar X$ containing $W$ such that for $m\gg 1 $ and for every section $s\in \Gamma(\bar E,L_{\bar X}^m(-D)|_{\bar E})$ whose restriction $s|_W\in\Gamma(W,L_W^m)$ is in the image of $\Gamma(Z,L_Z^m)$, there is a section $t\in\Gamma_{van}(Y,L_Y^m)$ with $s|_{\bar E\cap X}=(f^*t)|_{\bar E\cap X}$.	
\end{lemma}
\begin{proof}
Let $A$ be an ample divisor on $\bar{X}$. The line bundle $L_{\bar{X}}$ is big on every component by the assumptions, so for some $n$ there is a section $\alpha$ of $L_{\bar X}^n(-A)$ whose zero locus $\bar E_0$ contains $W$.  For any $r>0$, setting $\bar E=r\bar E_0$ we thus have an exact sequence
\[ H^0(\bar X,L_{\bar X}^m(-D))\to H^0(\bar E,L_{\bar X}^m(-D)|_{\bar E})\to H^1(\bar X, L_{\bar X}^{m-nr}(-D+rA )).\] 
The line bundle $L_{\bar X}$ is nef, so by Fujita vanishing (\cite[Theorem 1]{Fujita}, see also \cite[Theorem 1.4.35]{LazarsfeldI}) the rightmost group is zero---and thus the first map is surjective---for some $r$ and any $m\geq nr$.  Now apply the previous step.
\end{proof}
\item\vskip1em\noindent\emph{Step 4.}
\begin{lemma}
There is a (nonzero) effective divisor $E'$ of $X$ containing $W$ such that for some integer $k$ and all $m\gg 1$, denoting $\bar E'$ the closure of $E'$ in $\bar X$, we have that for every section $s\in\Gamma(\bar E',L_{\bar X}^m(-kD)|_{\bar E'})$ whose restriction $s|_W\in \Gamma(W,L_W^m)$ is in the image of $\Gamma(Z,L^m_Z)$, there is a section $t\in\Gamma_{van}(Y,L_Y^m)$ with $s|_{E'}=(f^*t)|_{E'}$.
\end{lemma}
\begin{proof}
Write $\bar E=\bar{E}'+D'$ where $D'$ is supported on the boundary and every component of $\bar{E}'$ meets $X$.  Set $E'=\bar{E}'\cap X$.  Note that we have an exact sequence
\[0\to\O_{\bar{E}'}(-D')\to\O_{\bar E}\to\O_{D'}\to0\]
and so $\Gamma(\bar E',L_{\bar X}^m(-kD)|_{\bar E'})$ injects into $\Gamma(\bar E,L_{\bar X}^m(-D)|_{\bar E})$ for some fixed $k$ and all $m\geq 0$.  Now apply the previous step.
\end{proof}

\item\vskip1em\noindent\emph{Step 5.}  Let $F\subset Y$ be the image of $E'$. Applying the induction step to $F$, it follows that for some positive integer $n$ the map
$\Gamma_{van}(F,L_{F}^n)\ra \Gamma(P,L_{P}^n)$ is surjective. Pulling an appropriate symmetric power of these sections back to $\bar{E}'$ and applying Step 4, we see that these sections extend to vanishing sections of $Y$, as desired.
\end{proof}

\section{Algebraicity and quasi-projectivity of period maps}\label{sectionperiodmap}
In this section we prove the Theorem \ref{maingriffiths}.  \textbf{For this section we work over the o-minimal structure $\R_{\an,\exp}$.}

\subsection{Period images}
For background on period domains see for example \cite{CMSP}.  Let $\Omega$ be a pure polarized period domain with generic Mumford--Tate group $\G$ and $\Gamma\subset \G(\Q)$ an arithmetic lattice.  By \cite[Theorem 1.1]{BKT}, if $\Gamma$ is neat then $\Gamma\backslash \Omega$ has a canonical structure of a definable complex analytic variety (in fact, even over $\R_{\mathrm{alg}}$). Since every arithmetic lattice has a normal neat subgroup $\Gamma'$, using Proposition \ref{finitequotient}  we can equip $\Gamma\backslash\Omega$ with a definable complex analytic space structure as the categorical quotient of $\Gamma'\backslash\Omega$ by $G=\Gamma/\Gamma'$.

\begin{cor}\label{hodge}
Let $X$ be a reduced algebraic space, and $\phi: X^{\an}\ra (\Gamma\backslash \Omega)^\an$ a period map as in the introduction\footnote{That is, a locally liftable map satisfying Griffiths transversality on the regular locus.}. Then $\phi$ (uniquely) factors as $\phi =\iota^\an\circ f^\an$ for a dominant map $f:X\to Y$ of algebraic spaces and a closed immersion $\iota:Y^\df\to \Gamma\backslash\Omega$ of definable complex analytic varieties.
\end{cor}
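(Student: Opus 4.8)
The plan is to transport the whole statement into the definable analytic category, arrange the period map into a form covered by Theorem \ref{defpropmap}, and then apply that theorem.

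\emph{Translating to the definable category.} By \cite[Theorem 1.1]{BKT} the quotient $\Gamma\backslash\Omega$ carries a canonical structure of definable analytic space when $\Gamma$ is neat, and Proposition \ref{finitequotient} extends this to general $\Gamma$ by taking the categorical quotient by $\Gamma/\Gamma'$ for a neat normal $\Gamma'$. With respect to this structure every locally liftable, Griffiths-transverse holomorphic map is definable (again \cite{BKT}, and this is precisely why the section is run over $\R_{\an,\exp}$), so $\phi$ is the analytification of a definable analytic map $\phi\colon X^\df\to\Gamma\backslash\Omega$. It therefore suffices to produce a dominant map of algebraic spaces $f\colon X\to Y$ and a closed immersion of \emph{definable} analytic spaces $\iota\colon Y^\df\to\Gamma\backslash\Omega$ with $\phi=\iota\circ f^\df$, and in fact by Corollary \ref{mapdescent} and Corollary \ref{descoh} it is even enough to produce this factorization with $\iota$ only an analytic closed immersion — definability of $\iota$ (and of the resulting $Y^\an\to\Gamma\backslash\Omega$) then comes for free.

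\emph{Reduction to Theorem \ref{defpropmap}.} First, resolving singularities we may assume $X$ is smooth: if $\pi\colon\widetilde X\to X$ is a resolution and $\phi\circ\pi^\an=\iota\circ g^\df$ is a factorization for $\widetilde X$, then since the fibres of $\pi$ are proper, connected, and contracted by $\phi$, the map $g$ descends through $\pi$ by the rigidity lemma, producing a still-dominant $f\colon X\to Y$. The real issue is that $\phi$ is in general \emph{neither proper nor algebraic}, so Theorem \ref{defpropmap} cannot be applied to $\phi$ directly. To get around this one exploits the asymptotics of period maps: choosing a smooth compactification $\overline X\supseteq X$ with simple normal crossings boundary, passing to a finite base change so that the local monodromies become unipotent, and enlarging $X$ across those boundary points where the local monodromy is trivial (where $\phi$ extends, by Schmid's removable singularity theorem), one uses the nilpotent orbit theorem — in its definable form valid over $\R_{\an,\exp}$ \cite{BKT} — to upgrade $\phi$ to a \emph{proper} definable analytic map $\overline\phi$ out of an algebraic space whose image in $\Gamma\backslash\Omega$ agrees with that of $\phi$. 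Theorem \ref{defpropmap} applied to $\overline\phi$ then yields a proper dominant map $\overline X\to\overline Y$ of algebraic spaces and a closed immersion $\overline Y^\df\hookrightarrow\Gamma\backslash\Omega$; restricting to the open $X\subseteq\overline X$ and letting $Y\subseteq\overline Y$ be the (dominant) image of $X$ gives the desired $f$ and $\iota$, with $\iota$ landing in $\Gamma\backslash\Omega$ because $\phi$ does.

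\emph{Uniqueness and the main obstacle.} Uniqueness is formal: two factorizations $\phi=\iota_j\circ f_j^\df$ must agree over the dense open subset of $X$ where $\phi$ restricts to an isomorphism onto its image, so the induced birational map $Y_1\dashrightarrow Y_2$ is an isomorphism, also using the uniqueness clause of Theorem \ref{defpropmap}. The hard part is the middle step — producing a proper definable model of $\phi$ over an algebraic space. This is exactly where asymptotic Hodge theory is essential, and where one must carefully reconcile the boundary behaviour of the period map along the various strata of $\overline X\smallsetminus X$; the point of working in the definable analytic category (and of the GAGA-type input behind Theorem \ref{defpropmap}) is that this reconciliation, which is genuinely delicate in the purely analytic world, becomes tractable.
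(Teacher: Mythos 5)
Your proposal follows the same overall structure as the paper's proof: resolve singularities to make $X$ smooth, extend to make $\phi$ proper, invoke definability from \cite{BKT}, and apply Theorem~\ref{defpropmap}. The principal difference is the middle step. The paper simply cites Griffiths' properness theorem \cite[Theorem~9.5]{G2} --- after adding to $X$ the boundary components with finite local monodromy, the period map becomes proper --- whereas you sketch a re-derivation of that result. Two cautions on your sketch. First, the properness is not really supplied by the nilpotent orbit theorem; it comes from the Borel extension theorem (continuous extension over finite-monodromy boundary points) together with the fact that along boundary components with infinite monodromy the image escapes every compact. Attributing the properness to the nilpotent orbit theorem muddies what is a clean citation in the paper. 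Second, your descent argument through the resolution $\pi\colon\widetilde X\to X$ should not invoke ``the rigidity lemma,'' which is the wrong tool (it concerns products $X\times B$, not resolutions). What one actually needs is that the factorization $g\colon\widetilde X\to Y$ is constant on the fibres of $\pi$, hence descends: the fibres are contracted by $\phi$, and since $\pi$ is proper birational with $\O_X\hookrightarrow\pi_*\O_{\widetilde X}$ and $Y^\an\to(\Gamma\backslash\Omega)^\an$ is already given, Corollary~\ref{mapdescent} gives exactly the descent in the definable analytic category. Finally, your last restriction step is slightly off: $Y$ should be the full image $\overline Y$ of the compactified space (which is where the closed immersion lives), with $f\colon X\to\overline Y$ only dominant, not surjective; taking $Y$ to be the set-theoretic image of the open $X$ would generally fail to give a \emph{closed} immersion into $\Gamma\backslash\Omega$. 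With those repairs the argument is sound and coincides with the paper's.
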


\begin{proof}
Taking a resolution, it is enough to assume $X$ is smooth, and by a theorem of Griffiths \cite[Theorem 9.5]{G2} we may then assume that $\phi$ is proper.  By \cite[Theorem 1.3]{BKT}, $\phi: X^\an\to (\Gamma\backslash\Omega)^\an$ is the analytification of a map $X^\df\to\Gamma\backslash \Omega$ of definable complex analytic varieties.  Now apply Theorem \ref{defpropmap}.
\end{proof}

The uniqueness of the factorization is in the same sense as in Theorem \ref{defpropmap} (see Remark \ref{unicity}).  In fact, we obtain a version of Corollary \ref{hodge} over non-reduced bases, but as the following example illustrates we must require an admissibility condition for period maps on non-reduced bases. 
\begin{eg}\label{needdef}

Let $S^{\an}=\Gamma\backslash\Omega$
be a modular curve with level structure so that it is a smooth scheme, and let $Y=S\times_{\spec\C}\spec\C[\epsilon]/(\epsilon^2)$ be the trivial thickening of it. Given a global holomorphic derivation $D$ on $S^\an$ we can define a map $\phi:Y^{\an}\ra S^{\an}$ extending the identity map via $\phi^{\sharp}(s) = s+\epsilon Ds$. Since $S$ is affine we can pick $D$ to be non-algebraic, and then the map  $\phi$ will be
non-algebraizable.

\end{eg}
Definability provides a natural notion of admissibility for which the conclusion of Corollary \ref{hodge} holds true for non-reduced bases.  Moreover, Proposition \ref{vartomap} below shows that period maps associated to variations coming from algebraic families are automatically definable.

\begin{defn}\label{defn def period}
Let $X$ be an algebraic space (possibly non-reduced).  A definable period map of $X$ is a locally liftable map $\phi:X^\df\to \Gamma\backslash \Omega$ of definable complex analytic spaces such that for each irreducible component $Y$ of $X$ equipped with its reduced structure the associated (locally liftable definable) map $\phi_Y:Y^\df\to  \Gamma\backslash \Omega$ satisfies Griffiths transversality---that is, the (locally defined) map $T_{Y^\df}\to \phi_Y^* T_\Omega$ on the tangent sheaf $T_{Y}=(\Omega^1_{Y})^\vee$ factors through the Griffiths transverse subbundle.
\end{defn}

Note that we do not require $\phi$ to be Griffiths transverse in the nilpotent tangent directions.  Moreover, note that the definition is functorial in the sense that for any definable period map $\phi:X^\df\to\Gamma\backslash\Omega$ and any map $f:Y\to X$, we have that $f\circ\phi $ is a period map.  Finally, for $X$ integral, the Griffiths transversality condition is equivalent to the usual condition on the regular locus $X^{\mathrm{reg}}\subset X$.

The local liftability condition is equivalent to $\phi$ factoring through the stack quotient $[\Gamma\backslash \Omega]$ which is naturally a definable complex analytic Deligne--Mumford stack using the proof of Proposition \ref{finitequotient}.  There are no new subtleties in the definition of a definable complex analytic Deligne--Mumford stack, but we do not pursue these ideas here. Note that by Remark \ref{simply}, $\phi$ is definably locally liftable if and only if it is analytically locally liftable.

With these preliminaries, we now state a more general version of Corollary \ref{hodge}, to be proven in the next subsection.
\begin{thm}\label{hodgebetter}
Let $X$ be an algebraic space and $\phi:X^\df\to \Gamma\backslash\Omega$ a definable period map.  Then $\phi$ (uniquely) factors as $\phi =\iota\circ f^\df$ for a dominant map $f:X\to Y$ of algebraic spaces and a closed immersion $\iota:Y^\df\to \Gamma\backslash\Omega$ of definable complex analytic spaces.

\end{thm}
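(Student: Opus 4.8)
The strategy is to reduce Theorem \ref{hodgebetter} to Theorem \ref{defpropmap} by the same mechanism used for Corollary \ref{hodge}, with the extra care needed for non-reduced $X$. First I would establish that, after taking a resolution of the reduced space $X^\reduced$ and using Griffiths' theorem \cite[Theorem 9.5]{G2}, the period map restricted to (a suitable modification of) $X^\reduced$ becomes proper; the definability is provided by \cite[Theorem 1.3]{BKT}, which makes $\phi^\reduced:(X^\reduced)^\df\to\Gamma\backslash\Omega$ the definabilization-level object we need. Since $\Gamma\backslash\Omega$ is only a definable analytic \emph{space} (it need not itself be algebraic), we cannot apply \cite[Theorem 5.1]{PS} directly; instead the correct tool is Theorem \ref{defpropmap}, which takes a proper definable map out of (the definabilization of) an algebraic space and produces the desired algebraic factorization $X\to Y$ with $Y^\df\hookrightarrow\mathcal S$ a closed immersion. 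Applying this with $\mathcal S=\Gamma\backslash\Omega$ handles the reduced case and gives the closed immersion $\iota:Y^\df\to\Gamma\backslash\Omega$.

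For the non-reduced case the plan is to run the same thickening induction that appears in the proof that Proposition \ref{sqzerolift} implies Theorem \ref{defpropmap}. Write $X_0=X^\reduced\subset X_1\subset\cdots\subset X_n=X$ for the order filtration by the powers of the nilradical (using the definable Nullstellensatz, Corollary \ref{nullstellensatz}, to know this terminates). Having algebraized $\phi|_{X_{k-1}^\df}$ as $\iota_{k-1}\circ f_{k-1}^\df$, one obtains a commutative square with the square-zero thickening $X_{k-1}\hookrightarrow X_k$ on one side and a square-zero thickening of definable analytic spaces $\phi(X_{k-1})^\df\hookrightarrow\mathcal Z'$ (the appropriate infinitesimal neighborhood inside $\Gamma\backslash\Omega$) on the other. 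Proposition \ref{sqzerolift} then produces a dominant algebraic map $f_k:X_k\to Z''$ with $Z''^\df\to\mathcal Z'$, i.e. algebraizes $\phi|_{X_k^\df}$ onto its image. Iterating to $k=n$ gives the algebraic space $Y$ and a map $Y^\an\to(\Gamma\backslash\Omega)^\an$; that this map is itself the analytification of a definable map $Y^\df\to\Gamma\backslash\Omega$, and indeed a closed immersion, follows from Corollary \ref{mapdescent} (or Lemma \ref{descoh}), exactly as in the proof of Theorem \ref{defpropmap}, using that $X^\df\to Y^\df$ is surjective with $\O_{Y}\to f_*\O_X$ injective.

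The main obstacle is precisely the interface between the \emph{definable} data on $\Gamma\backslash\Omega$ and \emph{algebraic} data on $X$: we must know the period map is definable to begin with (for $X$ reduced this is \cite[Theorem 1.3]{BKT}; for non-reduced $X$ this is exactly the admissibility hypothesis in the definition of a definable period map, and Example \ref{needdef} shows it cannot be dropped), and at each inductive stage we must check that the thickening of the image sits inside $\Gamma\backslash\Omega$ as a genuine \emph{definable square-zero thickening} so that Proposition \ref{sqzerolift} applies. Concretely, one must verify that the $k$-th infinitesimal neighborhood $\mathcal Z_k$ of $\phi(X^\reduced)^\df$ inside $\Gamma\backslash\Omega$ is a definable analytic space and that $\phi|_{X_k^\df}$ lands in it with the correct square-zero behavior — this is where one uses that $\phi$ is Griffiths transverse and locally liftable, so that the obstruction-theoretic picture matches the algebraic one. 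Once that compatibility is in hand, uniqueness of the factorization follows formally from the uniqueness clauses in Theorem \ref{defpropmap} and Proposition \ref{sqzerolift} together with faithfulness of analytification (Theorem \ref{anfaithful}).
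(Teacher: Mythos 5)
The nilpotent-order induction you describe does not apply as stated, because Proposition \ref{sqzerolift} requires the base map $f:W\to Z$ to be \emph{proper} dominant; its proof leans on this (coherence of $R^1 f_*I$, the pushforward/descent arguments, and ultimately Artin's algebraization of formal modifications). In your induction the relevant input is $f_{k-1}:X_{k-1}\to Y_{k-1}$, which is only dominant: a definable period map $\phi$ is not assumed proper, and the resolution invoked to prove the reduced case makes the period map of the \emph{resolution} proper, not the algebraized map $X^\reduced\to Y_0$ itself. So already at $k=1$ the hypotheses of Proposition \ref{sqzerolift} fail, and the induction does not get off the ground.

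Nor can one simply resolve and compactify first and then run your nilpotent induction. Following the construction in the proof of Theorem \ref{hodgealg} (via Lemma \ref{deligne}), one obtains a proper modification $X'\to X$ carrying the nilpotent structure and a partial compactification $\bar X'$ with a \emph{proper} definable period map, to which Theorem \ref{defpropmap} applies — and Theorem \ref{defpropmap} already contains the nilpotent-order induction via Proposition \ref{sqzerolift} internally, under the properness it has available. But $X'\to X$ is scheme-theoretically dominant only over a dense open $U\subset X$; the nilpotents of $X$ supported over $X\setminus U$ are not seen by $X'$ at all. This is why the paper inducts on \emph{dimension} rather than on nilpotent order: it algebraizes the period map of the scheme-theoretic image $X''$ of $X'$ via Theorem \ref{defpropmap} applied to $\bar X'$, writes $X$ as a pushout of $X''$ and a sufficiently thick nilpotent neighborhood $W$ of $X\setminus U$, handles $W$ (whose reduction is lower-dimensional) by induction, and glues the target $Y$ as a pushout of the two period images. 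Your proposal misses both the properness obstruction to applying Proposition \ref{sqzerolift} directly and the pushout/dimension-induction device that this obstruction forces.
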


\begin{defn}  We refer to an algebraic space $Y$ with a closed immersion $\iota:Y^\df\to\Gamma\backslash\Omega$ of definable complex analytic spaces arising from the theorem as a \emph{definable period image}.
\end{defn}

Note that for a \emph{proper} definable period map $X^\df\to\Gamma\backslash\Omega$ Theorem \ref{hodgebetter} holds over an
arbitrary o-minimal structure.

\subsection{Algebraicity of the Hodge filtration}  
We make the following definition along the same lines as in the previous subsection:
\begin{defn}Let $Y$ be an algebraic space (possibly non-reduced).  A definable variation of Hodge structures on $Y$ is a triple $(V_\Z,F^\bullet, Q)$ where $V_\Z$ is a local system $V_\Z$ on $Y^\df$, $F^\bullet$ is a definable coherent locally split filtration of $V_\Z\otimes_\Z\O_{Y^\df}$ satisfying Griffiths transversality (in the same sense as Definition \ref{defn def period}), and $Q$ is a quadratic form on $V_\Z$, such that $(V_\Z,F^\bullet,Q)$ is a pure polarized integral Hodge structure fiberwise.

\end{defn}

As above, every local system on $Y^\an$ is definable by definable triangulation, see Remark \ref{simply}.  By the following lemma, if $\Gamma$ is torsion-free the triple $(V_\Z,F^\bullet,Q)$ exists universally on $\Gamma\backslash\Omega$ as a definable analytic variety, although of course it is \emph{not} in general a variation as it does not satisfy Griffiths transversality.

\begin{lemma}\label{the universal Hodge filtration is definable}Suppose $\Gamma$ is torsion-free and equip $V_\Z\otimes_\Z\O_{\Gamma\backslash\Omega}$ with its canonical definable structure.  Then the Hodge filtration $F^\bullet$ of $V_\Z\otimes_\Z\O_{\Gamma\backslash\Omega}$ is by definable coherent subsheaves.
\end{lemma}
\begin{proof}
For any definable fundamental set $\Xi\subset\Omega$, letting $\pi:\Xi\to \Gamma\backslash\Omega$ be the restriction of the quotient map, we must check that $\pi^*F^\bullet\subset V_\Z\otimes_\Z\O_{\Omega}$ is a definable coherent filtration, but this is obvious as it extends algebraically to $\check{\Omega}$.
\end{proof}

When $Y$ carries a definable variation that's clear from context, we denote by $F^\bullet_{Y^\df}$ the filtered Hodge bundle.  If $Y$ is smooth (in particular reduced) with a log smooth compactification $\bar Y$ and the variation has unipotent monodromy at infinity, we know that $F^\bullet_{Y^\an}:=(F^\bullet_{Y^\df})^\an$ has a canonical algebraic structure $F^\bullet_Y$.  In fact, the ambient flat bundle $V_\Z\otimes_\Z\O_{Y^\an}$ has a canonical extension $\bar {\mathcal{V}}$ (the Deligne canonical extension \cite{DeligneHodge}, uniquely determined by the condition that the connection have log poles with nilpotent residues\footnote{The proof in the algebraic space case is the same as that of varieties, as it relies on the existence and uniqueness of the analytic extension and ordinary GAGA.}), in which the filtration $F^\bullet_{Y^\an}$ extends as a filtration $F^\bullet_{\bar Y^\an}$ by subbundles (which we call the Schmid extension) as a consequence of the nilpotent orbit theorem \cite[Theorem 4.12]{Schmid}.  By ordinary GAGA, the vector bundle $\bar {\mathcal{V}}$ has a unique algebraic structure $\bar V$, as does the filtration.

We now show the following generalization of the first claim of the second part of Theorem \ref{maingriffiths}:
\begin{thm}\label{hodgealg}For $Y$ an algebraic space with a definable variation, $F^\bullet_{Y^\df}$ is the definabilization of a (unique) algebraic filtered bundle $F^\bullet_Y$.
\end{thm}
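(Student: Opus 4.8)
The plan is to reduce Theorem \ref{hodgealg} to the algebraicity of definable coherent sheaves already established in Theorem \ref{gaga} (definable GAGA), together with the algebraicity of the period image from Theorem \ref{hodgebetter}. Since the statement is \'etale-local on $Y$ and the uniqueness part follows from the faithfulness of $\Define$ (Lemma \ref{defineexact}), it suffices to produce \emph{some} algebraic filtered bundle whose definabilization is $F^\bullet_{Y^\df}$; descent data then glue these together. The key point is that $F^\bullet_{Y^\df}$, being a definable coherent \emph{locally split} filtration of the definable coherent sheaf $V_\Z\otimes_\Z\O_{Y^\df}$, is in particular a chain of definable coherent subsheaves of a definable coherent sheaf; so once we know the ambient sheaf $V_\Z\otimes_\Z\O_{Y^\df}$ is algebraic, Theorem \ref{gaga}(2) immediately gives that each step of the filtration is algebraic, and exactness of $\Define$ gives that the associated graded pieces are algebraic and locally free. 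The subtlety, then, is entirely in identifying the ambient bundle.

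First I would reduce to the case $\Gamma$ torsion-free: passing to a finite \'etale cover of $Y$ coming from a neat subgroup $\Gamma'\subset\Gamma$ (possible since the local system is definable, hence has an associated monodromy representation, and neat subgroups are finite index), the claim descends by finite \'etale descent as in the discussion after Corollary \ref{etequiv}. So assume $\Gamma$ is neat. Next, by Theorem \ref{hodgebetter} applied to the definable period map $\phi\colon Y^\df\to\Gamma\backslash\Omega$ classifying the variation, we factor $\phi=\iota\circ f^\df$ with $f\colon Y\to P$ dominant and $\iota\colon P^\df\hookrightarrow\Gamma\backslash\Omega$ a closed immersion onto a definable period image. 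The universal filtered bundle on $\Gamma\backslash\Omega$ restricts via $\iota$ to a definable coherent (locally split) filtered bundle on $P^\df$, and $F^\bullet_{Y^\df}=f^{\df *}$ of this. So it is enough to algebraize the filtered Hodge bundle on the definable period image $P$; the general case then follows by applying the algebraic pullback functor $f^*$ and using that definabilization commutes with pullback.

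Thus I would reduce to: $Y=P$ is a definable period image, $\iota\colon P^\df\hookrightarrow\Gamma\backslash\Omega$ closed, and we must show the restriction to $P^\df$ of the universal (definable, coherent, locally split) filtered bundle is algebraic. Here the ambient sheaf $V_\Z\otimes_\Z\O_{P^\df}$ is not obviously algebraic — $V_\Z$ has transcendental monodromy and, as Example \ref{countereg} warns, not every definable coherent sheaf is algebraic. The way around this is to work on a log smooth compactification: choose (by Nagata and resolution, passing to a resolution $X\to Y$ if needed, using that $Y\to P$ is dominant) a smooth $Y$ with log smooth compactification $\bar Y$, $D=\bar Y\setminus Y$ a normal crossings divisor, and with unipotent monodromy at infinity (after a further finite base change). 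On such a $Y$, as recalled in the text preceding the theorem, $F^\bullet_{Y^\df}$ \emph{is} the definabilization of the Deligne canonical extension $F^\bullet_{\bar Y}$ restricted to $Y$ — this is the hard analytic input, and it is exactly the content of the parenthetical remark in the statement of the paper, so I may invoke it. Then $F^\bullet_Y := F^\bullet_{\bar Y}|_Y$ is algebraic by construction, definabilizes to $F^\bullet_{Y^\df}$, and finally one descends along $Y\to P$ (resp. along the resolution $X\to Y$): since $f$ is dominant and proper on a modification, the algebraic bundle on the resolution together with its definabilization agreeing with the pullback of $F^\bullet_{Y^\df}$ forces, via Corollary \ref{mapdescent} and the faithful/exactness of $\Define$, the original $F^\bullet_{Y^\df}$ on $P$ to be algebraic. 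The main obstacle is precisely this last descent bookkeeping through resolutions and finite covers: one must check the various algebraic models glue compatibly and that the definabilization of the glued algebraic object is canonically $F^\bullet_{Y^\df}$, which is where the uniqueness clause (faithfulness of $\Define$) does the essential work.
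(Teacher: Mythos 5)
Your proposal has a genuine circularity: you invoke Theorem \ref{hodgebetter} to reduce to the case of a definable period image $P$, but in the paper Theorem \ref{hodgebetter} is proved \emph{after}, and by explicit appeal to, the proof of Theorem \ref{hodgealg}. The only period-image algebraization available at this stage is Corollary \ref{hodge}, which applies only to reduced $Y$. Since Theorem \ref{hodgealg} is stated for possibly non-reduced algebraic spaces and the non-reduced case is exactly where new input is needed (Example \ref{needdef} shows the definability hypothesis is essential there), this reduction cannot get off the ground.

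More substantively, you treat the smooth, unipotent-monodromy-at-infinity case as a known black box, citing the parenthetical remark preceding the theorem. That remark addresses the \emph{reduced} smooth case, where classical Schmid/Deligne canonical extension theory plus ordinary GAGA apply. But the paper's actual reduction is to a (possibly non-reduced) $X$ whose \emph{reduced} space is log smooth, and the core technical content of the proof is Lemma \ref{deligne}: one must show that $F^\bullet_{X^\df}$ extends as a filtered vector bundle over a canonical, \emph{a priori unknown}, nilpotent thickening $\widetilde X$ of the compactification, and that this thickening is algebraic. This requires a local construction on polydisks of a definable structure sheaf $\mathcal T \subset j_*j^*\mathcal R$ generated by pullbacks of functions from $\check\Omega$, controlled by coherent subsheaves $E \subset i_*\O_X$ so that Theorem \ref{gaga} applies. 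Your proposal never confronts this step. Finally, the descent bookkeeping in the paper is not via Corollary \ref{mapdescent} (a statement about descending \emph{morphisms} of definable analytic spaces, not coherent sheaves): rather, one embeds $F^\bullet_{Y''^\df}$ into $f_*(F^\bullet_X)^\df$ for the dominant proper $f:X\to Y''$ and invokes Theorem \ref{gaga}, then pushes out against an inductive algebraization on a thick nilpotent neighborhood $Z$ of the non-isomorphism locus. Your outline gestures at these ideas but identifies the wrong mechanism, and the missing Lemma \ref{deligne} is the essential gap.
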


\begin{proof}
For a positive integer $N$, let $Y_N$ denote an irreducible component of the finite \'etale cover of $Y$ trivializing $N$-torsion in the local system $V_\Z$. The monodromy of $Y_N$ is then a subset of $I_{\dim V}+N \cdot M_{\dim V}(\Z)$. For $N\geq 3$, the eigenvalues of such an element cannot be roots of unity except for $1$, since if $(\epsilon-1)/N$ is integral and $\epsilon$ is a root of unity, we must have $N\leq 3$. It follows that for the pullback variation on $Y_N$, the monodromy at infinity, which is a priori only quasi-unipotent thanks to a well-known result of Borel \cite[Lemma 4.5]{Schmid}, must in fact be unipotent.
As $F_{Y^\df}^\bullet$ embeds in $f^\df_*F_{Y_N^\df}^\bullet$, by Theorem \ref{gaga} we may assume that the monodromy at infinity is unipotent.  

Let $Y_0$ be the reduced space of $Y$. Then $Y_0$ can be resolved by successive blow-ups, and performing the same blow-ups on $Y$ we obtain $X\to Y$ whose reduced space $X_0$ is smooth.  By taking some compactification and again blowing up to resolve the reduced boundary, we obtain a compactification $\bar X$ of $X$ whose reduced space is log smooth.

\begin{lemma}\label{deligne}  Let $\bar X$ be a proper algebraic space, and $D$ a closed subspace such that the reduced spaces $(\bar X_0,D_0)$ are a log smooth pair, and such that $X=\bar X\smallsetminus D$ has a definable variation with unipotent monodromy at infinity.  There is a unique map $f:\widetilde X\ra \bar X$ which is an isomorphism on reductions and over $X$, and minimal with respect to the following property:  $F^\bullet_{X^\df}$ extends as a filtered vector bundle to $\widetilde X^\df$ and restricts to the Schmid extension on the reduced space $(\bar X_0)^\df$.  Moreover, $F^\bullet_{X^\df}$ is algebraic, $F^\bullet_{X^\df}\cong (F^\bullet_X)^\df$, and the restriction of $F^\bullet_{X}$ to $X_0$ agrees with the canonical algebraic structure on $F^\bullet_{X_0}$. \end{lemma}
\begin{proof}  
The space $\bar X_0^\df$ admits a definable cover by polydisks $P=\Delta^n$ such that $X_0^\df$ is locally $P^*=(\Delta^*)^m\times \Delta^{n-m}$.  Let $\mathcal{R}$ be the restriction of the definable structure sheaf of $\bar X^\df$ to $P$.  Since an analytic space is Stein if and only if its reduction is Stein,  $(P^\an, \mathcal{R}^\an)$ is a Stein space, and so we may and do choose lifts $t_k$ of the coordinate functions $z_k$ on the reduction (by possibly shrinking further, these lifts are also definable). Note that a surjective exponential map $\mathcal{R}\ra \mathcal{R}^{\times}$ is still well defined with kernel $\Z^n$. 

Let  $q_k$ be a choice
of logarithm of $t_k$ for each $k$, definable on vertical strips, and  $N_1,\dots,N_m$ the nilpotent monodromy logarithms. We have a definable map $\phi:(\Delta^*)^m\times \Delta^{n-m} \ra \Gamma\backslash \Omega$, and so $\psi=\exp(-\sum q_kN_k)\phi $ lifts definably to $\check{\Omega}$. Thanks to Schmid's nilpotent orbit theorem \cite[Theorem 4.12]{Schmid}), the map on the reduction extends to $P$.

Let $i:X\to\bar X$ and $j:P^*\to P$ be the inclusions, and consider the sheaf $j_*j^*\mathcal{R}$ as a sheaf of rings on $P$. We have a pullback map of sheaves of rings $\psi^{-1}\O_{\check\Omega^
\df}\ra j_*j^*\mathcal{R}$, and we take $\mathcal{T}$ to be the subsheaf of rings of $j_*j^*\mathcal{R}$ generated by its image and $\mathcal{R}$.  We first claim that $(P,\mathcal{T})$ has the structure of a definable complex analytic space.  Consider the pullback $f\in j_*j^*\mathcal{R}(P)$ of an algebraic coordinate on $\check{\Omega}$.  As $(P^\an,\mathcal{R}^\an)$ is Stein, we may assume the reduction $f_0$ of $f$ lifts to a definable section $\tilde f$ of $\mathcal{R}$ (after shrinking $P$), and as $f-\tilde f$ is nilpotent, $f$ satisfies a monic polynomial.  Thus, $\mathcal{R}[f]$ is a definable coherent $\mathcal{R}$-module, and it follows that $(P,\mathcal{R}[f])$ is a definable analytic subspace of $(P,\mathcal{R})\times \C$.  Adjoining the pullbacks of all algebraic coordinates we conclude that $(P,\mathcal{T})$ is a definable complex analytic space. 

The subsheaf $\mathcal{T}\subset j_*j^*\mathcal{R}$ is uniquely determined as the ``minimal thickening" of $X$ such that $\psi$ extends, so we globally obtain a well-defined definable complex analytic space $\widetilde{\mathcal{X}}=(\bar X_0,\mathcal{T})$.  By ordinary GAGA, $(\widetilde{\mathcal{X}})^\an$ is the analytification of an algebraic space $\widetilde{X}$.  As $(\widetilde{\mathcal{X}})^\an$ is proper it admits a unique $\R_\an$-definable structure, and so we must in fact have $\widetilde{\mathcal{X}}=(\widetilde{X})^\df$.

We can pull back the Hodge filtration on $\check{\Omega}$ to get a definable filtered vector bundle $F^\bullet_{\widetilde {\mathcal{X}}}$ on $\widetilde {\mathcal{X}}$ extending $F^\bullet_{X^\df}$, and $\widetilde {\mathcal{X}}$ is locally determined as the minimal such extension of $X$ over $\bar X$. The gluing follows because the filtration on $\check{\Omega}$ is invariant under $\G(\C)$.  Once again by ordinary GAGA, $(F^\bullet_{\widetilde {\mathcal{X}}})^\an$ has a unique algebraic structure $F^\bullet_{\widetilde X}$ which must agree with the definable structure, $F^\bullet_{\widetilde{ \mathcal{X}}}\cong (F^\bullet_{\widetilde X})^\df$.  The construction is evidently functorial with respect to morphisms $\bar X\to\bar X'$ which are isomorphisms on reductions, and for $\bar X$ reduced agrees with the construction of the canonical algebraic structure, whence the last claim.  
\end{proof} 
Note that $X\to Y$ may not be dominant, but its image $Y''$ is isomorphic to $Y$ on a dense open set $U$.  Let $Z$ be a sufficiently thick nilpotent neighborhood of the complement of $U$ and $A=Y''\times_Y Z$.   Then $Y$ is naturally the pushout
\[\xymatrix{
A\ar[d]\ar[r]& Y''\ar[d]\\
Z\ar[r]&Y
}\]  
As $f:X\to Y''$ is proper dominant, $F^\bullet_{Y''^\df}$ embeds in $f_*(F^\bullet_{X})^\df$, so it is the definabilization of some algebraic $F^\bullet_{Y''}$ by Theorem \ref{gaga}. Since $Z$ has smaller dimension than $Y$, by induction $F^\bullet_{Z^\df}=(F^\bullet_{Z})^\df$ is algebraic, and $F^\bullet_{Y^\df}$ is the definabilization of the pushout of $F_{Y''}^\bullet$ and $F_{Z}^\bullet$.
\end{proof}

\begin{proof}[Proof of Theorem \ref{hodgebetter}]  Let $X$ be an algebraic space and $\phi:X^\df\to\Gamma\backslash\Omega$ a definable period map.  The proof of the previous theorem implies we can produce a proper $X'\to X$ such that the definable period map of $X'$ has unipotent monodromy at infinity and $X'\to X$ is dominant on some dense open set $U$ of $X$.  Moreover, we get a partial compactification $\bar X'$ which admits a definable proper period map $\bar\phi:\bar X'^\df\to\Gamma\backslash\Omega$ restricting to that of $X'$.  Applying Theorem \ref{defpropmap} to $\bar X'$, we obtain $\bar X'\to Y'$ (proper) dominant and $Y'^\df\to \Gamma\backslash\Omega$ a closed immersion.

Let $X''$ be the image of $X'$ in $X$, and let $W$ be a sufficiently thick nilpotent neighborhood of the complement of $U$ such that $X$ is the pushout of $W$ and $X''$.  By induction we may apply Theorem \ref{defpropmap} to $W$ to obtain a dominant $W\to Z$ and a closed immersion $Z^\df\to \Gamma\backslash\Omega$.  The sought for $Y$ is then the pushout of $Z$ and $Y'$ (which exists by \cite[\href{https://stacks.math.columbia.edu/tag/07VX}{Tag 07VX}]{stacks-project}). 
\end{proof}

Thanks to Lemma \ref{the universal Hodge filtration is definable}, every definable period map yields a definable variation by pulling back\footnote{Strictly speaking, pulling back from the stack.  Alternatively, one can take a definable cover by simply-connected opens, lift to $\Omega$, pull back and glue.}, and we conclude this subsection with a converse.
\begin{prop}\label{vartomap}Let $Y$ be an algebraic space.  An analytic period map $\phi:Y^\an\to(\Gamma\backslash\Omega)^\an$ associated to a definable variation is definable.
\end{prop}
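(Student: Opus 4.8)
The plan is to reduce the definability of $\phi$ to the definability theorem for period maps on reduced algebraic spaces, \cite[Theorem 1.3]{BKT}, producing the needed definable local lifts out of the Hodge filtration so as to absorb any non-reduced structure of $Y$. Since definability may be checked on a finite definable open cover, I would first use o-minimal cell decomposition (as in the proof of Lemma \ref{propercover}) to choose a finite definable open cover $\{U_i\}$ of $Y^\df$ over which $V_\Z$ is trivialized and over which the analytic map $\phi|_{U_i^\an}$ lifts to the period domain $\Omega^\an$; refining, I may assume in addition that $\mathcal F^\bullet|_{U_i}$ is generated by finitely many definable holomorphic sections. One may also reduce to the case that $\Gamma$ is neat: passing to a normal neat finite-index $\Gamma'$ and to the corresponding finite \'etale cover $Y'\to Y$ with its pulled-back definable variation, the general case follows from the neat case by composing with the finite-group quotient $\Gamma'\backslash\Omega\to\Gamma\backslash\Omega$ (a morphism of definable analytic spaces by Proposition \ref{finitequotient}) and descending along $Y'^\df\to Y^\df$ by Corollary \ref{mapdescent}.

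On each $U_i$ the analytic lift $\psi_i\colon U_i^\an\to\Omega^\an$ is the map classifying the Hodge flag $\mathcal F^\bullet|_{U_i}$ of the trivialized bundle $V_\Z\otimes\O_{U_i}$. The compact dual $\check\Omega$ is a flag variety --- a closed subvariety of a product of Grassmannians cut out by the $Q$-orthogonality relations --- and Grassmannians represent the flag functor in a way compatible with definabilization: they are covered by definable affine charts on which a locally split coherent subsheaf of a trivial bundle is the graph of a matrix of definable holomorphic functions. Consequently, the \emph{definable} coherent locally split filtration $\mathcal F^\bullet|_{U_i}$ is precisely the datum of a morphism of definable analytic spaces $U_i^\df\to\check\Omega^\df$ whose analytification is $\psi_i$; fiberwise polarization places its image inside $\Omega$, a semialgebraic open subset of $\check\Omega$. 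Hence each $\psi_i$ is a morphism of definable analytic spaces $U_i^\df\to\Omega^\df$. This step makes no reference to the reduced structure of $Y$, and this is exactly where the definability hypothesis on the variation is used.

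It remains to descend to $\Gamma\backslash\Omega$ and glue. By the construction of \cite[Theorem 1.1]{BKT}, the definable structure on $\Gamma\backslash\Omega$ is the one for which the quotient $p\colon\Omega\to\Gamma\backslash\Omega$ restricts to a definable map on a definable fundamental set $\mathfrak F$ (with $\Gamma\mathfrak F=\Omega$ and finitely many $\gamma$ with $\gamma\mathfrak F\cap\mathfrak F\neq\emptyset$), and a map into $\Gamma\backslash\Omega$ is a morphism of definable analytic spaces exactly when, on a finite definable cover, it lifts to morphisms into $\Omega$ each with image in a single $\Gamma$-translate of $\mathfrak F$. Since $\check\Omega$ is reduced, $\psi_i(U_i)=\psi_i\!\left((U_i)^\reduced\right)$ as a subset of $\Omega$; moreover $\bigl((U_i)^\reduced\bigr)^\df\to\Omega^\df$ is a local lift of the period map of the reduced variation on $Y^\reduced$, which --- after resolving $Y^\reduced$ and descending via Corollary \ref{mapdescent} --- is definable into $\Gamma\backslash\Omega$ by \cite[Theorem 1.3]{BKT}. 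Therefore $\psi_i(U_i)$ meets only finitely many translates $\gamma_1\mathfrak F,\dots,\gamma_{N_i}\mathfrak F$, so $\{\psi_i^{-1}(\gamma_k\mathfrak F)\}_k$ is a finite definable cover of $U_i$ on each piece of which $\gamma_k^{-1}\circ\psi_i$ is a definable lift into $\mathfrak F$; hence $p\circ\psi_i$ is a morphism of definable analytic spaces. Finally, on each overlap $U_i\cap U_j$ the lifts $\psi_i$ and $\psi_j$ differ by a locally constant element of $\Gamma$, which takes finitely many values because $U_i\cap U_j$ has finitely many connected components; so the $p\circ\psi_i$ agree on overlaps and glue to a morphism of definable analytic spaces $Y^\df\to\Gamma\backslash\Omega$ whose analytification is $\phi$.

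The main obstacle is the finiteness assertion in the last step: that the image of a local lift meets only finitely many $\Gamma$-translates of the fundamental set. This is not a formal consequence of o-minimality --- a definable map from a definable open subset of an algebraic space to $\Omega$ need not have this property --- but is precisely the Hodge-theoretic content of \cite[Theorem 1.3]{BKT}, resting on the asymptotic analysis of variations of Hodge structure. Everything else is bookkeeping: the flag functor furnishes the definable local lifts and handles the non-reduced structure, while the finiteness properties of the o-minimal category make the gluing go through.
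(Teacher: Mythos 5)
Your proof is correct and rests on the same two key ideas as the paper's: identifying the Hodge filtration with a morphism to the flag variety $\check\Omega$ (so that definability of $\mathcal F^\bullet$ is equivalent to definability of the local lift, for arbitrary $Y$ including non-reduced), and invoking \cite[Theorem 1.3]{BKT} on the reduced space to get finiteness of the number of $\Gamma$-translates of the fundamental set that the image of a lift can meet. The organization differs in a worthwhile way. The paper first performs a global reduction to the case that $Y$ has \emph{smooth} reduced space, via a proper modification $X\to Y$, an induction on a nilpotent thickening $Z$ of the complement of a dense open, and a pushout of $X'$ and $Z$ (the same pattern as in the proof of Theorem \ref{hodgealg}); only then does it apply the flag-variety observation and the fundamental set $\Xi$. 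You instead apply the flag-variety observation directly to the given $Y$, which produces definable morphisms $U_i^\df\to\check\Omega^\df$ with no hypothesis on the singularities of $Y^\reduced$, and you only resolve $Y^\reduced$ to establish definability of the reduced period map (and hence the finiteness of fundamental-domain overlaps), descending by Corollary \ref{mapdescent}. This cleanly separates the role of the non-reduced structure (handled entirely by the flag functor) from the role of Hodge-theoretic tameness (handled by BKT on a smooth resolution), and avoids the induction-with-pushout step of the paper's argument. The one point worth being slightly more explicit about is the finiteness claim: you need that the chosen analytic lift $\psi_i^\reduced$ on $(U_i)^\reduced$ differs from the definable lift furnished by the definability of $\phi^\reduced$ only by a locally constant element of $\Gamma$, and that the relevant definable set has finitely many connected components; this is a routine o-minimal argument, but it is the precise mechanism converting definability of $\phi^\reduced$ into the assertion that $\psi_i(U_i)$ meets finitely many translates of $\mathfrak F$.
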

\begin{proof}  Again we may produce a proper $X\to Y$ such that the pull back of the variation to $X$ has unipotent monodromy at infinity, has smooth reduced space, and for which $X\to Y$ is dominant on a dense open set $U$ of $Y$. Let $X'$ be the image of $X$ in $Y$, and let $Z$ be a sufficiently thick nilpotent thickening of the complement of $U$ such that $Y$ is the pushout of $X'$ and $Z$.  By induction we may assume the claim for $Z$.  It will be enough to show the claim for $X$, for then by Proposition \ref{prop mapdescent} we have it for $X'$, hence for the disjoint union of $X'$ and $Z$ and finally for $Y$ by applying again Proposition \ref{prop mapdescent} (note that the map from the disjoint union to the pushout is dominant, see for example \cite[\href{https://stacks.math.columbia.edu/tag/07VX}{Tag 07VX}]{stacks-project}).

Therefore, replacing $Y$ with $X$, we may assume $Y$ has smooth reduced space $Y^\reduced$.  From \cite{BKT}, there is a definable fundamental set $\Xi$ for $\Gamma$ such that the quotient map $\Xi\to\Gamma\backslash\Omega$ realizes $\Gamma\backslash\Omega$ as a definable complex analytic space as the quotient of $\Xi$ by a closed definable equivalence relation.  As above, the reduced period map $\phi^\reduced:(Y^\reduced)^\an\to (\Gamma\backslash\Omega)^\an$ is definable, so there is a definable open cover $\mathcal{Y}_i$ of $Y^\df$ such that we can choose lifts $\mathcal{Y}_i\to\Xi$ which are definable on reduced spaces.  But $\check{\Omega}$ is a flag variety and maps $\mathcal{Y}_i\to \check{\Omega}$ are clearly definable if and only if $\mathcal{F}^\bullet|_{\mathcal{Y}_i}$ is definable, and this implies $\mathcal{Y}_i\to\Xi$ is definable.
\end{proof}

Thus a definable variation on $Y$ is equivalent to a definable period map.  

\begin{cor}\label{comingfromgeometry}Let $Y$ be an algebraic space.  A period map associated to an algebraic subquotient of a variation $R^kf_*\Z$ for a smooth projective family $f:X\to Y$ is definable.
\end{cor}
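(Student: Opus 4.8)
The plan is to reduce Corollary \ref{comingfromgeometry} to Proposition \ref{vartomap}, which says that a period map associated to a \emph{definable} variation is definable. Thus the entire content of the corollary is to check that a variation arising as an algebraic subquotient of $R^kf_*\Z$ for a smooth projective family $f:X\to Y$ is definable in the sense of the definition preceding Theorem \ref{hodgealg}, i.e. that the local system $V_\Z$ is definable on $Y^\df$ (automatic, by o-minimal cell decomposition, as remarked in the text) and, more substantively, that the Hodge filtration $\mathcal{F}^\bullet$ of $V_\Z\otimes_\Z\O_{Y^\df}$ is a \emph{definable} coherent subsheaf of $V_\Z\otimes_\Z\O_{Y^\df}$.

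First I would treat the tautological case $V_\Z = R^kf_*\Z$ itself. Here the Hodge bundle $F^\bullet$ is the analytification of an algebraic filtered bundle on $Y$ by standard Hodge theory (the Hodge filtration on relative de Rham cohomology is algebraic, by Deligne, when $Y$ is smooth; in general one resolves as in the proof of Theorem \ref{hodgealg} and descends). Hence $F^\bullet_{Y^\df}$, being the definabilization of an algebraic filtered bundle, is in particular definable coherent, and the comparison with $V_\Z\otimes\O_{Y^\df}$ is the definabilization of the algebraic comparison isomorphism $R^kf_*\Z\otimes\O_Y\cong R^kf_*\Omega^\bullet_{X/Y}$ (GAGA plus the fact that the local system is definable). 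So $(R^kf_*\Z, F^\bullet, Q)$ is a definable variation.

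Next, for an algebraic subquotient: suppose $W_\Z\subset V_\Z$ (or a quotient) is a sub-local-system cut out by a flat algebraic subbundle $W\subset V_\Z\otimes\O_Y$. Then $W^\df$ is a definable coherent subsheaf of $(V_\Z\otimes\O_Y)^\df = V_\Z\otimes\O_{Y^\df}$, and the induced filtration $\mathcal{F}^\bullet\cap W^\df$ (resp.\ the image filtration on the quotient) is definable coherent since definable coherent sheaves are closed under intersections, kernels and images (Lemma \ref{basiccoherence} and the surrounding machinery). This exhibits the subquotient variation as a definable variation, and then Proposition \ref{vartomap} applies to conclude that the associated period map $Y^\an\to(\Gamma\backslash\Omega)^\an$ is definable.

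The main obstacle, such as it is, is purely bookkeeping: making precise what ``algebraic subquotient of a variation'' means and verifying that the Hodge filtration on $R^kf_*\Z$ is algebraic in the possibly non-reduced, non-smooth setting allowed for $Y$. For the latter one reduces, exactly as in the proof of Theorem \ref{hodgealg}, by resolving $Y^\reduced$, passing to a log smooth compactification with unipotent monodromy after a level cover, invoking ordinary GAGA together with Theorem \ref{gaga} to descend along the resulting proper dominant maps, and handling the non-reduced directions by the pushout argument used there. Once algebraicity of $\mathcal{F}^\bullet$ is in hand, definability is immediate from the definabilization functor and the corollary follows from Proposition \ref{vartomap}.
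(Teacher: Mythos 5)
Your argument is correct and follows the paper's one-line reasoning exactly: the filtered Hodge bundle is algebraic, hence definable, so the variation is definable and Proposition \ref{vartomap} applies. The only caveat is that the last paragraph over-engineers the algebraicity step — since $f$ is smooth projective, $R^k f_*\Omega^\bullet_{X/Y}$ with its Hodge filtration is already algebraic over any base $Y$ (non-reduced or not) by construction, so there is no need to resolve, compactify, or rerun the argument of Theorem \ref{hodgealg}.
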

\begin{proof}In this case the filtered Hodge bundle is algebraic. 
\end{proof}

\subsection{The Griffiths bundle}
Let as before $\Omega$ be a pure polarized period domain with generic Mumford--Tate group $\G$ and $\Gamma\subset \G(\Q)$ an arithmetic lattice. The Hodge filtration $F^\bullet$ and the Griffiths line bundle $L:=\bigotimes_i\det F^i$ exist universally on $\Omega$. Moreover, $L$ descends to $\Gamma\backslash\Omega$, but only as a $\Q$-bundle in general due to the possible torsion in $\Gamma$. For any algebraic space $Y$ with a definable map $Y^\df\to\Gamma\backslash\Omega$ we denote by $L_{Y^\df}$ the pullback of the Griffiths $\Q$-bundle.
\begin{lemma}Let $Y$ be a definable period image.  Then $L_{Y^\df}$ is the definabilization of a (unique) algebraic $\Q$-bundle $L_Y$.
\end{lemma}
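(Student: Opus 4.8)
The plan is to deduce this directly from Theorem \ref{hodgealg} together with Theorem \ref{gaga}, using the fact that the Griffiths bundle is built out of determinants of the Hodge filtration. First I would recall that $Y$ being a definable period image means we have a closed immersion $\iota:Y^\df\to\Gamma\backslash\Omega$. The Griffiths $\Q$-bundle on $\Gamma\backslash\Omega$ is by definition $L = \bigotimes_i \det F^i$, so its pullback satisfies $L_{Y^\df} = \bigotimes_i \det F^i_{Y^\df}$, a tensor product of rational powers of determinants of the definable Hodge bundle on $Y$. The subtlety is that the universal filtered object $(V_\Z,\mathcal F^\bullet,Q)$ only exists on $\Gamma\backslash\Omega$ when $\Gamma$ is torsion-free, so in general we must first pass to a neat (in particular torsion-free) normal subgroup.

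The key steps, in order, would be: (i) Choose a neat normal subgroup $\Gamma'\subset\Gamma$ with $G=\Gamma/\Gamma'$ finite, and let $Y'\to Y$ be the corresponding finite (indeed, after base change, étale) cover, so that the pulled-back period map lands in $\Gamma'\backslash\Omega$ and the variation $(V_\Z,\mathcal F^\bullet,Q)$ exists on $\Gamma'\backslash\Omega$. Pulling back along $Y'^\df\to\Gamma'\backslash\Omega$ gives a definable variation on $Y'$ in the sense of the definition above. (ii) Apply Theorem \ref{hodgealg} to conclude that the filtered Hodge bundle $F^\bullet_{Y'^\df}$ is the definabilization of a unique algebraic filtered bundle $F^\bullet_{Y'}$. (iii) Form $L_{Y'} := \bigotimes_i (\det F^i_{Y'})^{\otimes q}$ for a suitable integer power clearing denominators (the Griffiths bundle being a priori only a $\Q$-bundle), so $L_{Y'}$ is an honest algebraic line bundle on $Y'$ whose definabilization is (a power of) $L_{Y'^\df}$; thus $L_{Y'^\df}$ is algebraic as a $\Q$-bundle.

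(iv) Descend from $Y'$ to $Y$. Since $L_{Y^\df}$ pulls back to $L_{Y'^\df}$ along the étale cover $Y'^\df\to Y^\df$, and $L_{Y'^\df}$ is the definabilization of an algebraic $\Q$-bundle carrying a descent datum for the cover $Y'\to Y$ (the descent datum being algebraic since it is itself a map determined on the universal object, and definable coherent subsheaves/isomorphisms of algebraic sheaves are algebraic by Theorem \ref{gaga}), the bundle $L_{Y^\df}$ is the definabilization of the descended algebraic $\Q$-bundle $L_Y$. Uniqueness follows from full faithfulness of $\Define$ in Theorem \ref{gaga}: two algebraic $\Q$-bundles with isomorphic definabilizations are isomorphic, and the isomorphism is algebraic. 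The main obstacle I anticipate is step (iv): one must check that the natural $G$-equivariant (or $Y'\times_Y Y'$-descent) structure on $L_{Y'}$ coming from Hodge theory is itself algebraic rather than merely definable; this is exactly the kind of statement Theorem \ref{gaga} is designed to provide, since the descent isomorphism is a definable isomorphism between algebraic sheaves, but one should make sure the compatibility (cocycle) condition descends as well — again automatic by full faithfulness of $\Define$.
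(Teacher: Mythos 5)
Your route is genuinely different from the paper's, and it has a gap at its very first substantive step. The paper's proof is short: by definition of a definable period image there is an algebraic space $X$ with a definable period map factoring as $\iota\circ f^\df$ with $f:X\to Y$ dominant; after thickening $Y$ (as in the proof of Theorem~\ref{hodgealg}) one may assume $f$ proper; then Theorem~\ref{hodgealg} is applied \emph{to $X$} — which manifestly carries a definable variation, since it carries a definable period map — to get an algebraic $L_X$; and finally $L_{Y^\df}$ embeds into $(f_*L_X)^\df$ because $f$ is proper dominant, so Theorem~\ref{gaga} (closure under subobjects) gives the algebraicity of $L_{Y^\df}$.

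The gap in your proposal is step~(i): you assert that pulling back the universal filtered object along $Y'^\df\to\Gamma'\backslash\Omega$ gives a \emph{definable variation} on $Y'$, and then in step~(ii) you apply Theorem~\ref{hodgealg} to $Y'$. But the definition of a definable variation requires Griffiths transversality, and you have not shown that the closed immersion $\iota':Y'^\df\to\Gamma'\backslash\Omega$ satisfies it. This is not automatic: the paper itself emphasizes that the universal filtered object on $\Gamma'\backslash\Omega$ is \emph{not} a variation precisely because it fails Griffiths transversality, and a ``definable period image'' is \emph{defined} merely as an algebraic space with a closed immersion arising from Theorem~\ref{hodgebetter}; nothing in that definition guarantees transversality on $Y'$. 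Transversality does hold after pullback to $X$, but you cannot descend it to $Y'$ just from dominance of $f:X\to Y'$, because the comparison map $f^*\Omega^1_{Y'}\to\Omega^1_X$ can have a nonzero kernel when $Y'$ is singular or non-reduced (torsion differentials are killed), so a nonzero obstruction class on $Y'$ could pull back to zero on $X$. The paper avoids this entirely by applying Theorem~\ref{hodgealg} to $X$ rather than to the (a priori only) period image.

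There is also a secondary issue in step~(iv): $Y'\to Y$ need not be \'etale (the map $\Gamma'\backslash\Omega\to\Gamma\backslash\Omega$ ramifies where stabilizers are nontrivial, and the period image is not assumed to avoid that locus), so descent along a finite \'etale cover is not directly available. This one is fixable — once one knows $L_{Y'}$ is algebraic, one can, exactly as in the paper, embed $L_{Y^\df}$ into $(g_*L_{Y'})^\df$ for $g:Y'\to Y$ finite dominant and invoke Theorem~\ref{gaga} — but the step~(i) gap is the substantive one, and repairing it leads you back to the paper's strategy of working on the source $X$ rather than on a cover of the image.
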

\begin{proof}By definition there is an algebraic space $X$ with a definable period map factoring through $Y^\df$ such that $f:X\to Y$ is dominant.  By a similar argument as in the proof of Theorem \ref{hodgealg}, by possibly thickening $Y$ we may assume $f$ is proper.  By Theorem \ref{hodgealg} the Griffiths bundle on $X$ is the definabilization of an algebraic bundle $L_{X}$. Let $m$ be a positive integer such that $L_{Y}^m$ is a bona fide line bundle. As $L_{Y^\df}^m$ embeds in $f_*(L_X^m)^\df$, it follows from Theorem \ref{gaga} that $L_{Y^\df}^m$ is the definabilization of a (unique) algebraic line bundle.
\end{proof}

\subsection{Quasi-projectivity of period images} \label{subsectionquasiproj}

Let $Y$ be a definable period image in  $\Gamma\backslash\Omega$.  From the last subsection, we know that the Griffiths $\Q$-bundle $L_Y$ is algebraic.  In this subsection we apply Theorem \ref{qproj gen} to prove the second part of Theorem \ref{maingriffiths}.

 \begin{defn}\label{vanishing schmid} Assume $Y$ is a definable period image. For $Y$ reduced, we say a section $s$ of $L^m_Y$ \emph{vanishes at the boundary} if the following condition holds:  for some period map $X^\df\to\Gamma\backslash\Omega$ factoring through $Y$ such that $X\to Y$ is generically finite, $X$ is smooth, and the variation on $X$ has unipotent monodromy at infinity, $s$ pulls backs to a section of $L_{\bar X}^m(-D)$ where $(\bar X,D)$ is a log smooth compactification of $X$ and $L_{\bar X}$ is the Schmid extension \cite{Schmid} of $L_X$ to the Deligne canonical extension \cite{DeligneHodge} of the ambient flat vector bundle.  We let $\Gamma_{van}(Y,L^m_Y)\subset \Gamma(Y,L^m_Y)$ denote the linear subspace of sections vanishing at the boundary, which is finite-dimensional as $\Gamma_{van}(Y,L^m_Y)$ injects into $ \Gamma(\bar X,L_{\bar X}^m)$.

  \end{defn}
   Note that as in Definition \ref{vanishingsect}, the condition is independent of $(\bar X,D)$:  any two $X,X'$ satisfying the conditions can be dominated by a third $X''$, and for the resulting map $f:(\bar X'',D'')\to(\bar X,D)$ of log smooth pairs we naturally have an isomorphism $f^*L_{\bar X}\to L_{\bar X''}$.  Once again, $s$ vanishes at the boundary if and only if $s^\reduced$ does, and the ring $\bigoplus_n \Gamma_{van}(Y,L_Y^n) $ is integrally closed in
$\bigoplus_n \Gamma(Y,L_Y^n) $.

We are now in a position to state the main result of this section:

\begin{thm}\label{qproj}
Let $Y$ be a definable period image.  Then the Griffiths $\Q$-bundle $L_Y$ is ample on $Y$. Moreover, sections of some positive power $L^n_Y$ which vanish at the boundary realize $Y$ as a quasi-projective scheme.
\end{thm}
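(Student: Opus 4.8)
The plan is to reduce the ampleness statement to a positivity statement about the Griffiths bundle on a smooth compactification, then to deduce both ampleness and the projective embedding by sections vanishing at the boundary.

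First, I would reduce to the reduced case. Since ampleness of $L_Y$ is equivalent to ampleness of $L_{Y^\reduced}$, and since the definition of sections vanishing at the boundary is insensitive to nilpotents, I may assume $Y$ is a reduced definable period image, hence a variety. By definition there is a smooth algebraic space (in fact, after a resolution, a smooth quasi-projective variety) $X$ with a proper period map $\phi\colon X^\df \to \Gamma\backslash\Omega$ factoring through $Y$ via a proper dominant map $f\colon X\to Y$, with the variation on $X$ having unipotent monodromy at infinity, and a log smooth compactification $\bar X \supset X$ with boundary $D$. By Theorem \ref{hodgealg} the Griffiths bundle $L_{\bar X}$ is algebraic on $\bar X$.

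The key analytic input is the metric positivity of the Griffiths line bundle: the Hodge metric on $L_{\bar X}$ has curvature that is semipositive everywhere and strictly positive in Griffiths-transverse directions, and by the work of Griffiths--Schmid / the Griffiths conjecture circle the canonical extension $L_{\bar X}$ carries a singular metric whose curvature current is positive with at worst logarithmic singularities along $D$. I would use this to show that $L_{\bar X}(-\epsilon D)$ (interpreting this $\Q$-divisorially, for small rational $\epsilon>0$) is big and nef on $\bar X$: nefness because the curvature is semipositive away from $D$ and the metric degenerates mildly, bigness because the volume is positive — the curvature is strictly positive on the open dense locus where $\phi$ is an immersion (which it is generically, since the period map has discrete fibers onto its image after the Stein-type factorization through $Y$). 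Concretely, $f$ factors through a generically finite map onto $Y$, so pulling back an ample-in-the-interior form gives the needed strict positivity on a dense open, hence $L_{\bar X}-\epsilon D$ has maximal Iitaka/numerical dimension, i.e. is big and nef. Therefore, for $m$ sufficiently divisible, $H^0(\bar X, L_{\bar X}^m(-mD))$ grows like $m^{\dim \bar X}$ and the sections separate points and tangent vectors on a dense open subset of $X$; these are exactly the pullbacks of elements of $\Gamma_{van}(Y, L_Y^m)$.

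To conclude, I would push this down to $Y$. Since $f\colon X\to Y$ is proper and dominant with $f_*\O_X = \O_Y$ (after the Stein factorization is absorbed), sections of $L_{\bar X}^m(-mD)$ that are pulled back from $Y$ are precisely $\Gamma_{van}(Y, L_Y^m)$, and the rational map $\bar X \dashrightarrow \PP(\Gamma_{van}(Y,L_Y^m)^\vee)$ factors through $Y$. The big-and-nef statement upstairs shows this map is generically finite onto its image; to upgrade to an honest closed embedding of $Y$ I would argue that the normalization of the image is $Y$ (using that $\phi$ is injective on $Y$ by construction of the definable period image, Theorem \ref{defpropmap}), and then that for $n$ large the linear system $\Gamma_{van}(Y,L_Y^n)$ is in fact very ample on $Y$ itself — here one uses that $L_Y$ restricted to any complete curve in $Y$ has positive degree (coming from the strict positivity of the Hodge metric along Griffiths-transverse tangent directions, which are the only directions a period map can move), so $L_Y$ is ample by the Nakai--Moishezon or Seshadri criterion on the (possibly singular, but proper-over-the-base) variety $Y$. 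The main obstacle is the last point: transferring the metric positivity, which lives naturally on the smooth compactification $\bar X$, into genuine \emph{algebraic} ampleness on the possibly badly singular space $Y$ — one cannot directly put a smooth positive metric on $L_Y$, so the argument must route through bigness-and-nefness on $\bar X$ plus the behavior of $L_Y$ on curves, and carefully check that sections vanishing at the boundary are enough to separate points and tangents on all of $Y$ (not just on the interior where $\phi$ is an immersion), which requires controlling the finitely many "exceptional" strata via the inductive structure already present in the construction of $Y$.
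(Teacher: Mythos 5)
There is a genuine gap. You correctly identify the analytic input---the positivity of the Griffiths bundle on a log smooth resolution $(\bar X, D)$ via the Hodge metric, which is the content of Lemma~\ref{nef and big}---and you correctly identify the real difficulty: sections vanishing at the boundary must separate points on the (singular, non-proper) space $Y$, not just on the interior of $\bar X$ where the metric is smooth. But the route you sketch for closing this gap does not work, and the mechanism the proof actually needs is missing.

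The decisive problem is your final step. You propose to deduce ampleness of $L_Y$ from the positivity of $\deg(L_Y|_C)$ for complete curves $C\subset Y$ via ``Nakai--Moishezon or Seshadri.'' But $Y$ is not proper, so Nakai--Moishezon does not apply; and even if $Y$ were proper, positivity on curves is strictly weaker than the Nakai criterion (one needs positivity on subvarieties of every dimension). The whole difficulty of the conjecture is that $\Gamma\backslash\Omega$ is not algebraic, so $Y$ has no ambient projective space a priori: one cannot \emph{start} from a numerical ampleness criterion, one must \emph{produce} the sections. What actually does the work in the paper is an induction on $\dim Y$ whose engine is: (i) a descent lemma (Lemma~\ref{descent}) isolating a closed subspace $S\subset Y$ such that a section of $f^*L_Y$ on $X$ descends to $Y$ iff it does so on $T=f^{-1}(S)$; (ii) Fujita vanishing for the nef bundle $L_{\bar X}$ to extend sections from an effective divisor $\bar E\supset T$ to all of $\bar X$, while keeping the vanishing along $D$; and (iii) the inductive hypothesis applied to the \emph{period image of $E'=\bar E\cap X$}, which has smaller dimension, to actually produce the required vanishing sections on the exceptional strata. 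None of this appears in your outline; you acknowledge that ``controlling the finitely many exceptional strata via the inductive structure'' is required, but that acknowledgment is not an argument, and it is exactly where the proof's content lies. Two further points: the reduction to neat $\Gamma$ (via the norm map $\Nm:\Gamma(Y',L^k_{Y'})\to\Gamma(Y,L^{k\ell}_Y)$) is needed before one can pass to unipotent monodromy and Deligne extensions, and your reduction to the reduced case is too quick---the paper has to run a Cech cohomology argument (\cite[II.4.5.14]{EGA2}) to lift vanishing sections through a square-zero thickening while preserving the vanishing-at-boundary condition, precisely because the claim being propagated is about separating zero-dimensional subschemes, not merely ampleness.
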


In the proof of the theorem, the positivity of the Griffiths bundle will be deduced from the special case of variations over smooth bases, where we have the following:

\begin{lemma}\label{nef and big}
Let $X$ be the complement of a normal crossing divisor $D$ in a compact K\"ahler manifold $\bar{X}$. Consider a polarized real variation of Hodge structure $\mathbb{V} = (V_\R,F^\bullet, Q)$ over $X$ with unipotent monodromies around $D$ and let $L_{\bar X}$ be as defined above. Then $L_{\bar X}$ is a nef line bundle. Moreover, $L_{\bar X}$ is big if and only if the associated period map is generically immersive.
\end{lemma}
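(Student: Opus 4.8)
The plan is to put the Hodge metric on the Griffiths line bundle, compute its curvature on $X$, control its degeneration along $D$, and then feed this into the metric characterizations of nefness and bigness on a compact Kähler manifold. First I would equip the Griffiths line bundle $L$ over $X$ with the Hermitian metric $h$ induced by the polarization. By Griffiths' classical curvature computation for Hodge bundles, the Chern form $c_1(L,h)$ is the pullback under the (locally defined, multivalued) period map $\phi$ of the natural invariant $(1,1)$-form $\omega$ on the period domain --- the one which is positive definite on horizontal tangent directions. Since $\phi$ is holomorphic and horizontal, $c_1(L,h)=\phi^*\omega$ is a \emph{semipositive} $(1,1)$-form on $X$ whose kernel at a point $x$ is exactly $\ker d\phi_x$. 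Consequently, writing $n=\dim X$, the form $(\phi^*\omega)^{n}$ is a positive measure on a dense Zariski open subset of $X$ if and only if $\phi$ is generically immersive.

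Next I would control $h$ near $D$. Because the local monodromies around the components of $D$ are unipotent, $L_{\bar X}$ is the Deligne canonical extension --- the extension on which the connection has logarithmic poles with nilpotent residues --- and by the $\SL_2$-orbit theorem of Schmid together with the norm estimates of Cattani--Kaplan--Schmid, the Hodge metric $h$ is a \emph{good metric} on $L_{\bar X}$ in the sense of Mumford: it has at worst logarithmic growth along $D$, uniformly with its derivatives. From this I would extract two consequences. First, $\Theta_h(L)$ extends across $D$ to a closed positive $(1,1)$-current $T$ lying in the class $c_1(L_{\bar X})$ and with identically vanishing Lelong numbers. Second, by Mumford's proportionality theorem in the non-compact case the Chern forms of $h$ compute the Chern numbers of $L_{\bar X}$; in particular
\[
\int_{\bar X} c_1(L_{\bar X})^{\,n}\;=\;\int_X c_1(L,h)^{\,n}\;=\;\int_X (\phi^*\omega)^{\,n}\;\ge\;0 .
\]

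From the first consequence, $L_{\bar X}$ is nef: by Demailly's regularization theorem, since the maximal Lelong number of $T$ vanishes, for every $\varepsilon>0$ the class $c_1(L_{\bar X})$ contains a smooth representative bounded below by $-\varepsilon\,\omega_{\bar X}$ for a fixed Kähler form $\omega_{\bar X}$ on $\bar X$, which is exactly the definition of nefness. (Alternatively one could invoke the nefness of canonical extensions of Griffiths bundles already established in the literature.) Finally, for a nef line bundle on a compact Kähler manifold bigness is equivalent to strict positivity of the top self-intersection number, and by the displayed identity this number equals $\int_X (\phi^*\omega)^{n}$; by the first paragraph it is strictly positive precisely when $\phi$ is generically immersive. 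Hence $L_{\bar X}$ is big if and only if the period map is generically immersive.

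The step I expect to be the real obstacle is the boundary analysis --- verifying that the Hodge metric on the canonical extension of the Griffiths line bundle is a good metric, i.e.\ deriving the precise logarithmic bounds on $h$ and its derivatives from the $\SL_2$-orbit theorem and the CKS norm estimates. By contrast, the curvature semipositivity on $X$ is standard Griffiths theory, and the implications ``positive current with zero Lelong numbers $\Rightarrow$ nef'' and ``nef $+$ positive top self-intersection $\Rightarrow$ big'' are standard pluripotential theory on compact Kähler manifolds.
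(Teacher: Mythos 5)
Your proposal follows essentially the same route as the paper: equip the Griffiths bundle with the Hodge metric, use Schmid-type norm estimates to show the curvature extends as a closed positive current with zero Lelong numbers on $\bar X$ (hence nefness), reduce bigness to positivity of the top self-intersection via Boucksom, compute that self-intersection as $\int_X c_1(L,h)^{\dim X}$ (the paper cites Koll\'ar where you invoke Mumford proportionality, but this is the same point), and finally identify strict positivity of the Chern form with immersivity of the period map via Griffiths' curvature formula. The only cosmetic differences are which references you invoke for the boundary estimates and for the intersection-number computation; the key steps and their order are identical.
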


\begin{proof}  
Letting $r_p = \mathrm{rank}(F^p)$, observe that the Griffiths bundle of $\mathbb{V}$ appears as the lowest nonzero piece in the Hodge filtration of the auxiliary polarized real variation of Hodge structure $ \mathbb{V}^\prime := \otimes_{p \in \Z} \bigwedge^{r_p} \mathbb{V} $, and $L_{\bar X}$ is the Schmid extension of the latter. Therefore the canonical metric $h$ on $L_X$ induced by the polarization $Q$ has non-negative curvature and extends as a singular metric on $L_{\bar X}$ with zero Lelong numbers, see \cite[Theorem 1.1]{Fujino-Fujisawa} or \cite{Brunebarbe}. Therefore the line bundle $L_{\bar X}$ is nef thanks to \cite[Corollary 6.4]{Demailly}. In particular, $L_{\bar X}$ is big if and only if $c_1(L_{\bar X})^{\dim X} >0$, cf. \cite[Theorem 1.2]{Boucksom}. 
Denoting by $C_1(L_X,h)$ the Chern form of the hermitian line bundle $(L_X,h)$, it follows by applying \cite[Theorem 5.1]{Kollar} to the auxiliary variation $ \mathbb{V}^\prime$  that the integral $\int_X{(C_1(L_X,h))^{\dim X}}$ is convergent and that we have the equality:
\[ c_1(L_{\bar X})^ {\dim X} = \int_X{(C_1(L_X,h))^{\dim X}} . \] 
We conclude using that the real $(1,1)$-form $C_1(L_X,h)$ is strictly positive at a point $x \in X$ if and only if the period map is immersive at $x$, cf. \cite[Proposition 7.15]{G2}.
\end{proof}

\begin{proof}[Proof of Theorem \ref{qproj}] 
We first reduce to the case that $\Gamma$ is neat.  Take $\Gamma'\subset\Gamma$ be a normal, neat subgroup of finite index $\ell$, with quotient $G$.  Let $Y$ be the period image of $X$, and $Y'$ the period image of the level cover of $X'$ in $\Gamma'\backslash\Omega$.  Then we have a surjective, dominant, finite map $\pi:Y'\ra Y$, and a group action $G$ on $Y'$ such that 
$\pi$ is $G$-invariant. For any section $\sigma$ of $L_{Y'}^k$ we claim that $\Nm(\sigma):=\prod_{g\in G}g\sigma$ descends
to $Y$ as a section of $L_Y^{k\ell}$. It is enough to work on stalks, so we may assume $L_Y$ and $L_{Y'}$ are trivial.  Let $y\in Y$, $R=\O^{\an}_{Y,y}$ and $S=\O^{\an}_{Y', \pi^{-1}(y)}$. For $f\in S$, we have that
$f$ lifts to a section $f_U$ on some ($G$-invariant) open neighborhood $U$ of $\pi^{-1}(y)$. Now 
$\Nm(f_U)$ is in the image of $\O^G_{(\Gamma'\backslash\Omega)^\an} = \O_{(\Gamma\backslash\Omega)^\an}$ and thus has an image $r\in R$, whose image in
$S$ is therefore $\Nm(f)$. 
We therefore have norm maps \[\Nm:\Gamma(Y',L^k_{Y'})\ra\Gamma(Y,L^{k\ell}_Y)\] for each $k$.  Clearly the norm of a vanishing section in the sense of Definition \ref{vanishing schmid} is a vanishing section.  Thus, if vanishing sections of $Y'$ yield an embedding of $Y'$ as a quasi-projective scheme, then by taking norms and applying Proposition \ref{embedding criterion} the same will be true of $Y$. 

We therefore assume $
\Gamma$ is neat, and in particular that the restriction of the variation to any subvariety of $Y$ has unipotent monodromy at infinity.  By the above remarks and Lemma \ref{nef and big}, the Schmid extension $L_{\bar X}$ satisfies the conditions in Setting \ref{setup} and the two notions of vanishing sections in Definitions \ref{vanishingsect} and \ref{vanishing schmid} agree.  Therefore, the claim follows from Theorem \ref{qproj gen}.
\end{proof}

\subsection{An ampleness criterion for the Hodge bundle}\label{secthodge}
Often in applications the Hodge bundle (namely the determinant of the deepest piece of the Hodge filtration) is more accessible than the Griffiths bundle, and we prove in this section an ampleness criterion for the Hodge bundle.

Let $(V_\Z,F^\bullet, Q)$ be a pure polarized integral variation of Hodge structure on a (reduced) separated algebraic space $X$. Using that the induced connection on $V_{\O_X} := V_\Z \otimes_\Z \O_X$ satisfies Griffiths transversality, we get for every integer $p$ an induced $\O_X$-linear map of $\O_X$-modules $\psi_p: T_X \rightarrow \Hom_{\O_X}(F^p / F^{p+1} , F^{p-1} / F^p)$. 

\begin{thm}\label{An ampleness criterion}
Let $F^{n}$ be the lowest piece of the Hodge filtration, meaning that $F^{n} \neq 0$ but $F^{n+1} = 0$. Assume that for any germ of a curve $\phi : \Delta \rightarrow X$, the $\O_{\Delta}$-linear map of $\O_{\Delta}$-modules $\phi^\ast(\psi_n) : T_{\Delta} \rightarrow \Hom(\phi^\ast (F^n) , \phi^ \ast (F^{n-1} / F^n))$ is injective. Then the line bundle $\det (F^n)$ is ample on $X$.
\end{thm}

Observe that for $X$ smooth the condition in the theorem is satisfied when the $\O_X$-linear map of $\O_X$-modules $\psi_n : T_X \rightarrow \Hom(F^n , F^{n-1} / F^n)$ is injective with image a locally split $\O_X$-submodule of $\Hom(F^n , F^{n-1} / F^n)$. This last condition implies that the period map is immersive, but the converse is not true.\\

The proof of Theorem \ref{An ampleness criterion} is parallel to the proof of Theorem \ref{qproj} (replace Lemma \ref{nef and big} by the lemma below). Note that in fact the latter is a particular case of the former since one easily check that Griffiths line bundle is the lowest piece of the Hodge filtration of the auxiliary variation $\otimes_{p \in \Z} \wedge^{r_p} \mathbb{V}$ where $r_p = \rk F^p$.

\begin{lemma}

Let $X$ be a smooth algebraic variety, $X \subset \bar{X}$ a smooth compactification such that $\bar X - X = D$ is a normal crossing divisor. Let $ (V_{\R}, F^{\bullet}, Q)$ be a polarized real variation of Hodge structure over $X$ with unipotent monodromies around $D$. Let $F^n_{\bar X}$ be the Schmid extension of the lowest piece of the Hodge filtration. Then the  line bundle $\det (F^n_{\bar X})$ is nef. Moreover, $\det (F^n_{\bar X})$ is big if and only if
the $\O_X$-linear map of $\O_X$-modules $ \psi_n : T_X \rightarrow \Hom_{\O_X}(F^n , F^{n-1} / F^n)$ is injective.
\end{lemma}

\begin{proof}
The polarization $Q$ permits to define a canonical positive definite Hermitian metric $h$ on $\det(F^n)$. Denote by $C_1(\det(F^n),h)$ the Chern form of the hermitian line bundle $(\det(F^n),h)$. It follows from the computation of the curvature of the Hodge bundles (see \cite[Theorem 5.2]{G2} or \cite[Lemma 7.18]{Schmid}) that $C_1(\det(F^n),h)$ is a positive real $(1,1)$-form on $X$, and $C_1(\det(F^n),h)$ is strictly positive at a point $x \in X$ if and only if the $\O_X$-linear map of $\O_X$-modules $ \psi_n : T_X \rightarrow \Hom_{\O_X}(F^n , F^{n-1} / F^n)$ is injective at $x$. With this fact at hand, the rest of the proof is parallel to the proof of Lemma \ref{nef and big}.
\end{proof}

\section{Applications}\label{sectionapp}
We start by making some remarks related to the first two applications below.  We may more generally speak of period maps from a separated Deligne--Mumford stack $\mathcal{M}$ of finite type over $\C$ as follows.  We say a period map $\mathcal{M}^\an\to \Gamma\backslash \Omega$ consists of an \'etale atlas $U\to \mathcal{M}$ by an algebraic space and a period map $\phi:U^\an\to \Gamma\backslash\Omega$ for which the resulting two compositions $(U\times_{\mathcal{M}}U)^\an \rightrightarrows \Gamma\backslash \Omega$ are equal.  For example, for a smooth projective family $\pi:\mathcal{X}\to \mathcal{M}$, the local system $R^k\pi_*\Z$ will underly such a variation.  We say that the period map is either quasi-finite or $\R_{\an,\exp}$-definable if this is so for the period map on the atlas, and we say the image of the period map in $\Gamma\backslash\Omega$ is the image of the period map on the atlas (since any two atlases are the same up to an etale cover, these definitions are independent of the atlas).

Recall that the definability condition is again automatic if $\mathcal{M}$ is reduced \cite[Theorem 1.3]{BKT}, and is satisfied for all period maps arising from geometry, by Corollary \ref{comingfromgeometry}.

\subsection{Borel algebraicity}\label{sectborel}

The following is an analog of a theorem proven by Borel \cite[Theorem 3.1]{Borel} (see also \cite[Theorem 5.1]{Deligne}) for locally symmetric varieties:
\begin{cor}\label{borel}Let $\mathcal{M}$ be a separated Deligne--Mumford stack of finite type over $\C$ admitting a quasi-finite $\R_{\an,\exp}$-definable period map, and let $Z$ be a reduced algebraic space.  Then any analytic map $Z^\an\to \mathcal{M}^\an$ is algebraic. 
\end{cor}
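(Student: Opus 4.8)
The plan is to reduce the statement to the algebraicity of an analytic section of a quasi-finite morphism of algebraic spaces, using crucially that the variation pulled back to the \emph{reduced} space $Z$ is automatically definable.

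First I would reduce to the case that $\mathcal{M}$ is a reduced algebraic space $Y$ carrying a quasi-finite definable period map. Choosing a normal neat subgroup $\Gamma'\subset\Gamma$ of finite index, the fibre product $\mathcal{M}\times_{[\Gamma\backslash\Omega]}(\Gamma'\backslash\Omega)\to\mathcal{M}$ is a finite étale cover, and after passing to a further étale atlas the assertion descends (this is the level-cover manoeuvre indicated in the introduction). By Theorem~\ref{hodgebetter} applied to $Y$, the period map factors as $\iota_Y\circ f^\df$ with $f\colon Y\to W$ a dominant algebraic morphism --- necessarily quasi-finite, since the period map is --- and $\iota_Y\colon W^\df\hookrightarrow\Gamma\backslash\Omega$ a closed immersion.

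Now let $g\colon Z^\an\to Y^\an$ be an arbitrary analytic map. The composite $Z^\an\xrightarrow{g}Y^\an\xrightarrow{f^\an}W^\an\hookrightarrow(\Gamma\backslash\Omega)^\an$ is the period map of $g^{*}(V_\Z,F^\bullet,Q)$ on $Z$, hence is definable by \cite[Theorem~1.3]{BKT} since $Z$ is reduced. Applying Theorem~\ref{hodgebetter} to $Z$, and then Corollary~\ref{defchowsub} to identify the resulting closed definable period image inside $W^\df$ with an algebraic subspace, I would conclude that the map $\bar q\colon Z\to W$ underlying this composite is algebraic. Forming the algebraic fibre product $\mathcal{E}:=Y\times_W Z$ with projections $\pi_Y,\pi_Z$, the map $\pi_Z$ is quasi-finite (a base change of $f$), and $g$ determines an analytic section $\sigma=(g,\mathrm{id})\colon Z^\an\to\mathcal{E}^\an$ of $\pi_Z^\an$. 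Since $g=\pi_Y^\an\circ\sigma$, it now suffices to algebraize $\sigma$.

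The heart of the matter --- and the step I expect to be the main obstacle --- is therefore that an analytic section of a quasi-finite morphism of algebraic spaces is algebraic. Here I would use that $Z$ is reduced to factor $\sigma$ through $\mathcal{E}^\reduced$, invoke Zariski's main theorem to reduce to the case that $\pi_Z$ is finite, and note that in characteristic zero $\pi_Z$ is then finite étale over a dense open $Z_0\subset Z$ (it is surjective, having a section). Over $Z_0$, the definable étale-descent results of Section~\ref{sectiondefscheme} (Proposition~\ref{trivcover}, Lemma~\ref{propercover}) trivialize the cover on a finite definable open cover, and on each connected member $\sigma$ must coincide with one of the finitely many trivializing sheets; hence $\sigma(Z_0^\an)$ is a definable analytic subset of $\mathcal{E}^\df$. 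Since $\sigma$, being a section of a separated morphism, is a closed immersion, its image is the closure $\overline{\sigma(Z_0^\an)}$, which is again definable by o-minimality. Endowing it with its reduced structure (Lemma~\ref{reducedstructure}) and applying Corollary~\ref{defchowsub}, I realize $\sigma(Z^\an)$ as the definabilization of an algebraic subspace $\widetilde{\mathcal{E}}\subset\mathcal{E}$; as $\pi_Z|_{\widetilde{\mathcal{E}}}$ analytifies to an isomorphism it is one, and $g=\pi_Y\circ(\pi_Z|_{\widetilde{\mathcal{E}}})^{-1}$ is algebraic. The remaining delicate points are the reductions that make $\pi_Z$ generically finite étale, so that the definable descent of Section~\ref{sectiondefscheme} applies, and the bookkeeping in the initial reduction to an algebraic space.
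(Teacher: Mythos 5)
Your proposal follows essentially the same route as the paper's proof: pass to an \'etale atlas $U$ (the paper does not bother making $U$ reduced or passing to a level cover, but this is harmless), let $Y$ be the period image of $U$, observe that since $Z$ is reduced the composite $Z^\an\to Y^\an$ is a definable period map, and then use quasi-finiteness of $U\to Y$ to promote this to definability of $Z^\an\to U^\an$, concluding by Peterzil--Starchenko. The only real difference is one of exposition: the paper dispatches the lift across the quasi-finite map $U\to Y$ in a single sentence, whereas you repackage it (equivalently) as algebraizing the analytic section $\sigma$ of $U\times_Y Z\to Z$ and carry out the reduction to the finite \'etale case plus o-minimal trivialization in detail, which is indeed the step carrying the real content.
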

\begin{proof}  Let $U\to\mathcal{M}$ be a finite-type \'etale atlas.  It is enough to algebraize the base-change of the map $Z^\an\to \mathcal{M}^\an$ to $U$ along with the descent data, so we may assume $\mathcal{M}=U$.  Let $Y$ be the period image of the period map $U^\df\to\Gamma\backslash\Omega$.  The composition $Z^\an\to U^\an\to(\Gamma\backslash\Omega)^\an$ is a period map and thus by Corollary \ref{hodge} it follows that $Z^\an\to Y^\an$ is $\R_{\an,\exp}$-definable.  As $U\to Y$ is quasi-finite, $Z^\an\to U^\an$ is also $\R_{\an,\exp}$-definable, and therefore by Theorem \ref{defchow} algebraic.
\end{proof}
Applied to a separated Deligne--Mumford moduli stack of smooth polarized varieties with an infinitesimal Torelli theorem, for example, Corollary \ref{borel} implies that any analytic family of such varieties over (the analytification) of a reduced algebraic base $Z$ is in fact algebraic.
\begin{cor}  For $\mathcal{M}$ as above, if $\mathcal{M}$ is in addition reduced, then $\mathcal{M}^\an$ admits a unique algebraic structure.
\end{cor}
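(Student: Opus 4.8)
The plan is to reduce the entire statement to Corollary~\ref{borel}. Existence of an algebraic structure is immediate, since $\mathcal{M}$ itself is one; the content is uniqueness. So I would take two reduced separated Deligne--Mumford stacks $\mathcal{M}_1,\mathcal{M}_2$ of finite type over $\C$, each admitting a quasi-finite $\R_{\an,\exp}$-definable period map, together with an isomorphism $\Phi\colon\mathcal{M}_1^\an\xrightarrow{\sim}\mathcal{M}_2^\an$ of analytic stacks, and produce a unique algebraic isomorphism $\mathcal{M}_1\xrightarrow{\sim}\mathcal{M}_2$ analytifying to $\Phi$.

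First I would fix a finite-type \'etale atlas $U\to\mathcal{M}_1$ by an algebraic space; since $\mathcal{M}_1$ is reduced and the relevant structure maps are \'etale, the groupoid $R=U\times_{\mathcal{M}_1}U$ and the triple product $R\times_UR$ are again reduced algebraic spaces of finite type over $\C$. The composite $U^\an\to\mathcal{M}_1^\an\xrightarrow{\Phi}\mathcal{M}_2^\an$ is then an analytic map from a reduced algebraic space to $\mathcal{M}_2^\an$, hence the analytification of an algebraic map $q\colon U\to\mathcal{M}_2$ by Corollary~\ref{borel}; the same applies to the induced map $R\to\mathcal{M}_2^\an$, and to the $2$-isomorphism comparing the two composites $R\rightrightarrows U\xrightarrow{q}\mathcal{M}_2$. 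That $2$-isomorphism is a section over $R$ of the $\operatorname{Isom}$-space of the two maps; this $\operatorname{Isom}$-space is an algebraic space quasi-finite over $R$ (its structure map is a base change of the unramified diagonal of $\mathcal{M}_2$), so its reduction --- through which any section out of the reduced $R$ must factor --- is again a reduced algebraic space carrying a quasi-finite $\R_{\an,\exp}$-definable period map, and Corollary~\ref{borel} algebraizes the section as well. The cocycle identity over $R\times_UR$ is inherited from its analytic counterpart, so \'etale descent along $U\to\mathcal{M}_1$ yields an algebraic morphism $\Psi\colon\mathcal{M}_1\to\mathcal{M}_2$ with $\Psi^\an=\Phi$. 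Repeating the construction for $\Phi^{-1}$ (and an atlas of $\mathcal{M}_2$) gives an algebraic $\Psi'\colon\mathcal{M}_2\to\mathcal{M}_1$ with $\Psi'^\an=\Phi^{-1}$; since $\Psi'\circ\Psi$ and $\id_{\mathcal{M}_1}$ have the same analytification, they agree after pulling back along $U$ (two algebraic maps out of a reduced algebraic space with equal analytifications coincide, again by Corollary~\ref{borel}), so $\Psi,\Psi'$ are mutually inverse and $\Phi$ is algebraic.

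The main obstacle here is bookkeeping rather than mathematics: one must verify that \emph{every} piece of the analytic data --- the map on atlases, the descent morphism on $R$, the $2$-isomorphism, and the equalities witnessing that $\Psi,\Psi'$ are inverse --- is algebraic, and in each case this is an instance of Corollary~\ref{borel} applied to one of the reduced algebraic spaces $U$, $R$, $R\times_UR$ occurring in the presentation (using that a morphism out of a reduced scheme factors through the reduction of its target). There is no new Hodge-theoretic input beyond Corollary~\ref{borel}. Equivalently, one may phrase the argument as the assertion that analytification is fully faithful on the category of reduced separated Deligne--Mumford stacks of finite type over $\C$ admitting a quasi-finite $\R_{\an,\exp}$-definable period map, which is exactly what Corollary~\ref{borel} gives once atlases are unwound.
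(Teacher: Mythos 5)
Your proposal is correct and follows the route the paper clearly intends (the corollary is stated without proof as an immediate consequence of Corollary~\ref{borel}): unwind an \'etale atlas, apply Corollary~\ref{borel} to algebraize the map on $U$, on $R$, the $2$-isomorphism as a section of the (quasi-finite, unramified) $\operatorname{Isom}$-space, and the cocycle identity, then descend, and finally run the same argument for $\Phi^{-1}$. One small point worth making explicit: you set up the argument assuming the competing structure $\mathcal{M}_2$ already carries a quasi-finite $\R_{\an,\exp}$-definable period map, but the uniqueness claim should range over arbitrary reduced separated Deligne--Mumford stacks $\mathcal{M}_2$ with $\mathcal{M}_2^\an\cong\mathcal{M}^\an$; this is automatic because the period map transferred through $\Phi$ is quasi-finite (as $\Phi$ is an isomorphism) and definable since $\mathcal{M}_2$ is reduced, as the paper observes at the start of section~\ref{sectionapp} citing \cite[Theorem 1.3]{BKT}.
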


\subsection{Quasi-projectivity of moduli spaces}\label{sectcomplete}

Recall by a well-known result of Keel--Mori \cite{KM} that a separated Deligne--Mumford stack $\mathcal{M}$ of finite type over $\C$ admits a coarse moduli space $M$ which is a separated  algebraic space of finite type over $\C$.
\begin{cor}\label{coarse}Let $\mathcal{M}$ be a separated Deligne--Mumford stack of finite type over $\C$ admitting a quasi-finite $\R_{\an,\exp}$-definable period map.  Then the coarse moduli space of $\mathcal{M}$ is quasi-projective.
\end{cor}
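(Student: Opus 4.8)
I would descend the problem to the coarse space and produce an ample line bundle there. Recall first that by Keel--Mori \cite{KM} the coarse moduli space $M$ of $\mathcal{M}$ exists as a separated algebraic space of finite type over $\C$, so it suffices to exhibit an ample line bundle on $M$. The one genuine subtlety is that the period map of $\mathcal{M}$ lands most naturally in the \emph{stack} quotient $[\Gamma\backslash\Omega]$: composing with $[\Gamma\backslash\Omega]\to\Gamma\backslash\Omega$ and factoring through the coarse space need not give a locally liftable map, because an automorphism of an object of $\mathcal{M}$ acts on the associated period data through $\Gamma$, possibly nontrivially. The plan is to first kill this by passing to a neat level cover, then apply the results of the previous sections on that cover, and finally descend back to $M$.

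Concretely, I would fix a neat normal subgroup $\Gamma'\triangleleft\Gamma$ of finite index with quotient $G=\Gamma/\Gamma'$, and set $\mathcal{M}':=\mathcal{M}\times_{[\Gamma\backslash\Omega]}[\Gamma'\backslash\Omega]$, a $G$-torsor over $\mathcal{M}$ that is again a separated Deligne--Mumford stack of finite type over $\C$, and which comes with a period map to $[\Gamma'\backslash\Omega]=\Gamma'\backslash\Omega$. Because $\Gamma'$ is torsion-free, every (finite-order) automorphism of an object of $\mathcal{M}'$ acts trivially on the associated integral Hodge structure, so the analytic period map $\mathcal{M}'^\an\to(\Gamma'\backslash\Omega)^\an$ factors through the coarse space $M'$ of $\mathcal{M}'$ as a bona fide locally liftable map $\bar\phi':M'^\an\to(\Gamma'\backslash\Omega)^\an$ satisfying Griffiths transversality. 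By hypothesis the period map is quasi-finite and $\R_{\an,\exp}$-definable on an étale atlas of $\mathcal{M}'$, and since it is invariant under the automorphism groups this descends (using Proposition \ref{finitequotient}) to show $\bar\phi'$ is quasi-finite and definable. Thus $\bar\phi':M'^\df\to\Gamma'\backslash\Omega$ is a quasi-finite definable period map.

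Now Theorem \ref{hodgebetter} applies to $M'$: it factors as $M'\xrightarrow{g}Y'$ followed by a closed immersion $Y'^\df\to\Gamma'\backslash\Omega$, where $g$ is dominant — hence, since $\bar\phi'$ is quasi-finite, quasi-finite — onto a definable period image $Y'$. By Theorem \ref{qproj}, $Y'$ is a quasi-projective scheme and $L_{Y'}$ is ample. Since $g$ is a quasi-finite separated morphism of finite type onto a Noetherian scheme, Zariski's main theorem writes it as an open immersion $M'\hookrightarrow\overline{M'}$ followed by a finite morphism $\overline{M'}\to Y'$; then $\overline{M'}$ is a scheme, finite over the quasi-projective scheme $Y'$ and hence quasi-projective, and $M'$ is an open subscheme, so $M'$ is quasi-projective. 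Finally, since $\mathcal{M}'\to\mathcal{M}$ is a $G$-torsor, $G$ acts on $M'$ with $M=M'/G$; given an ample line bundle $L$ on $M'$, the $G$-equivariant line bundle $\bigotimes_{g\in G}g^*L$ descends, after possibly replacing it by a power, to a line bundle $\bar L$ on $M$, and $\bar L$ is ample because its pullback to $M'$ is ample and $M'\to M$ is finite surjective. Hence $M$ is quasi-projective.

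The part that requires genuine care, rather than routine bookkeeping, is the descent to the coarse space in the second paragraph: verifying that after the neat level cover the period map really does descend to $M'$ as a definable period map — i.e. that local liftability, Griffiths transversality, and $\R_{\an,\exp}$-definability all survive passing through the (now trivially acting) automorphism groups. Everything downstream is a direct application of Theorems \ref{hodgebetter} and \ref{qproj}, together with Zariski's main theorem and the standard descent of ampleness along finite surjective maps.
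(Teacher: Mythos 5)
Your overall strategy is sound and uses the same ingredients as the paper (neat level cover, Theorems \ref{hodgebetter} and \ref{qproj}, Zariski's main theorem, descent of ampleness along $M'\to M$), but you route the argument differently in a way that creates a genuine gap. The paper never asserts that the coarse space carries a definable period map; instead it applies Theorem \ref{hodgebetter} to the \'etale atlas $U$ (and the level cover $U'$) to get the algebraic period image $Y$, and then shows the \emph{algebraic} map $U\to Y$ descends to $M$ using only the universal property of the coarse moduli space. Concretely, on the level cover the variation on $U'$ is pulled back from $Y'$, so the two projections $U'\times_{\mathcal{M}'}U'\ras U'\to Y'$ agree and the map $U'\to Y'$ factors through $\mathcal{M}'$; quotienting by $G$ and using that a map of algebraic spaces out of $\mathcal{M}$ factors through $M$ gives a quasi-finite $M\to Y$, and ampleness of $L_Y$ plus Zariski's main theorem finishes. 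This avoids ever having to check that local liftability, Griffiths transversality, or definability survive the passage to the coarse space.

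Your proposal instead needs $\bar\phi':M'^\df\to\Gamma'\backslash\Omega$ to be a definable period map so that Theorem \ref{hodgebetter} can be applied to $M'$ directly, and you flag exactly this as the delicate step but do not carry it out. The danger is real. Local liftability and definability do descend: locally $\mathcal{M}'=[V/H]$ with $H$ finite acting on a scheme chart $V$, the $H$-invariant lift $V\to\Omega$ factors through $V/H$, and one checks (via Proposition \ref{finitequotient} and faithfulness of analytification) that the definable analytic quotient $V^\df/H$ agrees with $(V/H)^\df$. The problem is Griffiths transversality. Writing $\pi:V\to V/H$ and $\psi$ for the descended local lift, we know that $T_V\to \pi^*T_{V/H}\to \pi^*\psi^*\bigl(T\Omega/T^{-1,1}\bigr)$ vanishes, and want to conclude that $T_{V/H}\to \psi^*\bigl(T\Omega/T^{-1,1}\bigr)$ vanishes. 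But at points where $H$ has nontrivial stabilizer, $\pi$ is ramified and $T_V\to\pi^*T_{V/H}$ fails to be surjective, so this does not follow formally. When $M'$ is reduced one can still save the argument: the locus where $T_{M'}\to\psi^*(T\Omega/T^{-1,1})$ vanishes is closed, the quotient bundle is locally free hence the image subsheaf has no torsion, and the \'etale locus is dense — but the corollary is stated without a reducedness hypothesis on $\mathcal{M}$, and for non-reduced $M'$ you would need to say more (or restrict attention to the reduced case and separately handle nilpotents). This is the content of the step you identify as needing genuine care; as written it is a gap. Everything downstream of that step, including the Zariski argument and the norm/equivariant descent of the ample bundle from $M'$ to $M=M'/G$, is fine.
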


\begin{proof}
The Griffiths bundle exists on the coarse moduli space $M$ as a $\Q$-bundle by general results \cite[Lemma 2]{KV}.  Let $U\to\mathcal{M}$ be a finite-type \'etale atlas by an algebraic space, so that we have a definable period map $\phi:U^\df\to\Gamma\backslash\Omega$.  Let $Y$ be the period image.  We claim that the map $U\to Y$ factorizes through the coarse moduli space $M$ of $\mathcal{M}$.  Let $\mathcal{M'}\to\mathcal{M}$ and $U'\to U$ be the \'etale covers corresponding to a normal finite index neat $\Gamma'\subset\Gamma$ with quotient $G$.  Let $Y'$ be the period image of $U'$ in $\Gamma'\backslash\Omega$.  Then as the variation on $U'$ is pulled back from $Y'$, the map $U'\to Y'$ factorizes through $\mathcal{M'}$.  As $U=[G\backslash U']$ and $\mathcal{M}=[G\backslash \mathcal{M}']$, it follows that $U\to [G\backslash Y']$ factorizes through $\mathcal{M}$.  Therefore, the map $U\to[G\backslash Y']\to Y$ factorizes through $M$.

Thus we get a quasi-finite map $M\ra Y$.  By Theorem \ref{qproj}, $L_Y$ is ample, so we have an immersion $Y\to \PP^n$.  We then have a quasi-finite map $M\to \PP^n$, which by Zariski's main theorem factors as an open immersion and a finite map.  It follows that $L_M$ is ample. 
\end{proof}

\begin{remark}\label{rmk stacky}  The construction of $[G\backslash Y']$ as in the proof can be used to construct the algebraic image of a period map in the quotient stack $[\Gamma\backslash\Omega]$.  One could also develop the theory of definable complex analytic Deligne--Mumford stacks, although we have not pursued this level of generality.
\end{remark}

Corollary \ref{coarse} applies to any (separated finite-type) smooth Deligne--Mumford stack that is the moduli stack of smooth polarized varieties $X$ with an infinitesimal Torelli theorem.  By work of Viehweg \cite{Viehweg}, such results are known for varieties $X$ with a semi-ample canonical bundle, and so the case of Fano varieties is of particular interest. For concreteness, we deduce some new results about moduli spaces of complete intersections, on which previous work has been done for hypersurfaces by Mumford \cite{Mumford} and more generally by Benoist \cite{Benoist1,Benoist2}.

We fix a collection of integers $T= (d_1, \cdots, d_c ;n)$ with $n \geq 1$, $c \geq 1$ and $2 \leq d_1 \leq \cdots \leq d_c$. Recall that a complete intersection of type $T$
is a closed subscheme of codimension $c$ in $\mathbb{P}^{n+c}_{\C}$ which is the zero locus of $c$ homogeneous polynomials of degrees $d_1, \cdots, d_c$ respectively. 
Let $H$ be the Zariski-open subset of the Hilbert scheme of $\mathbb{P}^ {n+ c}_{\C}$ that parametrizes the smooth complete intersections of type $T$. Let $\mathcal{M}_T$ be the moduli stack of smooth complete intersections polarized by $\mathcal{O}(1)$, i.e. the quotient stack $[PGL^{n+c+1}(\C) \backslash H]$. 

When $T \neq (2;n)$ Benoist proved that $\mathcal{M}_T$ is a separated smooth Deligne-Mumford stack of finite type \cite[Theorem 1.6 and 1.7]{Benoist1}, and therefore has a coarse moduli space $M_T$. If in addition $d_1 = \cdots = d_c$ then $M_T$ is an affine scheme, \cite[Theorem 1.1.i)]{Benoist2}, while if $c>1$ and $d_2=\cdots=d_c$, $M_T$ is quasi-projective by \cite[Corollary 1.2]{Benoist2}.  Finally, for $T=(3;2)$, $M_T$ is quasi-projective by \cite{ACT}.

\begin{cor}\label{completeint}
For all $T\neq (2;n)$, the coarse moduli space $M_T$ is quasi-projective.
\end{cor}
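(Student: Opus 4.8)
The plan is to deduce this from Corollary~\ref{coarse} together with the infinitesimal Torelli theorem for complete intersections, disposing of the finitely many exceptional multidegrees by appealing to the results of Benoist and Allcock--Carlson--Toledo recalled above.

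First I would recall that, by \cite[Theorems 1.6 and 1.7]{Benoist1}, for $T\neq(2;n)$ the stack $\mathcal{M}_T$ is a separated smooth Deligne--Mumford stack of finite type over $\C$, so it admits a coarse moduli space $M_T$ by \cite{KM}. The universal family $\pi\colon\mathcal{X}_T\to\mathcal{M}_T$ carries the polarized integral variation of Hodge structure given by the primitive part of $R^n\pi_*\Z$; as this variation arises from geometry, the associated period map $\mathcal{M}_T\to[\Gamma\backslash\Omega]$ is $\R_{\an,\exp}$-definable by Corollary~\ref{comingfromgeometry}. By Corollary~\ref{coarse} it therefore suffices to show that this period map is quasi-finite.

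The next step is to observe that the period map is quasi-finite precisely when it is immersive, i.e.\ when for every $[X]\in\mathcal{M}_T$ its differential is injective. Under the Kodaira--Spencer isomorphism $T_{[X]}\mathcal{M}_T\cong H^1(X,T_X)$ this differential becomes the cup-product map
\[
H^1(X,T_X)\longrightarrow\bigoplus_{p+q=n}\Hom\!\left(H^{p,q}_{\mathrm{prim}}(X),\,H^{p-1,q+1}_{\mathrm{prim}}(X)\right),
\]
so what must be shown is exactly infinitesimal Torelli for $X$. Here I would invoke Flenner's theorem \cite{Flenner} (building on work of Griffiths, Peters and Usui): infinitesimal Torelli for the primitive middle cohomology holds for \emph{every} smooth complete intersection of type $T$, the only exceptions being $T=(2;n)$ (quadrics), $T=(3;2)$ (cubic surfaces) and $T=(2,2;n)$ (intersections of two quadrics), each of which has $c\le2$. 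For every $T$ outside this list the period map of $\mathcal{M}_T$ is then quasi-finite and definable, so Corollary~\ref{coarse} applies and $M_T$ is quasi-projective. For the three exceptional families: $T=(2;n)$ is excluded by hypothesis; for $T=(3;2)$ the space $M_T$ is quasi-projective by \cite{ACT}; and for $T=(2,2;n)$ one has $c=2>1$ with $d_2=\cdots=d_c$ holding vacuously, so $M_T$ is quasi-projective by \cite[Corollary 1.2]{Benoist2}. This covers all $T\neq(2;n)$.

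The substance of the argument is entirely in the cited infinitesimal Torelli theorem and its classical list of exceptions; granting that, everything else is a formal application of Corollary~\ref{coarse}. The one point that will require a little care is the Kodaira--Spencer identification $T_{[X]}\mathcal{M}_T\cong H^1(X,T_X)$---equivalently, the standard fact that, away from the exceptional multidegrees, every first-order deformation of a smooth complete intersection of dimension $n\ge2$ is again a complete intersection---so that injectivity of the cup-product map above genuinely translates into immersivity of the period map on $\mathcal{M}_T$.
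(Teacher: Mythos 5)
Your proposal is correct and follows essentially the same route as the paper: apply Corollary~\ref{coarse} together with Flenner's infinitesimal Torelli theorem to get quasi-finiteness of the period map for the non-exceptional multidegrees, then dispose of $T=(3;2)$ via Allcock--Carlson--Toledo and $T=(2,2;n)$ via Benoist. The paper is slightly sharper in recording that Flenner already covers $(2,2;n)$ with $n$ odd, but since you fall back on \cite[Corollary~1.2]{Benoist2} for all of $(2,2;n)$ this makes no difference; your extra remarks on the Kodaira--Spencer identification and on immersivity implying quasi-finiteness correctly fill in what the paper leaves implicit.
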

\begin{proof} This follows from Corollary \ref{coarse} and Flenner's infinitesimal Torelli theorem \cite[Theorem 3.1]{Flenner}, which applies for $T\neq (3;2)$ and $T\neq (2,2;n)$ for $n$ even---in particular, to all remaining cases.
\end{proof}

\subsection{A factorization result}\label{sectkahler}

We prove here a result which, intuitively, says that all interesting variations of Hodge structures on compact K\"ahler manifolds come from algebraic geometry. 

\begin{thm}
Let $X$ be a dense Zariski open subset of a compact K\"ahler manifold $\bar X$, and let $(V_\Z,\mathcal{F}^\bullet, Q)$ be a pure polarized integral variation of Hodge structure on $X$. Assume that the monodromy of $V_\Z$ is torsion-free (this is always achieved by going to a finite \'etale cover of $X$) and that $X$ is the biggest open subset of $\bar X$ on which $V_\Z$ extends.

Then there exist a proper surjective holomorphic map with connected fibres $\pi: X \rightarrow Y$ for a normal quasi-projective variety $Y$ such that $(V_\Z,\mathcal{F}^\bullet, Q)$ is the pull-back by $\pi$ of a polarized integral variation of Hodge structure on $Y$.
\end{thm}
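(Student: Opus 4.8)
The plan is to factor the period map of the variation through the nef reduction of the associated Griffiths bundle and then feed the resulting base into the theorems of Sections~\ref{sectionperiodmap} and~\ref{sectionquasiproj}. Write $\phi\colon X^\an\to\Gamma\backslash\Omega$ for the period map of $(V_\Z,\mathcal F^\bullet,Q)$, with $\Gamma\subset\G(\Q)$ the torsion-free image of the monodromy; enlarging $\Gamma$ to a neat arithmetic lattice and invoking Proposition~\ref{finitequotient}, we may assume $\Gamma\backslash\Omega$ is a definable analytic space. Passing to a finite \'etale cover of $X$ (extend it to a ramified cover of $\bar X$, then normalize and resolve so that $D=\bar X\smallsetminus X$ becomes a normal crossing divisor) reduces us to the case of unipotent monodromy at infinity; the descent of the final factorization back to $X$ is done at the very end by passing to the quotient by the finite deck group, all of the objects produced being canonical. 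I would argue by induction on $\dim X$.

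Let $L_{\bar X}$ be the Deligne canonical extension to $\bar X$ of the Griffiths $\Q$-bundle pulled back by $\phi$; by Lemma~\ref{nef and big} it is nef. Let $f\colon\bar X\dashrightarrow\bar Y$ be its nef reduction: an almost holomorphic fibration with connected fibres onto a normal compact complex variety $\bar Y$ that is bimeromorphic to a K\"ahler manifold, whose general fibre $F$ is a compact K\"ahler manifold with $L_{\bar X}|_F\equiv0$, and such that $L_{\bar X}$ is positive on curves through a general point of $\bar X$ with positive-dimensional image in $\bar Y$. First I would show $\phi$ is constant on $F$: otherwise, applying the induction hypothesis to the maximal extension locus of $V_\Z$ inside $F$ (a dense Zariski open of the compact K\"ahler manifold $F$), $\phi|_F$ would descend to a positive-dimensional normal quasi-projective variety on which, by Theorem~\ref{qproj}, the Griffiths bundle is ample, contradicting $L_{\bar X}|_F\equiv0$. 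Hence the connected components of the fibres of $\phi$ agree generically with the fibres of $f$ (on such a component $\phi$ is constant so $L_{\bar X}$ is trivial there; conversely by the previous step), and $\phi$ factors as $\phi=\psi\circ\pi$ where $\pi\colon X\to Y^\circ$ is a proper surjective holomorphic map with connected fibres onto a reduced separated complex space $Y^\circ$, bimeromorphic to $\bar Y$, and $\psi\colon Y^\circ\to\Gamma\backslash\Omega$ is a period map that is generically immersive (using the positivity property of the nef reduction). The essential point here — and the only place the maximality hypothesis on $X$ is used — is that $\pi$ is proper: one must rule out a component of a fibre of $\phi$ whose closure meets $D$, and such a component, being contracted by $\phi$, carries trivial monodromy (as $\Gamma$ is torsion-free), so $V_\Z$ would extend across the corresponding boundary points, contradicting maximality.

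Next I would show $Y^\circ$ is an algebraic space: its Griffiths bundle is the restriction of a line bundle $M_{\bar Y}$ on a resolution of $\bar Y$ which, by the argument of Lemma~\ref{nef and big} (valid on compact complex manifolds bimeromorphic to K\"ahler ones), is nef, and, since $\psi$ is generically immersive, big. Thus $\bar Y$ carries a big line bundle, hence is Moishezon, so $Y^\circ$ is the analytification of a reduced finite-type algebraic space. Now $\psi$ is a period map on an algebraic space, hence definable by \cite[Theorem~1.3]{BKT}; by Theorem~\ref{hodgebetter} it factors as $(Y^\circ)^\df\to W^\df\xrightarrow{\iota}\Gamma\backslash\Omega$ with $W$ an algebraic space, $\iota$ a closed immersion, and $Y^\circ\to W$ dominant, and since $\psi$ has finite fibres this map is quasi-finite; by Theorem~\ref{qproj} $L_W$ is ample, so $W$, and therefore $Y^\circ$ (quasi-finite over $W$, by Zariski's main theorem), is quasi-projective. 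Replacing $Y^\circ$ by the Stein factorization of $\pi$, we may assume in addition that $Y^\circ$ is normal and $\pi_*\O_X=\O_{Y^\circ}$, with $\pi$ still proper surjective with connected fibres.

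Finally the variation descends: as $\psi$ is a period map to $\Gamma\backslash\Omega$ with $\Gamma$ torsion-free, it has a monodromy representation $\bar\rho\colon\pi_1(Y^\circ)\to\Gamma$ with $\bar\rho\circ\pi_*$ the monodromy of $V_\Z$, so $V_\Z\cong\pi^*V'_\Z$ for the local system $V'_\Z$ with monodromy $\bar\rho$; together with the Hodge filtration pulled back along $\psi$ and the form $Q$ (flat, hence determined by $\bar\rho$), this exhibits a polarized integral variation $(V'_\Z,\mathcal F'^\bullet,Q')$ on the normal quasi-projective $Y^\circ$ whose pullback by $\pi$ is $(V_\Z,\mathcal F^\bullet,Q)$. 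Passing to the quotient by the finite deck group of the level cover undoes the reduction and completes the proof. The main obstacle is the construction of the proper factorization $\pi\colon X\to Y^\circ$ in the second paragraph — specifically its properness, where the maximality hypothesis must be converted into compactness of the connected components of the fibres of $\phi$; once that is in hand, the rest is a matter of combining \cite{BKT} with Theorems~\ref{hodgebetter} and~\ref{qproj}.
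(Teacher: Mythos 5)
Your plan reaches the same destination but travels a genuinely different road: where the paper takes the Stein factorization of the proper period map $\phi$ and then invokes Sommese's theorem to realise the normal base $Y$ (after modification) as a Zariski open in a compact K\"ahler manifold, you instead invoke the nef reduction of the Deligne extension $L_{\bar X}$ to produce the fibration $\bar X\dashrightarrow\bar Y$, and then match its fibres with those of $\phi$ by induction on dimension. Both routes end with the same Moishezon-plus-K\"ahler argument (via Lemma~\ref{nef and big}) and an appeal to the algebraization theorems, and the descent of the variation in both uses torsion-freeness of $\Gamma$ in the same way. The trade-off is that Sommese's modification theorem is exactly tailored to this situation and quoted verbatim in the paper, whereas the nef reduction theorem you invoke is proved in the literature for projective manifolds, and its validity for compact K\"ahler manifolds is not a quotable black box; using it also forces you to run an induction and to prove separately that the almost-holomorphic nef reduction restricts to an honest proper holomorphic map on $X$, which the Stein factorization of the proper map $\phi$ gives for free. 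Your heuristic for properness of $\pi$ (a fibre component meeting $D$ ``carries trivial monodromy, so $V_\Z$ extends'') is the right idea but compressed: to go from ``$\phi$ constant along a component of a fibre approaching $D$'' to ``local monodromy around $D$ trivial, hence $V_\Z$ extends'' one must invoke Schmid's nilpotent orbit theorem or Griffiths's extension theorem, which is precisely the content of the hypothesis in the paper that $\phi$ is already proper. In short: correct underlying ideas and a workable alternative strategy, but heavier machinery and a couple of steps that would need to be filled in carefully (the K\"ahler nef reduction and the properness argument) compared with the paper's leaner route through Sommese.
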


\begin{proof}
By hypothesis, the monodromy $\Gamma$ of $(V_\Z,\mathcal{F}^\bullet, Q)$ is torsion-free and the associated period map $\phi: X \rightarrow \Gamma\backslash\Omega$ is proper. We denote by $X \xrightarrow{\pi} Y \rightarrow \Gamma\backslash\Omega$ its Stein factorization, so that $Y$ is a normal analytic space and $\pi :X \rightarrow Y$ is surjective with connected fibres. Since $\Gamma$ is torsion-free, $(V_\Z,\mathcal{F}^\bullet, Q)$ descends to $Y$. To finish the proof, it remains to prove that $Y$, a priori only an analytic space, is in fact a quasi-projective variety. We cannot apply directly Theorem \ref{maingriffiths} since $X$ is not assumed to be algebraic. However one can proceed as follows. First observe that thanks to the following result of Sommese $Y$ admits a proper modification $Y^\prime \rightarrow Y$ such that $Y^\prime$ is a dense Zariski open subset of a compact K\"ahler manifold $\overline{Y^\prime}$. 

\begin{thm}[Sommese {\cite[Proposition III and Remark III-C]{Som2}}] \label{sommese thm}Let $X$ be a dense Zariski open subset in a compact K\"{a}hler manifold $\bar X$, $Y$ be a complex analytic space  and $\pi: X \to Y$ be a surjective proper holomorphic map with connected fibres. Then there exists $X^\prime$ (resp. $Y^\prime$) a dense Zariski open subset in a compact K\"{a}hler manifold $\bar X^\prime$ (resp. $\bar Y^\prime$) and a commutative diagram
\begin{center}
	
\begin{tikzcd}[row sep=scriptsize, column sep=scriptsize]
  & \bar X  &  & \bar X^\prime \arrow{dd}{\pi^\prime}  \arrow{ll}{\alpha^\prime}\\
X \arrow{dd}{\pi}	\arrow[hookrightarrow]{ur}{} &	   & X^\prime \arrow{ll}{\alpha} \arrow[hookrightarrow]{ur}{} \arrow{dd}{\pi^\prime_{|X^\prime}}	\\
    &  & & \bar Y^\prime    \\		
Y &     & Y^\prime \arrow{ll}{\beta}  \arrow[hookrightarrow]{ur}{}  \\
	\end{tikzcd}
\end{center}
where $\alpha : X^\prime \rightarrow X$ (resp. $\beta : Y^\prime \rightarrow Y$) are proper modifications and $\pi^\prime$, $\pi^\prime_{|X^\prime}$ are surjective proper maps with connected fibres.
\end{thm}

The composition $Y^\prime \rightarrow Y \rightarrow\Gamma \backslash \Omega$ endows $Y^\prime$ with a polarized integral variation of Hodge structure. 
Take $\Gamma'\subset\Gamma$ neat of finite index and let $Y'' \ra Y'$ be the base-change along $\Gamma'\backslash\Omega\to\Gamma\backslash\Omega$. If $\overline{Y''}$ denotes a compactification of $Y''$ whose boundary is a normal crossing divisor, the polarized integral variation of Hodge structure induced on $Y''$ has unipotent monodromy at infinity. Thanks to Lemma \ref{nef and big} the associated Griffiths line bundle $L_{\overline{Y''}}$ is big, hence $\overline{Y''}$ is Moishezon. It follows that the compact
K\"ahler manifold $\overline{Y'}$ is Moishezon, hence it is in fact projective algebraic. Since $Y^\prime \rightarrow Y $ is the Stein factorization of the composition $Y^\prime \rightarrow Y \rightarrow\Gamma \backslash \Omega$, it follows now from Theorem \ref{maingriffiths} and Riemann existence theorem
that $Y$ is quasi-projective.
\end{proof}

\end{document}